\theoremstyle{plain} 
\newtheorem{theorem}{Theorem}[section] 
\newtheorem{corollary}[theorem]{Corollary}
\newtheorem{proposition}[theorem]{Proposition}
\newtheorem{lemma}[theorem]{Lemma} 
\theoremstyle{definition} 
 \newtheorem{remark}[theorem]{Remark}
\newtheorem{nothing}[theorem]{}
\newcommand{\GsetA}{{}_G\operatorname{set}^A}
\newcommand{\Hom}{\operatorname{Hom}}
\newcommand{\kerlin}{\mathbb S_p}
\newcommand{\kerlint}{\mathbb S_2}
\newcommand{\ling}{\operatorname{Lin}_G}
\newcommand{\MonG}{\mathbb C B^{\mathbb C^\times}(G)}
\newcommand{\scp}{\mathbb S_p}
\newcommand{\ind}{\operatorname{Ind}}
\newcommand{\res}{\operatorname{Res}}
\newcommand{\defl}{\operatorname{Def}}
\newcommand{\infl}{\operatorname{Inf}}
\newcommand{\iso}{\operatorname{Iso}}
\newcommand{\out}{\operatorname{Out}}
\newcommand{\Aut}{\operatorname{Aut}}
\newcommand{\gl}{{\operatorname{GL}(2, p)}}
\title{The $p$-Biset Functor of Monomial Burnside Rings I: Composition Factors}
\author{İbrahim Kaan ASLAN$^1$ and Olcay COŞKUN$^2$}
\date{
$^1$ Indiana University, IN, USA \\ \texttt{ikaslan@iu.edu} \\ %
	$^2$ Center for Mathematical Research, ASOIU, Baku, Azerbaijan \\ 
   \texttt{olcay.coshkun@asoiu.edu.az}\\[2ex]%
}
\begin{document}
\maketitle

\begin{abstract}
We determine the complete list of composition factors, with multiplicities, of the monomial Burnside $p$-biset functor $\mathbb C B^{\mathbb{C}^\times}$ over the complex field $\mathbb K$ of characteristic 0. The method relies on a reduction to \emph{restriction kernels}, transforming the problem into the computation of certain $\mathbb C[\mathrm{Aut}(G)]$-modules attached to finite $p$-groups $G$. We describe these modules explicitly for all $G$ with non-trivial restriction kernel, and identify the corresponding simple functors. 
\end{abstract}

Keywords: monomial linearization; fibered biset functor; $p$-biset functor; composition factor.

\section{Introduction}\label{sec:intro}

The \emph{monomial Burnside ring} of a finite group \( G \), introduced by Dress \cite{Dress}, is the Grot\-hen\-dieck ring of the category of monomial \( G \)-sets. Similar to the role of permutation \( G \)-sets, monomial \( G \)-sets can be viewed as bases for \( \mathbb{C}G \)-modules by extending the \( G \)-action on a monomial \( G \)-set \( X \) to the vector space \( \mathbb{C}X \). The induced ring homomorphism 
\[
\ling \colon B^{\mathbb{C}^\times}(G) \to \mathcal{R}_{\mathbb{C}}(G),
\]
from the monomial Burnside ring \( B^{\mathbb{C}^\times}(G) \) to the representation ring \( \mathcal{R}_{\mathbb{C}}(G) \), is called the \emph{monomial linearization map}. This homomorphism is known to be surjective, as established by Brauer's Induction Theorem. Explicit sections were subsequently determined by Boltje \cite{Bo8}, Snaith \cite{Snaith}, and Symonds \cite{Sym}.

When the groups \( B^{\mathbb{C}^\times}(G) \) and \( \mathcal{R}_{\mathbb{C}}(G) \) are viewed as functors on suitable categories, the linearization map becomes a natural transformation between these functors. Using the Mackey functor framework, Boltje \cite{Bo8} determined a section for \( \ling \), referred to as \emph{canonical induction formula} for \( G \). Later, in \cite{BoCo}, Boltje and the second author extended this perspective by realizing \( \ling \) as a morphism between two \( \mathbb{C}^\times \)-fibered biset functors. They proved that \( \mathbb{C}\mathcal{R}_{\mathbb{C}} \) is a simple functor and that \( \mathbb{C}B^{\mathbb{C}^\times} \) serves as its projective cover. Additionally, Bouc \cite{Bouc0} demonstrated that \( \mathbb{C}\mathcal{R}_{\mathbb{C}} \) is a semisimple biset functor, while the fibered biset functor structure of \( \mathbb{C}B^{\mathbb{C}^\times} \) was determined in \cite{CoYi} for $p$-groups and in \cite{BoYi} generally.

For the rest, suppose $G$ is a $p$-group. The linearization map 
\[
\operatorname{Lin} \colon B(G) \to \mathcal{R}_{\mathbb{Q}}(G)
\]
from the Burnside ring \( B(G) \) to the rational representation ring \( \mathcal{R}_{\mathbb{Q}}(G) \) plays a crucial role in classifying endo-permutation modules. In \cite{BoTh}, Bouc and Th{\'e}venaz established the following short exact sequence of \( p \)-biset functors:
\[
\begin{tikzcd}
0 \arrow[r] & \mathbb{Q}D \arrow[r] & \mathbb{Q}B \arrow[r, "\operatorname{Lin}"] & \mathbb{Q}\mathcal{R}_{\mathbb{Q}} \arrow[r] & 0,
\end{tikzcd}
\]
where \( D(G) \) denotes the Dade group of a \( p \)-group \( G \). They further showed that both the image and kernel of \( \operatorname{Lin} \) are simple \( p \)-biset functors.

Similarly, in \cite{CoYi}, the second author and Yılmaz established a short exact sequence for the monomial linearization map. Specifically, they proved that the kernel of the monomial linearization map is the simple functor 
\[
\kerlin := S_{C_p, \{1\}, \{1\}, \mathbb{C}},
\]
yielding the exact sequence of fibered \( p \)-biset functors:
\[
\begin{tikzcd}
0 \arrow[r] & \kerlin \arrow[r] & \mathbb{C}B^{\mathbb{C}^\times} \arrow[r, "\operatorname{Lin}"] & \mathbb{C}\mathcal{R}_{\mathbb{C}} \arrow[r] & 0.
\end{tikzcd}
\]

Although \( \kerlin \) has not yet been explicitly identified with a known functor, its role as the kernel of the monomial linearization map makes it a significant object of study. In this paper, we analyze \( \kerlin \) as a \( p \)-biset functor and determine its composition factors explicitly. The general problem of determination of composition factors for biset functors is a challenging task. However, by applying \cite[Theorem 2.5]{BCK} (see Appendix D), we reduce this problem to determining the irreducible summands of the \emph{restriction kernels} of \( \kerlin \) at finite $p$-groups. Here, by a restriction kernel at a group $G$, we mean the submodule of all the elements in $\kerlin(G)$ which are annihilated by all morphisms mapping to a group of smaller order.

Our first main theorem identifies the \( p \)-groups for which the restriction kernel is nonzero:
\begin{theorem}\label{thm:main1}
    Let \( G \) be a finite \( p \)-group. The restriction kernel \( \mathcal{K}\kerlin(G) \) of \( \kerlin \) at \( G \) is nonzero if and only if \( G \) is either cyclic of order \( p^k \) (for some integer \( k \geq 1 \)) or is isomorphic to a direct product \( C_{p^k} \times C_p \) (for some integer \( k \geq 1 \)).
\end{theorem}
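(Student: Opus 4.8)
The plan is to deduce Theorem~\ref{thm:main1} from an explicit computation of $\mathcal{K}\kerlin(G)$ inside the monomial Burnside ring. Write the monomial basis of $B^{\mathbb{C}^\times}(G)$ as $\{[V,\phi]\}$, indexed by $G$-conjugacy classes of pairs $(V,\phi)$ with $V\le G$ and $\phi\in\Hom(V,\mathbb{C}^\times)$, so that $\ling[V,\phi]=\ind_V^G\mathbb{C}_\phi$ and $\kerlin(G)=\ker\ling$. Viewing $\kerlin$ as an ordinary $p$-biset functor, the morphisms that strictly lower the group order are generated by restrictions and deflations; using $\res^G_T=\res^H_T\res^G_H$ and the factorization of every deflation through the deflation by a central subgroup of order $p$, this reduces to
\[
\mathcal{K}\kerlin(G)=\ker\ling\cap\Bigl(\bigcap_{[G:H]=p}\ker\res^G_H\Bigr)\cap\Bigl(\bigcap_{\substack{N\trianglelefteq G\\ |N|=p}}\ker\defl^G_{G/N}\Bigr).
\]
Two observations drive the argument: since $\kerlin$ is a subfunctor of $\mathbb{C}B^{\mathbb{C}^\times}$, one has $\mathcal{K}\kerlin(G)=\ker\ling\cap\mathcal{K}\mathbb{C}B^{\mathbb{C}^\times}(G)$; and for $x\in\mathcal{K}\kerlin(G)$ one has $\res^G_Hx\in\mathcal{K}\kerlin(H)$ and $\defl^G_{G/N}x\in\mathcal{K}\kerlin(G/N)$, which sets up an induction on $|G|$.

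Next I would fix a concrete handle on $\ker\ling$. It is spanned by the elementary induction relations $\eta^G_{W,V,\psi}=\ind_W^G\bigl([V,\psi]-\sum_{\chi|_V=\psi}[W,\chi]\bigr)$ over $V\le W\le G$ with $[W:V]=p$ and $\psi\in\Hom(V,\mathbb{C}^\times)$ (the inner sum being over $\chi\in\Hom(W,\mathbb{C}^\times)$ with $\chi|_V=\psi$), and a dimension count against surjectivity of $\ling$ extracts a basis among them. Using the Mackey formula for $\res$ and the rule $\defl^G_{G/N}[V,\psi]=[VN/N,\bar\psi]$ when $\psi|_{V\cap N}=1$ (and $=0$ otherwise) for $\defl$, I would compute the images of these basis elements under the maps appearing above. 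The qualitative picture I expect: restriction to the maximal subgroups detects all ``low-lying'' relations, so that $\ker\ling\cap\bigcap_H\ker\res^G_H$ is cut down to an explicitly describable subspace built mostly from the top relations $\eta^G_{G,V,\psi}$ with $V$ maximal; and each $\defl^G_{G/N}$ sends such a top relation either to zero or to an elementary induction relation for $G/N$, so that imposing $\bigcap_N\ker\defl^G_{G/N}$ becomes a homogeneous linear system whose combinatorics is controlled by the incidence of the maximal subgroups and the minimal normal subgroups of $G$ and of the characters that are ``new'' on each quotient $G/N$.

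For $G=C_{p^k}$ this system is trivial to solve: there is a unique maximal subgroup and a unique minimal normal subgroup, the top relations $\eta^{C_{p^k}}_{C_{p^k},C_{p^{k-1}},\psi}$ are all killed by the restriction, and such a relation is killed by the deflation exactly when $\psi$ is not a faithful character of $C_{p^{k-1}}$; hence $\mathcal{K}\kerlin(C_{p^k})$ is the span of those with $\psi$ faithful, of dimension $\varphi(p^{k-1})\ge1$ (for $k=1$ this is the one-dimensional space $\kerlin(C_p)$). For $G=C_{p^k}\times C_p$, exactly $p$ of the $p+1$ maximal subgroups are cyclic and exactly $p$ of the $p+1$ quotients $G/N$ are cyclic, and this ``almost-cyclic'' structure makes the linear system compatible; I would produce an explicit nonzero element of $\mathcal{K}\kerlin(G)$ --- e.g.\ by an inclusion--exclusion over the maximal subgroups applied to the element just described for the cyclic case --- or, equivalently, read it off from the explicit formula for $\mathcal{K}\kerlin(G)$ obtained along the way. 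Either way, $\mathcal{K}\kerlin(G)\ne0$ for every group in the list.

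The substantive part is the converse: $\mathcal{K}\kerlin(G)=0$ whenever $G$ is neither cyclic nor of the form $C_{p^k}\times C_p$. Such a $G$ either has rank at least $3$, or has rank $2$ but is not $C_{p^k}\times C_p$ --- i.e.\ $G\cong C_{p^a}\times C_{p^b}$ with $a,b\ge2$, or $G$ is non-abelian of rank $2$ (the modular groups $M_{p^n}$, and for $p=2$ the dihedral, quaternion, and semidihedral groups). When $G$ has rank at least $3$ one has enough maximal subgroups and minimal normal subgroups that, after deflating to suitable rank-$2$ quotients and invoking the rank-$2$ result, the linear system admits only the zero solution. When $G$ has rank $2$ but is not of the special form, every quotient $G/N$ and every maximal subgroup is again a $p$-group of rank at most $2$, and running the induction on $|G|$ one uses that the only such sections with nonzero restriction kernel are cyclic groups and the groups $C_{p^k}\times C_p$, together with the fact that a putative $0\ne x\in\mathcal{K}\kerlin(G)$ must lie in the \emph{kernel} (not merely map into the restriction kernel) of every $\res^G_H$ and every $\defl^G_{G/N}$; tracking which sections of $G$ are of the special form forces $x=0$. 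The main obstacle is precisely this last analysis: the space of top relations can be large, the deflation maps between top layers are far from injective, and one must follow carefully how the supports of the elementary induction relations transform, using the hyperplane/line structure of $G/\Phi(G)$ and of $\Omega_1(Z(G))$, and --- in the non-abelian cases --- the position of $N$ relative to $[G,G]$, which alters the deflation behaviour on characters. Carrying this through amounts to establishing the full explicit description of $\mathcal{K}\kerlin(G)$ by induction on $|G|$, from which Theorem~\ref{thm:main1} is immediate.
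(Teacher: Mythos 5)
Your plan takes a genuinely different computational route from the paper — you work with the transitive monomial basis $[V,\phi]$ and span the kernel of $\ling$ by ``elementary induction relations'' $\eta^G_{W,V,\psi}$, whereas the paper works throughout in the \emph{primitive idempotent} basis $\{e_{H,h}^G\}$ of $\MonG$, where Corollary \ref{cor:ker-ipot} says $\kerlin(G)$ is spanned by the $e_{H,h}^G$ with $H\neq\langle h\rangle$. That basis is what makes the paper's proof tractable: restrictions, inflations and deflations act diagonally (up to constants) on the $e_{H,h}^G$ (Theorem \ref{thm:bisetactions}), and intersecting the restriction kernels of all proper subgroups cuts one down immediately to the span of the ``top'' idempotents $e_{G,g}^G$, $\langle g\rangle\neq G$. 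In your basis there is no comparable clean statement; your claim that ``restriction to the maximal subgroups ... is cut down to ... the top relations $\eta^G_{G,V,\psi}$'' is plausible but would need a nontrivial rank computation that you do not carry out.

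There are two genuine gaps. First, your proposed induction on $|G|$ rests on the observation that ``for $x\in\mathcal{K}\kerlin(G)$ one has $\res^G_Hx\in\mathcal{K}\kerlin(H)$ and $\defl^G_{G/N}x\in\mathcal{K}\kerlin(G/N)$.'' This is vacuous: by the very definition of the restriction kernel, $\res^G_Hx=0$ and $\defl^G_{G/N}x=0$ whenever $x\in\mathcal{K}\kerlin(G)$, so these containments carry no information and cannot power an induction. The paper does \emph{not} argue this way; rather, it computes explicitly, from the deflation-number formula, that imposing $\defl^G_{G/T}x=0$ for minimal subgroups $T$ translates into a system of linear relations among the coefficients $\alpha_g$ of $x=\sum_g\alpha_g e_{G,g}^G$, and then derives a contradiction by a global counting/summation argument (the relations $\sum_j\alpha_{gt^j}=-\alpha_g$ for two independent $T=\langle t\rangle$, $U=\langle u\rangle$ combine to give $p\cdot\alpha_g=0$). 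Nothing in your sketch replaces this.

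Second, your treatment of the non-abelian case is both more laborious and incomplete. You propose a case analysis over $M_{p^n}$, dihedral, quaternion, and semidihedral groups, and a fuzzy appeal to ``tracking which sections of $G$ are of the special form.'' The paper dispenses with all non-abelian $G$ in one short step (Proposition \ref{pro:nonAb}): because $1\ne G'\le\Phi(G)$, every element of $\delta_\Phi\kerlin(G)$ must deflate to zero on $G/G'$; but $m_{G,g}^{G'}=1$ (Lemma \ref{lem:defnoderived}), so $\defl^G_{G/G'}$ is \emph{injective} on the span of the top idempotents, forcing $\delta_\Phi\kerlin(G)=0$. You would need some analogue of this uniform observation. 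As it stands, the proposal correctly identifies the reduction to restrictions-to-maximal-subgroups and deflations-by-minimal-normal-subgroups, and correctly predicts which groups survive, but the substance of the vanishing arguments — precisely what the paper's Propositions \ref{pro:noCp}--\ref{pro:1Cp1Cpk} supply — is missing.
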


Using \cite[Theorem 2.5]{BCK}, there is a bijection between the set of composition factors of \( \kerlin \) as a \( p \)-biset functor and the set of irreducible constituents of the \( \mathbb{C}[\operatorname{Aut}(G)] \)-modules \( \mathcal{K}\kerlin(G) \), as \( G \) runs over the groups in the above theorem. The second main result of the paper determines these irreducible constituents, summarized as follows:
\begin{theorem}\label{thm:main}
The simple \( p \)-biset functor \( S_{G, V} \) is a composition factor of \( \kerlin \) if and only if one of the following statements hold:
\begin{enumerate}
    \item[\textnormal{(a)}] \( G \) has order \( p \), and \( V \) is the trivial \( \mathbb{C}[\operatorname{Aut}(G)] \)-module.
    \item[\textnormal{(b)}] \( G \) is cyclic of order \( p^k \) (for \( k \geq 2 \)), and \( V \) is any non-primitive irreducible \( \mathbb{C}[\operatorname{Aut}(G)] \)-module.
    \item[\textnormal{(c)}] \( G \cong C_p \times C_p \), and either \( V \) is trivial or \( p \) is odd, with \( V \) an irreducible \( \mathbb{C}[\operatorname{Aut}(G)] \)-module of dimension \( p+1 \), induced from the linear character \( (\phi, 1) \), with \(\phi\not= 1\), of the Borel subgroup of \( \operatorname{Aut}(G) \cong \operatorname{GL}_2(\mathbb{F}_p) \).
    \item[\textnormal{(d)}] \( p = 2 \), \( G \cong C_{2^k} \times C_2 \) (for \( k \geq 2 \)), and \( V \) is one of the \( 2^{k-2} \) irreducible \( \mathbb{C}[\operatorname{Aut}(G)] \)-modules of dimension 2.
    \item[\textnormal{(e)}] \( p \) is odd, \( G \cong C_{p^k} \times C_p \) (for \( k \geq 2 \)), and \( V \) is an irreducible \( \mathbb{C}[\operatorname{Aut}(G)] \)-module of dimension \( p \), as described in Section \ref{sec:restker32}.
\end{enumerate}
Moreover in all cases, the multiplicity of the composition factor is 1.
\end{theorem}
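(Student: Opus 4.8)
The strategy is to feed the explicit descriptions of the restriction kernels obtained in the earlier sections into the correspondence of Theorem~\ref{thm:bck}. That theorem identifies the composition factors of $\kerlin$, viewed as a $p$-biset functor, with the pairs $(H,V)$ in which $H$ runs over the isomorphism classes of finite $p$-groups and $V$ runs over the irreducible constituents of the $\mathbb{C}[\Aut(H)]$-module $\mathcal{K}\kerlin(H)$; moreover, the multiplicity of $S_{H,V}$ in $\kerlin$ equals the multiplicity of $V$ in $\mathcal{K}\kerlin(H)$. By Theorem~\ref{thm:main1}, the only groups $H$ with $\mathcal{K}\kerlin(H)\neq 0$ are the cyclic $p$-groups $C_{p^k}$ $(k\geq 1)$ and the groups $C_{p^k}\times C_p$ $(k\geq 1)$. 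Hence it suffices, for each such $H$, to decompose $\mathcal{K}\kerlin(H)$ as a $\mathbb{C}[\Aut(H)]$-module and to check that every irreducible constituent occurs exactly once; the assertions (a)--(e), and the multiplicity-one statement, then follow by translating through Theorem~\ref{thm:bck}.

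I would first treat the cyclic groups. When $H=C_p$, the explicit description shows that $\mathcal{K}\kerlin(C_p)$ is one-dimensional and that $\Aut(C_p)$ acts trivially on it, giving case~(a). When $H=C_{p^k}$ with $k\geq 2$, the group $\Aut(C_{p^k})\cong(\mathbb{Z}/p^k\mathbb{Z})^\times$ is abelian, so all of its irreducible $\mathbb{C}$-modules are linear characters; starting from the explicit basis of $\mathcal{K}\kerlin(C_{p^k})$ I would compute the character of this module, compare its dimension with the number of non-primitive irreducible $\mathbb{C}[\Aut(C_{p^k})]$-modules, and verify that each of these appears once while no primitive one appears, yielding case~(b) with multiplicity one.

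For the groups $C_{p^k}\times C_p$ I would split off the case $k=1$. Here $H\cong C_p\times C_p$ with $\Aut(H)\cong\operatorname{GL}_2(\mathbb{F}_p)$, and from the explicit basis one reads off the character of $\mathcal{K}\kerlin(C_p\times C_p)$ as a $\operatorname{GL}_2(\mathbb{F}_p)$-module --- which turns out to be that of a permutation module attached to the lines of $\mathbb{F}_p^2$ and their associated character data. Decomposing this character against the character table of $\operatorname{GL}_2(\mathbb{F}_p)$ gives, for $p$ odd, the trivial module together with the $(p+1)$-dimensional module $\operatorname{Ind}_B^{\operatorname{GL}_2(\mathbb{F}_p)}(\phi,1)$ with $\phi\neq 1$, each once; for $p=2$, since $\operatorname{GL}_2(\mathbb{F}_2)\cong S_3$, only the trivial module survives. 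This is case~(c). For $k\geq 2$ I would invoke the structure of $\Aut(C_{p^k}\times C_p)$ recorded in Section~\ref{sec:restker32}, treating $p=2$ and $p$ odd separately: in each case the explicit description of $\mathcal{K}\kerlin(C_{p^k}\times C_p)$ yields the character of the module, and its decomposition produces exactly the $2^{k-2}$ two-dimensional modules of case~(d) when $p=2$, and the single $p$-dimensional module of case~(e) when $p$ is odd, again each with multiplicity one.

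The main obstacle is the analysis for $C_{p^k}\times C_p$ with $k\geq 2$: the relevant automorphism groups are non-abelian with a somewhat intricate structure, so both obtaining the correct list of irreducible constituents and establishing the multiplicity-one claim force one to use the descriptions of the restriction kernels in a form sharp enough to evaluate the necessary inner products of characters exactly, rather than just dimensions. The $C_p\times C_p$ case is more routine once the permutation-module description is available, although it still requires the case distinction between $p=2$ and $p$ odd in the representation theory of $\operatorname{GL}_2(\mathbb{F}_p)$. In every case, the statement that each composition factor occurs with multiplicity one is the most delicate point, and it is precisely what obliges one to work with the exact form of $\mathcal{K}\kerlin(H)$.
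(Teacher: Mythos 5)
Your overall strategy is right and is essentially the same as the paper's: reduce the problem to decomposing the $\mathbb{C}[\Aut(H)]$-modules $\mathcal{K}\kerlin(H)$ for the groups $H$ identified by Theorem~\ref{thm:main1}, then read off the composition factors from the irreducible constituents. The computations you outline for each family of groups also parallel what the paper does. However, there is a genuine gap at the very first step.

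You invoke Theorem~\ref{thm:bck} as if it were an unconditional correspondence between composition factors of $\kerlin$ and irreducible constituents of the restriction kernels. It is not: the theorem has a hypothesis, namely that for every $\mathcal{E}_G$-submodule $M\subseteq F(G)$ one has $M=(M\cap\mathcal{K}F(G))+I_G M$, equivalently Condition~A. Verifying this for $\kerlin$ is exactly what occupies all of Section~4 of the paper (Theorem~\ref{thm:red2Rest} and the surrounding propositions), and it is a substantial part of the argument, not a formality. Worse, Condition~A actually \emph{fails} for $G\cong C_p\times C_p$, so Theorem~\ref{thm:bck} cannot be applied there directly at all. The paper handles this case by a separate argument: restricting $\kerlin$ to the full subcategory $\mathcal{D}$ on $\{1,C_p,C_p\times C_p\}$, using the known subfunctor $S_{C_p,1}\subseteq\kerlin$, and comparing dimensions to get the decomposition $\kerlin(E_2)=F(E_2)\oplus\mathcal{K}\kerlin(E_2)$ as $\mathrm{End}_{\mathcal C}(E_2)$-modules. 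Your proposal says nothing about either of these points, so as written the reduction that the rest of your argument rests on is not justified. Once that is repaired (i.e., once one cites the paper's Corollaries~\ref{cor:resker1} and~\ref{cor:resker2} rather than the raw Theorem~\ref{thm:bck}), the remainder of your outline matches the paper.
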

In part (c), we use the notation from \cite{Shapiro} for the irreducible characters of the Borel subgroup. See Section 6 for details.

Finally combining the above theorem with Bouc's semisimplicity result from \cite{Bouc0}, we obtain the complete list of composition factors of the $p$-biset functor of monomial Burnside rings.
\begin{corollary}
Let $G$ be a finite $p$-group and $V$ be a simple $\mathbb C[\out(G)]$-module. The simple biset functor $S_{G, V}$ is a composition factor of the $p$-biset functor $\mathbb CB^{\mathbb C^\times}$ if and only if either $(G, V)$ is determined by Theorem \ref{thm:main} or $G$ is cyclic and $V$ is primitive.  
\end{corollary}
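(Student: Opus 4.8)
The plan is to extract the composition factors of $\mathbb{C}B^{\mathbb{C}^\times}$ directly from the short exact sequence of $p$-biset functors
\[
0 \longrightarrow \kerlin \longrightarrow \mathbb{C}B^{\mathbb{C}^\times} \xrightarrow{\ \operatorname{Lin}\ } \mathbb{C}\mathcal{R}_{\mathbb{C}} \longrightarrow 0
\]
recalled in the introduction (one forgets the fibering; exactness is tested pointwise, so this remains exact as a sequence of ordinary $p$-biset functors over $\mathbb{C}$). In the abelian category of $p$-biset functors, a simple functor $S_{G,V}$ is a composition factor of the middle term if and only if it is a composition factor of $\kerlin$ or of $\mathbb{C}\mathcal{R}_{\mathbb{C}}$: the ``if'' direction is immediate, and for the ``only if'' direction one writes $S_{G,V}\cong F'/F''$ with $F''\subseteq F'\subseteq \mathbb{C}B^{\mathbb{C}^\times}$, intersects this two-step filtration with $\kerlin$, and observes that the resulting subquotient of $S_{G,V}$ is either all of $S_{G,V}$, so that $S_{G,V}$ is a composition factor of $\kerlin$, or zero, in which case $F'/F''$ is exhibited as a subquotient of $\mathbb{C}B^{\mathbb{C}^\times}/\kerlin\cong\mathbb{C}\mathcal{R}_{\mathbb{C}}$.

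So the task reduces to listing the composition factors of the two outer terms. For $\kerlin$ this is precisely the content of Theorem \ref{thm:main}. For $\mathbb{C}\mathcal{R}_{\mathbb{C}}$ I would invoke Bouc's semisimplicity theorem \cite{Bouc0}: $\mathbb{C}\mathcal{R}_{\mathbb{C}}$ is a semisimple biset functor, and the explicit description of its simple summands shows that those whose minimal group is a finite $p$-group are exactly the $S_{C_{p^n},V}$ with $n\ge 0$ and $V$ a primitive simple $\mathbb{C}[\out(C_{p^n})]$-module. Since the restriction of a simple biset functor $S_{G,V}$ with $G$ a $p$-group to the $p$-biset category is again simple, whereas every simple summand of $\mathbb{C}\mathcal{R}_{\mathbb{C}}$ whose minimal group is not a $p$-group restricts to zero on $p$-groups, the composition factors of $\mathbb{C}\mathcal{R}_{\mathbb{C}}$ as a $p$-biset functor are exactly the $S_{G,V}$ with $G$ cyclic and $V$ primitive. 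Combining this with the list from Theorem \ref{thm:main} yields the corollary.

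If one also wishes to record that each composition factor of $\mathbb{C}B^{\mathbb{C}^\times}$ occurs with multiplicity one, I would note that the two lists are disjoint: the cyclic minimal groups appearing in Theorem \ref{thm:main} carry non-primitive modules ($C_p$ with the trivial module, and $C_{p^k}$ with $k\ge 2$ with a non-primitive module), the remaining minimal groups of Theorem \ref{thm:main} are non-cyclic, and $\mathbb{C}\mathcal{R}_{\mathbb{C}}$ contributes only cyclic minimal groups with primitive modules; moreover each factor has multiplicity one in its respective source (Theorem \ref{thm:main} on the one hand, the explicit semisimple decomposition of \cite{Bouc0} on the other). The only step with genuine content is the identification of the $p$-local composition factors of $\mathbb{C}\mathcal{R}_{\mathbb{C}}$, which I would take as the known input from \cite{Bouc0}; everything else is Jordan--H\"older bookkeeping through the short exact sequence together with an elementary comparison of the two finite lists, and I do not expect any real obstacle there.
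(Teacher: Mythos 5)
Your proposal is correct and takes exactly the same route the paper intends: the paper offers no separate proof beyond the remark that the corollary follows from combining Theorem \ref{thm:main} with Bouc's semisimplicity result for $\mathbb{C}\mathcal{R}_{\mathbb{C}}$, and your argument simply spells out the Jordan--H\"older bookkeeping through the short exact sequence and the restriction to $p$-groups that the authors leave implicit. The only caveat worth noting is that the disjointness of the two lists (and hence the multiplicity-one refinement you add at the end) is not asserted in the corollary itself, but your check of it is sound.
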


Most of the paper is devoted to the proof of this theorem. To illustrate some of the composition factors, we include a few examples. For $p=2$, we have a list of composition factors with minimal group isomorphic to one of the groups $C_2, C_2\times C_2, C_4, C_8, C_4\times C_2$ or $C_8\times C_2$. Also for $p=3$, we have a similar list for groups $C_3, C_3\times C_3, C_9$ or $C_9\times C_3$.

This article is the first part of a two–paper series devoted to the structure of the monomial Burnside $p$-biset functor $B^{\mathbb{C}^\times}$. In the present work we determine the complete list of composition factors of the kernel $\mathbb S_p$ regarded as a $p$-biset functor. The forthcoming second part will be concerned with the description of the full lattice of subfunctors of $\mathbb CB^{\mathbb C^\times}$ and the evaluation of its simple summands at finite $p$-groups.

\section{Generalities on biset functors}\label{sec:pre}

This section introduces the foundational concepts of biset functors and establishes the tools needed for analyzing the kernel of the monomial linearization map. We refer to \cite{Bouc} and \cite{Bouc2} for details and to \cite{BCK} for results concerning composition factors. Although the results are more general we concentrate only on $p$-groups. 

\begin{nothing}{\bf Definition and Notation.}
    Let $\mathcal C_p:=\mathbb C\mathcal C_p$ be the $\mathbb C$-linear category of biset functors on the class of finite $p$-groups. In this category the set of morphisms from $G$ to $H$ is given by the Burnside group $\mathbb C B(H,G)$ of finite $(H\times G)$-bisets. A $\mathbb C$-linear functor $\mathcal C_p\to {}_\mathbb C\text{Vect}$ is called a \textit{$p$-biset functor (over $\mathbb C$)}. We denote the category of all $p$-biset functors by $\mathcal F_p$.

    For a finite $p$-group $G$, we write $\mathcal S'(G)$ for a set of representatives of isomorphism classes of finite $p$-groups of order strictly smaller than $|G|$. We also define 
    \[
    I_G:= \sum_{H\in\mathcal{S}'(G)} \mathbb CB(G,H)\circ \mathbb CB(H,G) 
    \]
    where $\mathbb CB(G,H)\circ \mathbb CB(H,G)$ is the subspace of $\mathbb CB(G, G)$ generated by all compositions $\alpha\circ\beta$ with $\alpha\in\mathbb CB(G,H)$ and $\beta\in\mathbb CB(H,G)$. Then one has
    \[
    \mathbb CB(G, G) \cong I_G \oplus \mathbb C[\out(G)].
    \]
    Here $\out(G)$ denotes the outer automorphism group of $G$. 
\end{nothing}

\begin{nothing}{\bf Simple functors.}
    Let $F$ be a $p$-biset functor. For any $p$-group $G$, the evaluation $F(G)$ is a module over the endomorphism ring $\mathcal E_G:={\rm End}_{\mathcal C_p}(G)$. Furthermore if $F$ is a simple functor, then $F(G)$ is a simple $\mathcal E_G$-module. The group $G$ is said to be \emph{minimal} for the simple functor $F$ if it is a group of minimal order with nonzero evaluation $F(G)$. It is known that minimal group for $F$ is unique up to isomorphism and the evaluation $F(G)$ is annihilated by the ideal $I_G$. In particular $F(G)$ is a simple $\mathbb C[\out(G)]$-module. Moreover by Theorem 4.3.10 of \cite{Bouc}, there is a bijective correspondence between the isomorphism classes of simple $p$-biset functors and the set of pairs $(G, V)$ where $G$ is a finite $p$-group and $V$ is a simple $\mathbb C[\out(G)]$-module, both taken up to isomorphims. We denote the simple functor corresponding to $(G, V)$ by $S_{G, V}$.
\end{nothing}

\begin{nothing}{\bf Restriction Kernels.}
    Let $F$ be a $p$-biset functor. The restriction kernel of $F$ at $G$, denoted $\mathcal{K}F(G)$, identifies elements in $F(G)$ that are annihilated by all maps $F(\alpha)$ for $\alpha\in \mathbb CB(H\times G)$ and $H\in\mathcal{S}'(G)$. Formally
    \[
    \mathcal KF(G) := \bigcap_{\substack{H\in\mathcal{S}'(G)\\ \alpha\in\mathbb CB(H\times G)}} \ker(F(\alpha): F(G)\to F(H))
    \]
    is called the \textit{restriction kernel} of $F$ at $G$. Note that the name is somehow misleading since we need to consider kernels of deflation maps as well as restrictions. It turns out that these modules plays a crucial role in determining composition factors of $F$. 
\end{nothing}
\begin{nothing}{\bf Composition factors.}
    By a \emph{composition factor} of $F$, we mean a simple biset functor $S$ which is isomorphic to a quotient $M/N$ of two subfunctors $N\subset M\subseteq F$. The \emph{multiplicity} $[F:S]$ of $S$ in $F$ is the maximal nonnegative integer $m$ such that there is a sequence 
    \[
    0\subseteq N_1\subset M_1\subseteq N_2\subset M_2 \subseteq \ldots N_m\subset M_m\subseteq F
    \]
    such that $M_i/N_i\cong S$. Then Theorem 2.5 of  \cite{BCK} gives a sufficient condition to reduce the calculation of composition factor multiplicities to those of restriction kernels. We include a complete proof of this theorem in Appendix D for the reader’s convenience.
\end{nothing}
\begin{nothing}
    We note that the version in \cite{BCK} works for modules over Green biset functors. Also as noted in \cite[Proposition 2.6]{BCK}, a consideration of a suitable cyclic submodule shows that the condition of the theorem is equivalent to the following:
\end{nothing}

\noindent\textbf{Condition A.} For any $x\in F(G)$, there exists $x'\in\mathcal{K}F(G)$ and $\alpha\in I_G$ such that $$x = x' + \alpha\cdot x.$$

\begin{remark}
    This theorem can be applied to the problem of determining the composition factors of $F$ in two steps.
    \begin{quote}
    \begin{enumerate}
        \item[Step 1.] Evaluate $\mathcal{K}F(G)$ for each $G$ and prove Condition A for $F$.
        \item[Step 2.] Determine the irreducible summands of the $\mathbb C[\out(G)]$-module $\mathcal{K}F(G)$, whenever it is nonzero.
    \end{enumerate}
    \end{quote}
    The second step depends on the structure of $\mathcal{K}F(G)$. For the first step we use Bouc's decompositions of $p$-biset functors given in \cite{Bouc2} to reduce the problem to simpler cases. We recollect necessary background without details. Following \cite{Bouc2}, for a finite $p$-group $G$, we write
        \[
\varphi_1^G =\frac{1}{|G|} \sum_{\substack{X\le G, M\unlhd G\\ M\le \Phi(G)\le X}} |X|\mu(X, G)\mu_{\unlhd G}(1, M)\ind\infl_{X/M}^G\defl\res^G_{X/M}.
        \]
        In the above sum, $\Phi(G)$ denotes the Frattini subgroup of $G$, $\mu(\cdot, \cdot)$ (resp. $\mu_{\unlhd G}(\cdot, \cdot)$) denotes the Mobius function 
        of the poset of subgroups (resp. of normal subgroups) of $G$. 
        By \cite[Theorem 4.8]{Bouc2}, this is an idempotent of the endomorphism ring $\mathbb CB(G\times G)$ and 
        by Corollary 3.7 of \cite{Bouc2} we have 
        \begin{itemize}
            \item $\res^G_H \varphi^G_1 = 0$ for any $H<G$ and
            \item $\defl^G_{G/M}\varphi^G_1 = 0$ for any $M\unlhd G$ with $M\cap \Phi(G) \not = 1$.
        \end{itemize}
        For a given $p$-biset functor $F$, we define
        \[
        \delta_\Phi F(G):= \varphi^G_1 F(G).
        \]
        Then by Proposition 5.2 of \cite{Bouc2}, the elements $u$ of $F(G)$ contained in $\delta_\Phi F(G)$ are characterized by the following two conditions:
        \begin{align}
    \res^G_H u &= 0\ \text{for all } H < G \text{ and} \label{eqn:delta1} \\
    \defl^G_{G/M} u & = 0 \text{ for all } M\unlhd G, M\cap \Phi(G) \not = 1. \label{eqn:delta2}
\end{align}
In particular we obtain 
\[
\mathcal{K}F(G)\subseteq \delta_\Phi F(G)
\]
and hence we may start calculations for $\mathcal{K}F(G)$ by determining $\delta_\Phi F(G)$. Moreover since any $x\in F(G)$ can be written as $x = \varphi^G_1x + (1-\varphi^G_1)x$ and since $(1-\varphi^G_1)\in I_G$, we also reduce the proof of the validity of Condition A to elements in $\delta_\Phi F(G)$.
\end{remark}
\begin{nothing}
    One may also consider the action of the idempotent $\tilde e_G^G$, as given in \cite{Bouc}. This is the image of the primitive idempotent $e_G^G$ of the Burnside ring $\mathbb CB(G)$ in $\mathbb CB(G\times G)$ as constructed in Section 2.12 of \cite{Bouc2}. By Lemma 2.18 of the same paper, for any biset functor $F$ and finite group $G$, we have
    \[
    \tilde e_G^GF(G) = \bigcap_{H<G} \ker(\res^G_H:F(G) \to F(H)).
    \] Hence we have
    \[
\mathcal{K}F(G)\subseteq \delta_\Phi F(G) \subseteq \tilde e_G^GF(G)
\]
\end{nothing}
\section{Monomial Linearization Map and its Kernel}\label{sec:mon}

This section introduces the monomial Burnside ring and defines the monomial linearization map, a key homomorphism between the monomial Burnside ring and the complex character ring. We also describe the kernel of this map by determining a primitive idempotent basis.

\begin{nothing}{\bf Definitions.}
Let $A$ be an abelian group and $X$ be an  $A\times G$-set. We call $X$ an {\it $A$-fibered $G$-set} if the $A$-action on $X$ is free with finitely many orbits. Together with $A\times G$-equivariant functions, the class of all $A$-fibered $G$-sets forms a category, written ${}_G\operatorname{set}^A$. Disjoint union of two $A$-fibered $G$-sets is an $A$-fibered $G$-set in an obvious way and it induces a categorical product on $\GsetA$. There is also a tensor product defined on $A$-fibered $G$-sets. Given $X, Y\in \GsetA$, the cartesian product $X\times Y$ becomes an $A$-set via 
\[ 
a\cdot (x, y) = (a\cdot x, a^{-1}\cdot y)
\] 
for $a\in A$ and $(x, y)\in X\times Y$. We denote the $A$-orbit containing $(x, y)$ by $x\otimes y$ and write $X\otimes Y$ for the set of all $A$-orbits in $X\times Y$. Then we make $X\otimes Y$ an $A\times G$-set via 
\[ 
(a, g)\cdot x\otimes y := agx\otimes gy.
\] 
It is straightforward to show that this is a categorical product on $\GsetA$. 

An $A$-fibered $G$-set $X$ is said to be \textit{transitive} if the group $G$ acts on the set of $A$-orbits transitively. Let $X\in\GsetA$ be transitive. Fix $x\in X$  and let $H$ be the stabilizer of the $A$-orbit $Ax$ of $x$ in $G$.  Then for any $h\in H$, there is an element $\phi(h)\in A$ such that $h\cdot x = \phi(h)\cdot x$ is satisfied. The function $\phi\colon H\to A$ is a homomorphism. We call the pair $(H, \phi)$ the \textit{stabilizing pair} of $x\in X$. If $y\in X$ is given, then the stabilizing pair of $y$ is $G$-conjugate to $(H, \phi)$.

Denote by $\mathcal M_G^A$ the the set of all such pairs. The group $G$ acts on $\mathcal M_G^A$ via conjugation: ${}^g(H, \phi) = ({}^gH, {}^g\phi)$. We write $[H, \phi]_G$ for the $G$-orbit containing $(H, \phi)$ and $\mathcal M_G^A/G$ for the set of $G$-orbits in $\mathcal M_G^A$. With this notation, the isomorphism classes of transitive sets in $\GsetA$ are in bijective correspondence with the set $\mathcal M_G^A/G$.
\end{nothing}
\begin{nothing}{\bf Monomial Burnside Rings.}
The \emph{monomial Burnside ring} \( B^{\mathbb{C}^\times}(G) \) of a finite group \( G \) is the Grothendieck ring of the category of \( \mathbb{C}^\times \)-fibered \( G \)-sets. As a \( \mathbb{C} \)-algebra, its basis is indexed by the set $\mathcal M^A_G/G$. The product can be identified as the linear extension of the following product on the basis elements. Let $[H, \phi]_G, [K, \psi]_G\in \mathcal M_G^A$. Then 
\[ 
[H,\phi]_G\cdot [K, \psi]_G = \sum_{x\in [H\backslash G/K]}[H\cap {}^xK, \phi|_{H\cap {}^xK}\cdot {}^x\psi|_{H\cap {}^xK}]_G 
\]

Fibered Burnside rings are studied in details in Dress \cite{Dress}, Barker \cite{Bar} and Boltje \cite{Bo8}. More recently in a joint work with Boltje \cite{BoCo}, the second author introduced fibered bisets and fibered biset functors. In this paper we are only interested in the case where $A = \mathbb C^\times$ is the group of nonzero complex numbers. In this case, we call a $\mathbb C^\times$-fibered $G$-set a \textit{monomial $G$-set} and write $\MonG = \mathbb C\otimes B^{\mathbb C^\times}(G)$. 
\end{nothing}

\begin{nothing} \textbf{Primitive Idempotents of $\MonG$.} 
The set of species of the ring $\MonG$ is determined by Dress \cite{Dress} and explicit descriptions of its primitive idempotents are determined independently by Barker \cite{Bar} and Boltje \cite{Bo8}. In this paper we follow Barker's description. 

Consider the set \[ \mathcal N(G) =\{ (H, hH')\colon H\le G, hH'\in H/H' \}. \]
We abbreviate $(H, hH')$ as $(H, h)$. Under the conjugation action of $G$ it becomes a $G$-set. We write $[H, h]_G$ for the $G$-orbit containing $(H, h)$. For each $(H, h)\in \mathcal N(G)$, we define
\[s_{H, h}^G: \MonG\to \mathbb C,\quad [K, \psi]_G\mapsto \sum_{\substack{xK\subset G\\ H\le {}^xK}}({}^x\psi)(h)\]

By Dress \cite{Dress} (also by \cite[Lemma 5.1]{Bar}), $s_{H, h}^G$ is a species of $\MonG$, any species of $\MonG$ is of this form and $s_{H, h}^G = s_{H', h'}^G$ if and only if $(H, h)$ is $G$-conjugate to $(H', h')$. Hence the set $\mathcal N(G)/G$ of $G$-orbits in $\mathcal N(G)$ is in bijection with the set of all species of $\MonG$. By the general theory, for each species $s_{H, h}^G$, there is a primitive idempotent $e_{H, h}^G$ such that 
\begin{align*}
 s_{H, h}^G(e_{K, k}^G) &= \begin{cases}
  1  &  (K, k) =_G (H, h) \\
   0 & \text{otherwise.}
  \end{cases} \\
\end{align*}
The following formula for primitive idempotents can found in \cite[Theorem 5.2]{Bar}.
\end{nothing}

\begin{theorem}[Barker]
For each $(H, h)\in \mathcal N(G)$, we have 
\[ e_{H, h}^G = \frac{1}{|N_G(H, h)|}\sum_{[K, \psi]_G\in\mathcal M_G/G}|K|\mu_G(K, \psi; H, h) [K, \psi]_G\]         
\end{theorem}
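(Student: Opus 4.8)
The plan is to prove the formula by the standard Möbius–inversion argument for primitive idempotents in a split semisimple commutative algebra, exactly parallel to how Gluck's and Yoshida's formulas for the primitive idempotents of the ordinary Burnside ring are obtained. First I would record that $\MonG$ is a split semisimple commutative $\mathbb{C}$-algebra: the species $s_{K,k}^G$, ranging over $\mathcal{N}(G)/G$, form a complete family of $\mathbb{C}$-algebra homomorphisms $\MonG\to\mathbb{C}$, and the mark matrix $\mathcal{T}=\bigl(s_{L,\ell}^G([K,\psi]_G)\bigr)_{[L,\ell]_G,[K,\psi]_G}$ is invertible. Invertibility follows because $s_{L,\ell}^G([K,\psi]_G)=\sum_{xK:\,L\le{}^xK}({}^x\psi)(\ell)$ vanishes unless $L$ is $G$-subconjugate to $K$, so $\mathcal{T}$ is block-triangular for the subconjugacy order on first components, while each diagonal block is a nonsingular matrix built from the character table of $K/K'$ folded by the $N_G(K)$-action (nonsingular by Brauer's permutation lemma together with orthogonality of characters). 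Consequently there is a unique family of primitive orthogonal idempotents $e_{H,h}^G$, characterized by $s_{K,k}^G(e_{H,h}^G)=\delta_{[K,k]_G,[H,h]_G}$.

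Granting this, my approach is verification rather than explicit inversion: let $f_{H,h}^G$ denote the right-hand side of the asserted formula and check that $s_{K,k}^G(f_{H,h}^G)=\delta_{[K,k]_G,[H,h]_G}$ for all $(K,k)\in\mathcal{N}(G)$; uniqueness of the idempotents then forces $f_{H,h}^G=e_{H,h}^G$. Expanding the species,
\[
s_{K,k}^G(f_{H,h}^G)=\frac{1}{|N_G(H,h)|}\sum_{[L,\psi]_G\in\mathcal{M}_G/G}|L|\,\mu_G(L,\psi;H,h)\sum_{\substack{xL\subset G\\ K\le{}^xL}}({}^x\psi)(k),
\]
and the goal is to reorganize this double sum into a single Möbius sum over the interval from $(K,k)$ to $(H,h)$ in Barker's poset on monomial pairs and element pairs. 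The key step is a standard orbit-counting identity: summing over $G$-orbits $[L,\psi]_G$ and then over cosets $xL$ with $K\le{}^xL$ is, after accounting for point-stabilizers, the same as summing over the pairs $(M,m)$ lying between $(K,k)$ and $(H,h)$ in that poset; the factors $|L|$, the coset multiplicities, the orbit–stabilizer ratios, and the normalizer $|N_G(H,h)|$ are arranged so as to cancel, leaving $\sum_{(K,k)\le(M,m)\le(H,h)}\mu_G(M,m;H,h)$, which equals $\delta_{[K,k]_G,[H,h]_G}$ by the defining recursion of $\mu_G$.

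The main obstacle is precisely this bookkeeping of constants: one must track carefully how the group-order factors, coset counts, and normalizers combine when passing between $G$-orbits of monomial pairs and individual pairs, and — more delicately — one must confirm that the incidence relation "$K\le{}^xL$ together with $\psi$ being compatible with $k$" is exactly the order relation of the poset underlying $\mu_G$, so that the sum that emerges is genuinely the Möbius recursion for that poset and not merely the inverse of a block-triangular matrix. Once the combinatorics of Barker's poset (which interleaves the subgroup component with the $K/K'$-component) is in place as in \cite{Bar}, the verification reduces to a routine, if lengthy, interchange of summation, and the normalization $1/|N_G(H,h)|$ is seen to be exactly the factor converting the ghost-ring basis vector back into the monomial Burnside ring idempotent.
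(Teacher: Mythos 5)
The paper does not actually prove this theorem; it cites \cite[Theorem 5.2]{Bar} and explicitly declines even to recall the definition of the monomial M\"obius function $\mu_G(K,\psi;H,h)$. So there is no in-paper proof for your proposal to be compared against, and what I can do is assess the proposal on its own terms.

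Your structural setup is sound: $\MonG$ is split semisimple and commutative, the species $s_{K,k}^G$ for $[K,k]_G\in\mathcal N(G)/G$ are a complete set of algebra maps to $\mathbb C$, and the mark matrix is block-triangular for the subconjugacy order, with nonsingular diagonal blocks coming from character tables of the groups $K/K'$. Verifying $s_{K,k}^G(f_{H,h}^G)=\delta_{[K,k]_G,[H,h]_G}$ is indeed the right strategy and is how the analogous Gluck--Yoshida formula is proved.

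The gap is in the step where you claim the double sum collapses to $\sum_{(K,k)\le(M,m)\le(H,h)}\mu_G(M,m;H,h)$ and then invoke ``the defining recursion of $\mu_G$.'' This mischaracterizes the inversion that is actually taking place. The entries of the mark matrix here are not $\{0,1\}$ poset incidence values: they are the complex numbers $\sum_{xL:\,K\le{}^xL}({}^x\psi)(k)$, which carry character data as well as containment data. Consequently the inverse matrix is \emph{not} the classical M\"obius function of any poset, and the cancellation that forces the delta does not come from a zeta--M\"obius convolution alone; it has to come from character orthogonality (more precisely, orthogonality on each diagonal block, where $L$ and $K$ are $G$-conjugate and the sum runs over characters $\psi$ of $L/L'$ folded by the $N_G(L)$-action, paired against the element $k$), interleaved with the subgroup M\"obius inversion between different blocks. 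You allude to ``Brauer's permutation lemma together with orthogonality of characters'' only to justify nonsingularity, but that mechanism is in fact the engine of the verification itself and needs to appear explicitly in the reorganization of the double sum. Moreover, $(K,\psi)\in\mathcal M_G$ and $(H,h)\in\mathcal N(G)$ are objects of different types (a pair ``subgroup, character'' versus ``subgroup, coset in the abelianization''), so the ``poset on monomial pairs and element pairs'' you invoke is not a poset in the usual sense, and the phrase ``defining recursion of $\mu_G$'' presumes a definition you never state. Without writing down Barker's actual definition of $\mu_G$ and showing that it is (up to the $|L|$ and $|N_G(H,h)|$ normalizations) the two-sided inverse of the mark matrix, the proposal does not close.
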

\noindent
Here $N_G(H, h)$ denotes the stabilizer of $(H, h)$ in $G$ and $\mu_G(K, \psi; H, h)$ is the monomial Mobius function defined in \cite[Section 5]{Bar}. We do not need the explicit definition. Note that we have $|\mathcal M_G/G| = |\mathcal N(G)/G|$. In \cite{BoYi}, there is another description of primitive idempotents of $\MonG$ parameterized by the set $\mathcal M_G/G$. 

\begin{nothing} \textbf{Monomial linearization map.}
We denote by $\mathcal R_\mathbb C(G)$ the complex character ring of $G$. As an abelian group, it is free on the set $\operatorname{Irr}(G)$ of irreducible characters of $G$. The product is given by the product of characters. The monomial linearization map translates the combinatorial structure of monomial $G$-sets into the representation-theoretic framework of $\mathbb CG$-modules by associating $(H, \phi)$ to the induced character $\ind_H^G\phi$. More precisely, the \emph{monomial linearization map}
\[
\ling \colon B^{\mathbb{C}^\times}(G) \to \mathcal{R}_{\mathbb{C}}(G)
\]
is defined as the linear extension of the map
\[
\ling([H, \varphi]_G) = \operatorname{Ind}_H^G \varphi.
\]
This map is surjective by Brauer's Induction Theorem. In this paper we are interested in the kernel of $\ling$ over $\mathbb C$. It is possible to define $\ling$ as a functor ${}_G\text{set}^{\mathbb C^\times} \to {}_{\mathbb CG}\text{mod}$ by mapping a monomial $G$-set $X$ to the monomial $\mathbb CG$-module $\mathbb CX$. Then $\ling$ is the map induced by this functor on the Grothendieck rings of these categories.

The following proposition evaluates images of primitive idempotents under $\ling$ in terms of primitive idempotents of $\mathbb C\mathcal{R}_\mathbb C (G)$. The result can be found in \cite{Bpre}, we include a proof for the reader's convenience.
\end{nothing}

\begin{proposition}[Boltje]
    Let $[H, h]_G\in\mathcal N(G)/G$. Then
    \begin{equation*}
    \ling(e_{H, h}^G) = \begin{cases}
              e_h^G & \text{if } H = \langle h\rangle,\\
               0 & \text{otherwise}.
          \end{cases}
\end{equation*}
\end{proposition}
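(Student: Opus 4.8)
The plan is to compute $\ling(e_{H,h}^G)$ by testing it against the species of $\mathbb C\mathcal R_\mathbb C(G)$. Recall that the primitive idempotents $e_\chi^G$ of $\mathbb C\mathcal R_\mathbb C(G)$ (equivalently, after identification, the idempotents $e_h^G$ indexed by conjugacy classes of elements $h\in G$ via the correspondence between $\operatorname{Irr}(G)$-supported and conjugacy-class-supported bases of $\mathbb C\mathcal R_\mathbb C(G)$) are detected by the evaluation-at-$h$ species $t_h^G\colon \mathcal R_\mathbb C(G)\to\mathbb C$, $\chi\mapsto\chi(h)$. So it suffices to evaluate $t_k^G(\ling(e_{H,h}^G))$ for every conjugacy class representative $k\in G$ and show it equals $1$ when $(H,h)=_G(\langle h\rangle,h)$ and $k=_G h$, and $0$ otherwise.

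First I would observe that the composite $t_k^G\circ\ling\colon\MonG\to\mathbb C$ is a species of $\MonG$, hence equals some $s_{L,\ell}^G$; identifying which one is the key computation. Applying the definitions, for a basis element $[K,\psi]_G$ one has $t_k^G(\ling([K,\psi]_G)) = (\ind_K^G\psi)(k) = \sum_{xK\,:\,{}^{x^{-1}}k\in K}({}^x\psi)(k)$, where the sum runs over cosets $xK$ with $x^{-1}kx\in K$. Comparing this against the formula $s_{L,\ell}^G([K,\psi]_G) = \sum_{xK\,:\,L\le{}^xK}({}^x\psi)(\ell)$, I expect to identify $t_k^G\circ\ling = s_{\langle k\rangle,k}^G$: the condition $x^{-1}kx\in K$ is exactly $\langle k\rangle\le{}^xK$ up to reindexing, and $({}^x\psi)(k)$ matches $({}^x\psi)(\ell)$ with $\ell=k$. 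The bookkeeping with left versus right cosets and the conjugation conventions is the one place that needs care.

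Granting the identification $t_k^G\circ\ling = s_{\langle k\rangle,k}^G$, the proposition follows immediately from the defining property of the primitive idempotents of $\MonG$: $s_{\langle k\rangle,k}^G(e_{H,h}^G) = 1$ if $(\langle k\rangle,k) =_G (H,h)$ and $0$ otherwise. Now $(\langle k\rangle,k) =_G (H,h)$ forces $H = {}^x\langle k\rangle = \langle{}^xk\rangle$ to be cyclic generated by (a conjugate of) $h$; in particular if $H\neq\langle h\rangle$ this never happens, so $t_k^G(\ling(e_{H,h}^G))=0$ for all $k$, giving $\ling(e_{H,h}^G)=0$. If instead $H=\langle h\rangle$, then $(\langle k\rangle,k)=_G(\langle h\rangle,h)$ holds precisely when $k$ is $G$-conjugate to $h$, so $t_k^G(\ling(e_{H,h}^G))$ is $1$ for $k=_G h$ and $0$ otherwise; since the idempotents $e_h^G$ of $\mathbb C\mathcal R_\mathbb C(G)$ are characterized by exactly these species values, we conclude $\ling(e_{\langle h\rangle,h}^G) = e_h^G$.

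The main obstacle I anticipate is purely notational rather than conceptual: matching the summation index sets in the two species formulas (the monomial one summing over cosets $xK$ with $H\le{}^xK$, versus the induced-character formula over cosets with a conjugate of $k$ landing in $K$) and making sure the homomorphism values $({}^x\psi)(h)$ are evaluated at the correct element under the chosen conjugation convention. Once the dictionary is set up correctly, everything else is a formal consequence of the orthogonality of species and primitive idempotents on both sides.
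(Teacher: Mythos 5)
Your proposal is correct and takes essentially the same approach as the paper: both compose $\ling$ with the evaluation-at-$g$ species of $\mathbb C\mathcal R_\mathbb C(G)$, identify the composite as the species $s_{\langle g\rangle, g}^G$ of $\MonG$ via the induced character formula, and conclude by the orthogonality of species and primitive idempotents. The coset/conjugation bookkeeping you flag does work out exactly as you expect (and exactly as the paper writes it), so there is no gap.
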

\begin{proof}
   For each $g\in G$, we need to evaluate the $e_g^G$-coordinate of $\ling(e_{H, h}^G)$. This coordinate is is given by the composition 
   \[
   s_g^G\circ \ling: \MonG \to \mathbb C\mathcal R_\mathbb C(G)\to \mathbb C
   \]
   where $s_g^G(f) = f(g)$ for any $f\in \mathbb C\mathcal R_\mathbb C(G)$. The composite function at the basis element $[K, \psi]_G\in \mathcal M_G/G$ is evaluated as follows:
   \begin{eqnarray*}
       s_g^G\circ \ling([K, \psi]_G) &=& s_g^G(\ind_K^G\psi) = \ind_K^G\psi(g)\\
       &=& \sum_{xK\subseteq G:g\in {}^xK}({}^x\psi) (g)\\
       &=& s_{\langle g\rangle, g}^G([K, \psi]_G).
   \end{eqnarray*}
   Now it follows that $s_g^G\circ \ling = s_{\langle g\rangle, g}^G$ and hence
   \[
   s_g^G\circ \ling(e_{H, h}^G) = s_{\langle g\rangle, g}^G(e_{H, h}^G) = \begin{cases}
              e_h^G & \text{if } H = \langle h\rangle =_G \langle g\rangle,\\
               0 & \text{otherwise}.
          \end{cases} 
   \]
\end{proof}
\begin{corollary}\label{cor:ker-ipot}
The kernel of \( \ling \) is spanned by primitive idempotents \( e_{H,h}^G \) corresponding to pairs \( (H, h) \) where \( H \neq \langle h \rangle \). Explicitly,
\[
\ker(\ling) = \operatorname{span}_{\mathbb C}\{ e_{H,h}^G \mid (H,h)\in \mathcal N(G), H \neq \langle h \rangle \}.
\]
\end{corollary}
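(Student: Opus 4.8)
The plan is to deduce the Corollary directly from the preceding Proposition of Boltje together with standard facts about semisimple commutative algebras. Recall that $\MonG = \mathbb{C}\otimes B^{\mathbb{C}^\times}(G)$ is a split semisimple commutative $\mathbb{C}$-algebra, so its primitive idempotents $\{e_{H,h}^G : [H,h]_G \in \mathcal{N}(G)/G\}$ form a $\mathbb{C}$-basis, and likewise $\mathbb{C}\mathcal{R}_{\mathbb{C}}(G)$ is split semisimple with primitive idempotent basis $\{e_h^G : [h]_G\}$ indexed (via the species $s_g^G$) by conjugacy classes of elements of $G$. Since $\ling$ is a $\mathbb{C}$-algebra homomorphism between split semisimple commutative algebras, it sends each primitive idempotent either to a primitive idempotent of the target or to $0$, and the Proposition tells us exactly which: $\ling(e_{H,h}^G) = e_h^G$ when $H = \langle h\rangle$, and $\ling(e_{H,h}^G) = 0$ otherwise.

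First I would observe that the idempotents $e_{H,h}^G$ with $H \neq \langle h\rangle$ all lie in $\ker(\ling)$ by the Proposition, hence their $\mathbb{C}$-span is contained in $\ker(\ling)$. For the reverse inclusion, I would take an arbitrary element $x \in \ker(\ling)$ and write it in the idempotent basis as $x = \sum_{[H,h]_G} \lambda_{H,h}\, e_{H,h}^G$ with $\lambda_{H,h} \in \mathbb{C}$. Applying $\ling$ and using the Proposition, $0 = \ling(x) = \sum_{[H,h]_G:\, H = \langle h\rangle} \lambda_{H,h}\, e_h^G$. Here I would note that as $(H,h)$ ranges over the $G$-orbits in $\mathcal{N}(G)$ with $H = \langle h\rangle$, the elements $e_h^G$ are pairwise distinct primitive idempotents of $\mathbb{C}\mathcal{R}_{\mathbb{C}}(G)$ — indeed $(\langle h\rangle, h)$ and $(\langle h'\rangle, h')$ are $G$-conjugate precisely when $h$ and $h'$ are $G$-conjugate — and in particular they are linearly independent. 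Therefore $\lambda_{H,h} = 0$ for every orbit with $H = \langle h\rangle$, so $x$ lies in the span of the remaining idempotents, those with $H \neq \langle h\rangle$. This proves $\ker(\ling) \subseteq \operatorname{span}_{\mathbb{C}}\{e_{H,h}^G : H \neq \langle h\rangle\}$ and hence equality.

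I do not anticipate a genuine obstacle here: the argument is essentially a bookkeeping exercise once the Proposition is in hand, and the only point requiring a moment's care is the claim that the idempotents $e_h^G$ appearing in $\ling(x)$ are genuinely distinct (so that no cancellation can occur) — this follows from the fact, recorded in the statement of Boltje's formula and the surrounding discussion, that $s_g^G$ identifies species of $\mathbb{C}\mathcal{R}_{\mathbb{C}}(G)$ with conjugacy classes, together with the observation in the proof of the Proposition that $s_g^G \circ \ling = s_{\langle g\rangle, g}^G$. One could alternatively phrase the whole Corollary abstractly: $\ker(\ling)$ is spanned by those primitive idempotents of $\MonG$ not lying in the support of the (split) surjection $\ling$, and the Proposition identifies that support as the set of pairs with $H = \langle h\rangle$.
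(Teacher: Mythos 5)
Your proposal is correct and is exactly the natural deduction the paper intends (the paper states the corollary without proof, immediately after Boltje's Proposition). The one point requiring care — that the idempotents $e_h^G$ arising from distinct $G$-orbits $[\langle h\rangle, h]_G$ in $\mathcal N(G)$ are pairwise distinct, hence linearly independent — you handle correctly by observing that $(\langle h\rangle, h)$ and $(\langle h'\rangle, h')$ are $G$-conjugate precisely when $h$ and $h'$ are, so no cancellation can occur in $\ling(x)$.
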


As mentioned in the introduction, when regarded as a morphism of fibered $p$-biset functors, the kernel of the linearization map is simple, isomorphic to $\scp$. From now on we write $\scp$ for this kernel regarded as a (fibered) $p$-biset functor. 
Combining with Corollary \ref{cor:ker-ipot}, we conclude the following result.

\begin{corollary}
    For any finite $p$-group $G$, the  vector space $\scp(G)$ has a basis consisting of the primitive idempotents $e_{H, h}^G$ of $\MonG$ as $(H, h)\in \mathcal N(G)$ runs over the $G$-orbits of the pairs $(H, h)$ where $H\not = \langle h\rangle$. 
\end{corollary}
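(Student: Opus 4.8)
The plan is to transport the description of $\ker(\ling)$ from Corollary~\ref{cor:ker-ipot} through the short exact sequence of fibered $p$-biset functors
\[
0 \to \kerlin \to \MonB \xrightarrow{\operatorname{Lin}} \mathbb C\mathcal R_\mathbb C \to 0
\]
recalled above. First I would observe that this sequence is a sequence of $p$-biset functors in particular (forgetting the fibered structure is harmless for the statement, since we only care about evaluations at a single group $G$ as vector spaces). Exactness at $\MonB$ means that for each finite $p$-group $G$ the evaluation $\kerlin(G)$ is, as a vector space, identified with the kernel of the linear map $\ling\colon \MonG \to \mathbb C\mathcal R_\mathbb C(G)$. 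This is the only input from functoriality that is needed.

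The second and essentially only remaining step is to quote Corollary~\ref{cor:ker-ipot}: since $\{e_{H,h}^G : (H,h) \in \mathcal N(G)/G\}$ is a $\mathbb C$-basis of $\MonG$ consisting of primitive idempotents, and $\ling$ maps each such idempotent either to the idempotent $e_h^G$ of $\mathbb C\mathcal R_\mathbb C(G)$ (when $H = \langle h\rangle$) or to $0$ (otherwise, by Boltje's Proposition), the kernel of $\ling$ is precisely the span of those $e_{H,h}^G$ with $H \ne \langle h\rangle$. Moreover these particular idempotents are linearly independent (being part of a basis of $\MonG$), so they form a basis of $\ker(\ling) = \kerlin(G)$. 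Combining the vector space identification $\kerlin(G) \cong \ker(\ling)$ from the exact sequence with this explicit basis of $\ker(\ling)$ gives exactly the asserted basis of $\kerlin(G) = S_{C_p, \{1\}, \{1\}, \mathbb C}(G)$.

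There is really no serious obstacle here: the statement is a direct corollary, as its placement suggests, and the only point requiring a word of care is the compatibility of the two descriptions of $\kerlin$ --- namely that the abstract simple fibered $p$-biset functor $S_{C_p, \{1\}, \{1\}, \mathbb C}$ appearing as the kernel in \cite{CoYi} is evaluated at $G$ by the subspace $\ker(\ling)$ of $\MonG$, and not merely abstractly isomorphic to it via some noncanonical map. This is immediate from the fact that the kernel of a morphism of biset functors is computed pointwise (the inclusion $\kerlin \hookrightarrow \MonB$ is evaluated at $G$ as the inclusion of $\ker(\ling)$ into $\MonG$), so nothing further is needed. I would therefore present the proof as a two-line deduction, referencing Corollary~\ref{cor:ker-ipot} for the basis and the displayed short exact sequence for the identification of $\kerlin(G)$ with $\ker(\ling)$.
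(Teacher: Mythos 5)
Your proposal is correct and is essentially the same argument the paper intends: the paper simply says ``Combining with Corollary~\ref{cor:ker-ipot}, we conclude the following result,'' relying implicitly on the pointwise identification $\kerlin(G) = \ker(\ling)$ from the short exact sequence. You have merely spelled out that pointwise evaluation, which is a reasonable bit of care but not a different route.
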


\begin{nothing}
    We also include biset actions on primitive idempotents. Details of formulas can be found in \cite{Bar}, \cite{BoYi} and \cite{CoYi}.
\end{nothing}
\begin{theorem}[\cite{Bar}, \cite{BoYi}, \cite{CoYi}]\label{thm:bisetactions} 
    Let $K\le G$ and $N\unlhd G$. 
    
    \textbf{(a)} For any $(H,h)\in\mathcal{N}(G)$, we have
    \[
    \res^G_K e_{H, h}^G = \sum_{(H',h')} e_{H', h'}^K
    \]
    where the sum is over all pairs $(H', h')\in\mathcal{N}(K)$ up to $K$-conjugacy which are $G$-conjugate to $(H,h)$.

    \smallskip

    \textbf{(b)} For any $(H, h)\in\mathcal{N}(K)$, we have
    \[
    \ind_K^G e_{H, h}^G = \frac{|N_G(H,h)|}{|N_K(H,h)|}e_{H,h}^G
    \]
    \smallskip
  
    \textbf{(c)} For any $(H/N, hN)\in\mathcal{N}(G/N)$, we have
    \[
    \infl_{G/N}^G e_{H/N,hN}^{G/N} = \sum_{L, l}e_{L, l}^G
    \]
    where the sum is over all pairs $(L,l)\in\mathcal{N}(G)$ such that $(LN/N, lN)$ is $G/N$-conjugate to $(H/N, hN)$.

    \smallskip

    \textbf{(d)} For any $(H, h)\in\mathcal{N}(G)$, we have
    \[
    \defl^G_{G/N} e^G_{H, h} = m_{H, h}^{G, N} e^{G/N}_{HN/N, hN}.
    \]
    for some constant $m_{H, h}^{G, N}$, called the deflation number for $(H,h)$ at $N$. 

    \smallskip

    \textbf{(e)} The deflation number $m_{G, g}^N:= m_{G, g}^{G, N}$ is given by
    \[
    m_{G, g}^N = \frac{1}{|NG'|}\sum_{\substack{V\le G\\ VN = G}} |V\cap gG'|\mu(V, G).
    \]
    Here $G'$ denotes the derived subgroup of $G$.

    \smallskip
    
    \textbf{(f)} Let $\lambda:G' \to G$ be an isomorphism. Then
    \[
    \iso_{G', G}^\lambda (e_{H, h}^G) = e_{\lambda(H), \lambda(h)}^{G'}.
    \]
    
\end{theorem}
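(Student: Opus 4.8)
The plan is to derive all six formulas from a single mechanism: the duality $s_{H,h}^G(e_{K,k}^G)=\delta_{[H,h]_G,[K,k]_G}$ between Dress's species and the primitive idempotents, combined with the known action of the five biset operations on the monomial basis elements $[P,\psi]$ (as recorded in \cite{Dress}, \cite{Bar}, and, in the fibered biset language, \cite{BoCo}). Indeed, for any biset operation $M$ between the relevant monomial Burnside algebras and any primitive idempotent $e_{K',k'}^L$ of the target, the coefficient of $e_{K',k'}^L$ in $M(e_{H,h}^G)$ equals $(s_{K',k'}^L\circ M)(e_{H,h}^G)$; so it is enough to identify the linear functional $s_{K',k'}^L\circ M$ and then evaluate it using Barker's idempotent formula.

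For restriction, inflation and isotypy — parts (a), (c), (f) — the operation $M$ is a $\mathbb{C}$-algebra homomorphism, so $s_{K',k'}^L\circ M$ is again an algebra homomorphism, hence by Dress's classification of species it is either zero or a single species of the source. A short check on the monomial basis identifies it in each case ($s_{H',h'}^K\circ\res^G_K=s_{H',h'}^G$; $s_{L,l}^G\circ\infl^G_{G/N}=s_{LN/N,\,lN}^{G/N}$; isotypy transports the species along the isomorphism), and feeding this back through the duality produces exactly the asserted sums of idempotents. For induction and deflation — parts (b) and (d) — the operation is not multiplicative, so instead I would use the \emph{projection (Frobenius) formulas} valid in the monomial Burnside Green biset functor, namely $\ind_K^G(x)\cdot y=\ind_K^G(x\cdot\res^G_K y)$ and $\defl^G_{G/N}(\infl^G_{G/N}(y)\cdot x)=y\cdot\defl^G_{G/N}(x)$. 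Combining the first with part (a), and the second with part (c), together with the orthogonality of primitive idempotents, forces $\ind_K^G e_{H,h}^K$ to be a scalar multiple of the single idempotent $e_{H,h}^G$ and $\defl^G_{G/N}e_{H,h}^G$ to be a scalar multiple of $e_{HN/N,hN}^{G/N}$; this already yields the shape of (b) and (d). The scalar in (b) is then pinned to $|N_G(H,h)|/|N_K(H,h)|$ by the routine Mackey-formula count of the multiplicity of $e_{H,h}^K$ inside $\res^G_K\ind_K^G e_{H,h}^K$.

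The one genuinely computational statement is (e), the closed expression for the deflation number $m_{G,g}^N$. Here I would substitute Barker's formula $e_{G,g}^G=\frac{1}{|N_G(G,g)|}\sum_{[K,\psi]}|K|\,\mu_G(K,\psi;G,g)\,[K,\psi]_G$ into the explicit description of $\defl^G_{G/N}$ on the monomial basis — whose structure is precisely what forces only the subgroups $V$ with $VN=G$ to contribute — apply the species $s_{G/N,\,gN}^{G/N}$, and collapse the resulting double sum by a Möbius inversion over the subgroup lattice of $G$, matching the monomial Möbius function against the ordinary one after passing to $G/N$, arriving at $m_{G,g}^N=\frac{1}{|NG'|}\sum_{V\le G,\,VN=G}|V\cap gG'|\,\mu(V,G)$. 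I expect this last Möbius bookkeeping to be the main obstacle; it is the computation carried out in \cite{Bar}, \cite{BoYi} and \cite{CoYi}, and one can sanity-check the final answer against $\defl^G_{G/N}\circ\infl^G_{G/N}=\id$ together with parts (c) and (d).
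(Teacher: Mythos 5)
The paper does not actually prove Theorem~\ref{thm:bisetactions}; the text immediately preceding it reads ``We also include biset actions on primitive idempotents. Details of formulas can be found in \cite{Bar}, \cite{BoYi} and \cite{CoYi}.'' So there is no in-paper proof to compare against, and your proposal must stand on its own.

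That said, your unifying strategy is sound and, as far as I can tell, would work. The central identity $s_{H',h'}^K\circ\res^G_K=s_{H',h'}^G$ checks out: unwinding $\res^G_K[P,\psi]_G=\sum_{x\in K\backslash G/P}[K\cap{}^xP,({}^x\psi)|]_K$ and applying $s_{H',h'}^K$ produces exactly the double-coset reparametrization that reconstitutes $s_{H',h'}^G$, and the analogous verification $s_{L,l}^G\circ\infl^G_{G/N}=s_{LN/N,lN}^{G/N}$ goes through because $\psi$ factors through $Q/N$. Feeding these into the duality $s_{K,k}^L(e_{H,h}^L)=\delta_{[K,k]_L,[H,h]_L}$ gives (a), (c) and (f) cleanly. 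Your use of the projection formulas to get the \emph{shape} of (b) and (d) is also the right idea: the Frobenius identity $\ind_K^G(x\cdot\res^G_K y)=\ind_K^G(x)\cdot y$ combined with orthogonality of the $e_{H',h'}^G$ forces $\ind_K^G e_{H,h}^K$ to be supported on $e_{H,h}^G$, and the analogous inflation--deflation projection formula forces $\defl^G_{G/N}e_{H,h}^G$ to be supported on $e_{HN/N,hN}^{G/N}$.

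Two places where the sketch is genuinely incomplete and you should not wave your hands. First, you invoke the projection formula $\defl^G_{G/N}(\infl^G_{G/N}(y)\cdot x)=y\cdot\defl^G_{G/N}(x)$ for $\MonG$. This is true, but it is a statement about the Green (fibered) biset functor structure of the monomial Burnside ring and is not as ``obviously standard'' as the ordinary Burnside ring case; you should either prove it directly on the $[H,\phi]_G$-basis or cite where it is established (it is in the fibered biset framework of \cite{BoCo}). Second, and more seriously, part (e) is the entire computational content of the theorem, and your proposal reduces it to ``apply Barker's idempotent formula, apply the species $s_{G/N,gN}^{G/N}$, and collapse by Möbius inversion'' --- which is a description of the problem, not a solution. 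The collapse from the monomial Möbius function $\mu_G(K,\psi;G,g)$ down to the ordinary $\mu(V,G)$ and the appearance of $|V\cap gG'|$ and the normalizing factor $|NG'|^{-1}$ is a nontrivial bookkeeping exercise; as stated, this step is a gap, and you correctly flag it as such. For a self-contained proof you would need to carry it out (or, as the paper does, simply cite \cite{Bar}, \cite{BoYi}, \cite{CoYi}).

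In short: your approach is coherent, organizes (a)--(d) and (f) elegantly around species duality and projection formulas, and would survive careful write-up; but (e) is left as an acknowledged obstacle, and the fibered deflation projection formula needs a reference or a proof. Since the paper itself offers only a citation, your sketch is at least as detailed as what the paper supplies.
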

\begin{nothing}
    In the rest of the paper we mostly need explicit formulas for the deflation numbers for pairs of the form $(G, g)$ where $G$ is an abelian group. The only exception is the case where $N=G'$ is the derived subgroup of $G$, which is covered below. We also recall the formulas for elementary abelian $p$-groups.     
\end{nothing}
\begin{lemma}\label{lem:defnoderived}
    Let $G$ be a $p$-group and $G'$ its derived subgroup. Then $m_{G, g}^{G'} = 1$.
\end{lemma}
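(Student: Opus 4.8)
The plan is to compute the deflation number $m_{G,g}^{G'}$ directly from the formula in Theorem~\ref{thm:bisetactions}(e), namely
\[
m_{G, g}^{G'} = \frac{1}{|G'G'|}\sum_{\substack{V\le G\\ VG' = G}} |V\cap gG'|\,\mu(V, G) = \frac{1}{|G'|}\sum_{\substack{V\le G\\ VG' = G}} |V\cap gG'|\,\mu(V, G),
\]
since $N = G'$ forces $NG' = G'$. The key observation is that a subgroup $V \le G$ satisfies $VG' = G$ if and only if $V$ surjects onto the abelianization $G/G'$; equivalently $V$ contains a set of generators of $G$ modulo the Frattini subgroup, so by the Burnside basis theorem for $p$-groups such a $V$ must equal $G$ itself (every proper subgroup of a $p$-group is contained in a maximal, hence in $\Phi(G)\supseteq G'$-containing, proper subgroup, so $VG' \ne G$ for $V < G$). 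Therefore the sum collapses to the single term $V = G$, where $\mu(G,G) = 1$ and $|G \cap gG'| = |gG'| = |G'|$, giving $m_{G,g}^{G'} = |G'|/|G'| = 1$.

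The only point requiring care is the reduction of the indexing set to $\{G\}$. Here I would argue as follows: for a finite $p$-group $G$, every maximal subgroup $M$ is normal of index $p$ and contains the Frattini subgroup $\Phi(G)$; since $G' \le \Phi(G)$ for a $p$-group (as $G/\Phi(G)$ is elementary abelian, hence abelian), any proper subgroup $V < G$ lies inside some maximal $M \supseteq \Phi(G) \supseteq G'$, whence $VG' \le M < G$. Thus the condition $VG' = G$ singles out $V = G$ uniquely. This is essentially the same Frattini-argument structure already invoked in the discussion of $\varphi_1^G$ earlier in the paper, so it should be stated briefly.

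I do not anticipate a genuine obstacle here; the statement is a short computation once the formula of Theorem~\ref{thm:bisetactions}(e) is in hand, and the lemma is really a normalization fact (\emph{full deflation to the abelianization carries primitive idempotents to primitive idempotents with coefficient one}). The one thing to double-check is the value of $|V \cap gG'|$ for $V = G$: since $g \in G$ and $G' \le G$, the coset $gG'$ is entirely contained in $G$, so $|G \cap gG'| = |gG'| = |G'|$, and this cancels the prefactor $1/|G'|$ exactly. I would present the proof in three sentences: invoke the formula, collapse the sum via the Burnside basis/Frattini observation, and evaluate the surviving term.
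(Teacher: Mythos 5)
Your proof is correct and follows essentially the same route as the paper: invoke Theorem~\ref{thm:bisetactions}(e), use $G' \le \Phi(G)$ together with the Frattini/Burnside-basis argument to collapse the sum to the single term $V = G$, and evaluate. The only cosmetic differences are that the paper separates out the trivial case $G' = 1$ (which your argument handles uniformly) and that you spell out the cancellation $|G'|/|G'| = 1$, which the paper leaves compressed.
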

\begin{proof}
    If $G'=1$, then the deflation number is clearly 1. Hence assume $G'\not = 1$. Since $G$ is a $p$-group, we have $G'\le \Phi(G)$. Hence for any subgroup $V\le G$, the equality $VG' = G$ implies $V = G$. Hence the sum in Theorem \ref{thm:bisetactions} (e) reduces to only one term which is 1, as required.
\end{proof}
\begin{lemma}\label{lem:defnoFrat}
    Let $G$ be an abelian $p$-group. Then for any subgroup $1\not = N\le G$, we have
    \[
    m_{G, g}^N =\frac{1}{|N\cap \Phi(G)|} m_{G/\Phi(G), g\Phi(G)}^{N\Phi(G)/\Phi(G)}.
    \]
\end{lemma}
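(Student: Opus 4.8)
The plan is to start from the closed formula for the deflation number in Theorem \ref{thm:bisetactions}(e), simplify it using that $G$ is abelian, and then transport the resulting Möbius sum along the quotient $G\to G/\Phi(G)$ via the classical vanishing property of the Möbius function of a $p$-group.

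First I would observe that, since $G$ is abelian, $G'=1$, so $gG'=\{g\}$ and $NG'=N$; hence Theorem \ref{thm:bisetactions}(e) reads
\[
m_{G,g}^{N}=\frac{1}{|N|}\sum_{\substack{V\le G\\ VN=G,\ g\in V}}\mu(V,G).
\]
Next I would invoke the well-known fact that, for a $p$-group, the Möbius function $\mu(V,G)$ of the subgroup poset vanishes unless $\Phi(G)\le V$ (a subgroup with nonzero Möbius value is an intersection of maximal subgroups, and $\Phi(G)$ is the intersection of all of them); this already underlies the formula for $\varphi_1^G$ recalled in Section \ref{sec:pre}. So the sum above may be restricted to the subgroups $V$ with $\Phi(G)\le V\le G$.

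Writing $\bar X$ for the image in $\bar G:=G/\Phi(G)$ of a subgroup $X\supseteq\Phi(G)$, the assignment $V\mapsto\bar V$ is a poset isomorphism from $\{V:\Phi(G)\le V\le G\}$ onto the set of all subgroups of $\bar G$, and it restricts to an isomorphism of intervals $[V,G]\cong[\bar V,\bar G]$ for each such $V$; in particular $\mu(V,G)=\mu(\bar V,\bar G)$. Moreover $g\in V\iff\bar g\in\bar V$, and, since $\Phi(G)\le VN$, one has $VN=G\iff\bar V\bar N=\bar G$, where $\bar N=N\Phi(G)/\Phi(G)$ is exactly the subgroup occurring on the right-hand side of the lemma. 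Substituting, the restricted sum becomes $\sum_{\bar V\le\bar G,\ \bar V\bar N=\bar G,\ \bar g\in\bar V}\mu(\bar V,\bar G)$. Applying the simplified formula of the first step now to the abelian (indeed elementary abelian) group $\bar G$, whose derived subgroup is again trivial, this last sum equals $|\bar N|\,m_{\bar G,\bar g}^{\bar N}$. Hence
\[
m_{G,g}^{N}=\frac{|\bar N|}{|N|}\,m_{\bar G,\bar g}^{\bar N},
\]
and the proof finishes by the index identity $|\bar N|=|N\Phi(G)|/|\Phi(G)|=|N|/|N\cap\Phi(G)|$.

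The argument is essentially a bookkeeping reduction and I do not expect a genuine obstacle; the only points needing a little care are the two compatibility statements for the Möbius function — the vanishing $\mu(V,G)=0$ when $\Phi(G)\not\le V$, and the interval isomorphism $[V,G]\cong[\bar V,\bar G]$ for $\Phi(G)\le V$ — both of which are standard facts about subgroup lattices of $p$-groups.
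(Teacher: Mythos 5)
Your proposal is correct and follows essentially the same approach as the paper: both compute $m_{G,g}^N$ from Theorem~\ref{thm:bisetactions}(e), restrict the Möbius sum to subgroups containing $\Phi(G)$, and compare with the analogous formula for $G/\Phi(G)$. The paper states this comparison without spelling out the vanishing $\mu(V,G)=0$ for $\Phi(G)\not\le V$, the interval isomorphism $[V,G]\cong[\bar V,\bar G]$, the equivalences $VN=G\iff\bar V\bar N=\bar G$ and $g\in V\iff\bar g\in\bar V$, or the index identity $|\bar N|=|N|/|N\cap\Phi(G)|$; you make all of these explicit, which is exactly what makes the terse comparison work.
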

\begin{proof}
    By Theorem \ref{thm:bisetactions} (e), the deflation number on the left hand side is given by
    \begin{eqnarray*}
m_{G, g}^{N} &=& \frac{1}{|N|}\sum_{\substack{V\le G\\ VN = G}} |V\cap \{g\}|\mu(V, G)\\
    &=& \frac{1}{|N|}\sum_{\substack{V\le G\\ VN = G, g\in V}} \mu(V, G)        
    \end{eqnarray*}
Similarly, the deflation number on the right hand side is
  \begin{eqnarray*}
    m_{G/\Phi(G), g\Phi(G)}^{N\Phi(G)/\Phi(G)}  &=& \frac{1}{|N\Phi(G)/\Phi(G)|}\sum_{\substack{\bar V\le \bar G\\ VN\Phi(G) = G,\, g\Phi(G)\in \bar V}} \mu(\bar V, \bar G)
    \end{eqnarray*}
 where $\bar V = V/\Phi(G)$ and $\bar G= G/\Phi(G)$.
    The result follows by comparing the two expressions.
\end{proof}
\begin{nothing}
    Finally we recall the evaluation of the deflation numbers for elementary abelian $p$-groups. Details can be found in \cite{CoYi}.
\end{nothing}
\begin{lemma}[\cite{CoYi}]\label{lem:defnoelem}
    Let $G$ be an elementary abelian $p$-group of rank $k$ and let $T\le G$ be a subgroup of order $p$. Also let $g\in G$. Then
    \[
    m_{G, g}^T = \begin{cases}
              \frac{1-p^{k-1}}{p} & \text{if } g = 1,\\
              \frac{1}{p} & \text{if } 1\not = g\in T\\
            \frac{1 - p^{k-2}}{p} & \text{if } g\not\in T.
          \end{cases} 
    \]
    
\end{lemma}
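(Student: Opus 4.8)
The statement I need to prove is Lemma \ref{lem:defnoelem}, which gives the deflation numbers for elementary abelian $p$-groups. Let me think about the approach.

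\textbf{Plan of proof.} The strategy is to apply the formula for the deflation number from Theorem \ref{thm:bisetactions}(e) directly. Since $G$ is elementary abelian of rank $k$, we have $G' = 1$, so $\mu(V,G)$ is the M\"obius function of the subgroup lattice of an elementary abelian $p$-group, which is well known: for $V \le G$ with $[G:V] = p^j$, one has $\mu(V,G) = (-1)^j p^{\binom{j}{2}}$. The deflation number becomes
\[
m_{G,g}^T = \frac{1}{|T|}\sum_{\substack{V \le G \\ VT = G,\ g \in V}} \mu(V,G) = \frac{1}{p}\sum_{\substack{V \le G \\ VT = G,\ g \in V}} \mu(V,G),
\]
using $|T| = p$ and the fact that $|V \cap \{g\}|$ is $1$ when $g \in V$ and $0$ otherwise. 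Since $|T| = p$, the condition $VT = G$ with $V \le G$ forces either $V = G$ or $V$ a hyperplane (index $p$) not containing $T$. So the sum runs over $V = G$ (contributing $\mu(G,G) = 1$) together with all hyperplanes $V$ with $T \not\le V$ and $g \in V$; each such hyperplane contributes $\mu(V,G) = -1$.

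\textbf{Counting the hyperplanes.} It remains to count hyperplanes $V$ of $G$ (viewed as an $\mathbb{F}_p$-vector space of dimension $k$) subject to $T \not\subseteq V$ and $g \in V$. I would split into the three cases. Case $g = 1$: the condition $g \in V$ is vacuous, so I count hyperplanes not containing the line $T$. The total number of hyperplanes is $\frac{p^k-1}{p-1}$, and those containing $T$ correspond to hyperplanes of $G/T$, of which there are $\frac{p^{k-1}-1}{p-1}$; subtracting gives $\frac{p^k - p^{k-1}}{p-1} = p^{k-1}$ hyperplanes with $T \not\subseteq V$. Hence $m_{G,1}^T = \frac{1}{p}(1 - p^{k-1}) = \frac{1-p^{k-1}}{p}$. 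Case $1 \ne g \in T$: if $g \in V$ then, since $g$ generates $T$, automatically $T \subseteq V$, contradicting $T \not\subseteq V$; so there are no hyperplanes in the sum and $m_{G,g}^T = \frac{1}{p}$. Case $g \notin T$ (and $g \ne 1$ is automatic, or handled by the $g=1$ case if $g=1$; here $g$ is a nonzero vector outside $T$): I count hyperplanes $V$ with $g \in V$ and $T \not\subseteq V$. Hyperplanes containing $g$ are hyperplanes of $G$ through a fixed line $\langle g\rangle$, equivalently hyperplanes of $G/\langle g\rangle$, numbering $\frac{p^{k-1}-1}{p-1}$. Among these, those that also contain $T$ correspond to hyperplanes of $G$ containing the $2$-dimensional subspace $\langle g\rangle \oplus T$ (the sum is direct since $g \notin T$ and both have prime order), equivalently hyperplanes of $G/(\langle g\rangle\oplus T)$, numbering $\frac{p^{k-2}-1}{p-1}$. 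Subtracting: $\frac{p^{k-1} - p^{k-2}}{p-1} = p^{k-2}$. Therefore $m_{G,g}^T = \frac{1}{p}(1 - p^{k-2}) = \frac{1-p^{k-2}}{p}$.

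\textbf{Main obstacle.} There is no serious obstacle; the proof is a routine lattice-counting argument once the M\"obius function of the elementary abelian subgroup lattice is invoked and the constraint $VT = G$ is recognized to restrict $V$ to $G$ or the non-containing hyperplanes. The only point requiring a little care is the bookkeeping in the third case — correctly identifying that the hyperplanes containing both $g$ and $T$ are exactly those containing the $2$-dimensional space spanned by them, which uses $g \notin T$. Since the lemma is quoted from \cite{CoYi}, I would in fact keep the write-up brief, citing the M\"obius function value and performing the three short counts above.
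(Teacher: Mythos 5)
Your proof is correct. The paper does not actually supply a proof of this lemma; it simply cites \cite{CoYi}. Your argument is the natural one: specialize the general deflation-number formula of Theorem \ref{thm:bisetactions}(e) using $G'=1$, observe that $VT=G$ forces $V=G$ or $V$ a hyperplane not containing $T$, and count the relevant hyperplanes in each of the three cases. The only minor remark is that you do not actually need the full M\"obius function formula $\mu(V,G)=(-1)^j p^{\binom{j}{2}}$; only $\mu(G,G)=1$ and $\mu(V,G)=-1$ for $V$ maximal in $G$ enter, since the constraint $VT=G$ with $|T|=p$ already restricts $V$ to codimension at most one. Your counting in the third case correctly identifies that hyperplanes containing both $g$ and $T$ are exactly those containing the two-dimensional subspace $\langle g\rangle\oplus T$, which is the point where $g\notin T$ is used, and the resulting counts $p^{k-1}$ and $p^{k-2}$ are right (including the degenerate boundary cases $k=1,2$ where the formulas remain consistent).
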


\section{Reduction to Restriction Kernel}\label{sec:reduc1}

In this section, we reduce the problem of determining the composition factors of \( \kerlin \), viewed as a \( p \)-biset functor, to analyzing the restriction kernels \( \mathcal{K}\kerlin(G) \). This is done in two steps. First we prove that Condition A holds for the functor $\scp$ and hence its composition factors are parameterized by the composition factors of restriction kernels of $\scp$. This is done in this section. Then in the rest of the paper, we evaluate restriction kernels explicitly. 
\begin{theorem}\label{thm:red2Rest}
    Let $G$ be a finite $p$-group which is non-trivial and not isomorphic to $C_p\times C_p$. Also let $x\in \kerlin(G)$. Then \[ x = x' + \alpha x\] for some $x'\in \mathcal K\kerlin(G)$ and some $\alpha\in I_G$.
\end{theorem}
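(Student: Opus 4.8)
The plan is to verify Condition A directly, using the idempotent $\varphi_1^G$ to reduce to the subspace $\delta_\Phi\scp(G)$. As noted in the remarks following Condition A, writing $x = \varphi_1^G x + (1-\varphi_1^G)x$ with $(1-\varphi_1^G)\in I_G$, it suffices to treat elements $u\in\delta_\Phi\scp(G)$. So first I would give an explicit description of $\delta_\Phi\scp(G)$: by the inclusions $\mathcal K\scp(G)\subseteq\delta_\Phi\scp(G)\subseteq\tilde e_G^G\scp(G)$ and Proposition 5.2 of \cite{Bouc2}, a basis of $\delta_\Phi\scp(G)$ consists of those idempotents $e_{H,h}^G$ (with $H\neq\langle h\rangle$, by Corollary \ref{cor:ker-ipot}) that are killed by all proper restrictions $\res^G_{H'}$ for $H'<G$ and by all deflations $\defl^G_{G/M}$ with $M\cap\Phi(G)\neq 1$. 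Using the restriction formula Theorem \ref{thm:bisetactions}(a) — which shows $\res^G_{H'}e_{H,h}^G$ is a sum of idempotents indexed by pairs $(H'',h'')$ that are $G$-conjugate to $(H,h)$ with $H''\le H'$ — the condition $\res^G_{H'}e_{H,h}^G=0$ for all $H'<G$ forces $H=G$. So $\delta_\Phi\scp(G)$ is spanned by the $e_{G,g}^G$ with $G\neq\langle g\rangle$ (in particular $G$ non-cyclic) that additionally survive the relevant deflations; by Theorem \ref{thm:bisetactions}(d) the deflation condition becomes $m_{G,g}^{G,M}=0$ for every $M\unlhd G$ with $M\cap\Phi(G)\neq 1$.

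Next, for such a surviving basis element $u=e_{G,g}^G$ I need to produce $\alpha\in I_G$ with $e_{G,g}^G=\alpha\cdot e_{G,g}^G$, i.e. exhibit $e_{G,g}^G$ inside the image of the "lower" maps. The natural candidate is a composite $\ind_K^G\circ(\text{something})\circ\res^G_K$ or an inflation-deflation composite through a proper quotient. Since $G\neq\langle g\rangle$, the element $g$ lies in some proper subgroup, but more usefully: because $e_{G,g}^G\in\delta_\Phi$ already survives the deflations with $M\cap\Phi(G)\neq 1$, I would try the deflation to $G/M$ for a well-chosen $M\unlhd G$ with $M\cap\Phi(G)=1$ and $|M|>1$ — such $M$ exists precisely because $G$ is not cyclic and not $C_p\times C_p$ (this is where that hypothesis enters: for $C_p\times C_p$ every nontrivial normal subgroup meets $\Phi=1$ trivially but has order $p$ and quotient $C_p$, and the relevant composite fails to reconstruct $u$; one must check the remaining small non-cyclic groups, e.g. via the structure of their subgroup lattices, admit a good $M$). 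For a good $M$, Theorem \ref{thm:bisetactions}(d) gives $\defl^G_{G/M}e_{G,g}^G=m_{G,g}^{G,M}e_{G/M,gM}^{G/M}$, and inflation Theorem \ref{thm:bisetactions}(c) followed by a correction term recovers $e_{G,g}^G$ up to the idempotents $e_{L,l}^G$ with $L<G$, which lie in $\tilde e_G^G\scp(G)$'s complement and hence can be absorbed; combining with Lemma \ref{lem:defnoFrat} and Lemma \ref{lem:defnoderived} to control the scalar $m_{G,g}^{G,M}$ (showing it is nonzero for the chosen $M$), the composite $\tfrac{1}{m}\infl_{G/M}^G\defl^G_{G/M}$ lies in $I_G$ and fixes $u$ modulo $I_G\cdot u$. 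Iterating or combining these corrections over a basis yields the claim.

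Concretely the steps are: (1) reduce to $u\in\delta_\Phi\scp(G)$ via $1-\varphi_1^G\in I_G$; (2) show $\delta_\Phi\scp(G)$ is spanned by suitable $e_{G,g}^G$ with $G$ non-cyclic, $G\neq\langle g\rangle$; (3) for $G$ not $C_p\times C_p$, choose $M\unlhd G$ with $1\neq M$, $M\cap\Phi(G)=1$, and $G/M$ still "large enough" that the inflation-deflation composite through $G/M$ reconstructs each $e_{G,g}^G$ up to terms indexed by proper subgroups; (4) verify the relevant deflation numbers are nonzero using Lemmas \ref{lem:defnoderived}, \ref{lem:defnoFrat}, \ref{lem:defnoelem} (and the defining survival property of $\delta_\Phi$), so that the composite lies in $I_G$; (5) absorb the proper-subgroup error terms, which lie in $I_G\cdot\scp(G)$. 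The main obstacle I anticipate is step (3)–(4): controlling exactly which $e_{G,g}^G$ survive into $\delta_\Phi\scp(G)$ and checking that for every such pair the chosen quotient $G/M$ genuinely recovers the idempotent with a nonzero coefficient — this requires a careful case analysis of the abelian-by-small structure of non-cyclic $p$-groups $\neq C_p\times C_p$, and is precisely the point where the excluded group $C_p\times C_p$ (treated separately, since there $\mathcal K\scp\neq 0$ and Condition A must be argued differently) is genuinely exceptional.
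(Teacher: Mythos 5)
Your opening reduction is correct and matches the paper: writing $x = \varphi_1^G x + (1-\varphi_1^G)x$ and noting $(1-\varphi_1^G)\in I_G$ reduces the problem to elements of $\delta_\Phi\kerlin(G)$, which are supported on the top idempotents $e_{G,g}^G$ since proper restrictions vanish. The idea of exhibiting a suitable $\alpha\in I_G$ as an inflation--deflation composite through a quotient $G/M$ with $M\cap\Phi(G)=1$ is also the right instinct. However, the execution as proposed has two genuine gaps.

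First, your claim that $\delta_\Phi\kerlin(G)$ has a basis consisting of individual surviving idempotents $e_{G,g}^G$ is false whenever $\Phi(G)\neq 1$. For any $M\le\Phi(G)$ with $M\neq 1$, the deflation number $m^M_{G,g}$ for an abelian $p$-group equals $1/|M|\neq 0$ (the only $V$ with $VM=G$ is $V=G$), so $\defl^G_{G/M}e_{G,g}^G\neq 0$ and no individual top idempotent lies in $\delta_\Phi\kerlin(G)$. The actual elements of $\delta_\Phi\kerlin(G)$ are specific linear combinations such as $E_g=\varphi_1^G\cdot e_{G,g}^G = e_{G,g}^G-\frac{1}{p}\sum_i e_{G,gz^i}^G$ (the paper's Equation~\eqref{eqn:phieee}), where the cancellation across the $z$-coset is essential. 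Working with the wrong basis means the rest of the argument is attacking a set that may well be empty.

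Second, even after correcting to the $E_g$'s, the plan of using a \emph{single} well-chosen $M$ with $M\cap\Phi(G)=1$ does not work. For $G=C_{p^m}\times C_p$ with $m\geq 2$, the only candidates are the $p$ subgroups $T_0,\dots,T_{p-1}$ of order $p$ not contained in $\Phi(G)$. For a single $T_j$, one finds $j^G_{T_j}\cdot E_g = E_g + \sum_{k=1}^{p-1}E_{gt_j^k}$; the ``error'' terms $E_{gt_j^k}$ lie in $\tilde e_G^G\kerlin(G)$ but generically not in $\mathcal K\kerlin(G)$, and they cannot be absorbed. The paper's key step is to \emph{sum over all $p$ of these subgroups}: the error terms across $j$ and $k$ cancel exactly (each idempotent appears with coefficient $1-\frac{1}{p}\cdot p=0$), yielding $E_g=\frac{1}{p}\sum_j j^G_{T_j}\cdot E_g$. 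This cancellation is the heart of the argument and is absent from your proposal. You would also need the separate case analysis the paper performs (non-abelian groups have $\delta_\Phi=0$; abelian groups without $C_p$-factors satisfy $\mathcal K=\delta_\Phi$; groups of rank $\geq 3$ or with two factors of order $\geq p^2$ are handled by choosing two independent order-$p$ subgroups and deriving $\alpha_g=0$), since the single-quotient argument has no purchase in those settings and the hypotheses differ substantially from case to case.
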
 
\begin{nothing}{\bf Outline of the Proof of Theorem \ref{thm:red2Rest}.} Let $G$ be a finite $p$-group. If it satisfies the theorem, we say that $G$ {\emph{satisfies condition (C)}}. We divide the proof of Theorem~\ref{thm:red2Rest} into several steps:
\begin{enumerate}
    \item \textbf{Reduction to abelian \( p \)-groups (Proposition~\ref{pro:nonAb}):} We first show that it suffices to consider abelian $p$-groups.
    \item \textbf{Analysis of groups with no \( C_p \)-factor (Proposition~\ref{pro:noCp}):} We prove that for such groups, the restriction kernel \( \mathcal{K}\kerlin(G) \) is zero unless \( G \) is cyclic.
    \item \textbf{Remaining cases:} We finally consider
    \begin{enumerate}
        \item groups having at least 2 cyclic factors of order $\ge p^2$ (Proposition \ref{pro:2Cpk}),
        \item groups having rank $\ge 3$ and at most one cyclic factor of order $\ge p^2$ (Proposition \ref{pro:1Cpk2Cp}),
        \item groups isomorphic to $C_{p^m}\times C_p$ with $m\ge 2$ (Proposition \ref{pro:1Cp1Cpk}) and
        \item finally the group $C_p\times C_p$ (Corollary \ref{cor:resker2}).
    \end{enumerate}   
\end{enumerate}
\end{nothing}
\begin{nothing}
The first step of the proof is to reduce the proof to elements satisfying $x = \varphi_1^G\cdot x$, that is, for $x\in\delta_\Phi\kerlin(G)$. This is done by Remark 2.6. To be more precise, we suppose $x\in \kerlin(G)$. By Corollary \ref{cor:ker-ipot}, we may write $x$ as \[ x = \sum_{(H, h): H\not = \langle h\rangle} \alpha_{H, h}e_{H, h}^G \] 
where the coefficients are determined by the equation $\alpha_{H, h}e_{H, h}^G = e_{H, h}^G\cdot x$.
On the other hand, we can write $$x = \varphi_1^G\cdot x + (1 - \varphi_1^G)\cdot x.$$ In this decomposition, $(1 - \varphi_1)\cdot x\in \mathcal I\kerlin (G)$.
Notice that $1-\varphi_1^G\in I_G$, so by putting $$y = \varphi_1^G\cdot x =x - (1 - \varphi_1)\cdot x$$ it is sufficient to prove the theorem for $y$ instead of $x$. Note also that $y = \varphi_1^G\cdot y$ and by \cite[Proposition 5.2]{Bouc2}, the element $y$ satisfies Equation (\ref{eqn:delta1}) and Equation (\ref{eqn:delta2}).
Furthermore, the element $y$ is contained in the span of the top idempotents, that is, idempotents of the form $e_{G, g}^G$ for some $g\in G$, that is, the coordinate-decomposition of $y$ in the primitive idempotent basis is given by
\begin{equation}\label{eqn:forY}
y = \sum_{(G, g): G\not = \langle g\rangle} \alpha_{g}e_{G, g}^G. 
\end{equation}
This is because restriction of $y$ to any proper subgroup of $G$ must be zero. As above, we have $\alpha_ge_{G, g}^G = e_{G, g}^G\cdot y$.
\end{nothing} 

\begin{proposition}\label{pro:nonAb}
    Let $G$ be a non-abelian $p$-group. Then $\delta_\Phi\kerlin(G) = 0$. In particular, $\mathcal K\kerlin(G)$ also equals to $0$ and $G$ satisfies condition (C). 
\end{proposition}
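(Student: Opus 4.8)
The plan is to show that any element $y \in \delta_\Phi\kerlin(G)$ must vanish when $G$ is non-abelian, using the explicit basis of $\kerlin(G)$ by primitive idempotents $e_{H,h}^G$ with $H \neq \langle h \rangle$ (Corollary), together with the vanishing conditions \eqref{eqn:delta1} and \eqref{eqn:delta2} characterizing $\delta_\Phi$. As observed in the paragraph preceding the proposition, the vanishing of all proper restrictions forces the support of $y$ to lie among the ``top'' idempotents $e_{G,g}^G$ with $G \neq \langle g \rangle$; so I would write $y = \sum_{g:\,G\neq\langle g\rangle}\alpha_g e_{G,g}^G$ as in \eqref{eqn:forY} and aim to prove every $\alpha_g = 0$.

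First I would exploit condition \eqref{eqn:delta2}: for every $M \unlhd G$ with $M \cap \Phi(G) \neq 1$, we need $\defl^G_{G/M} y = 0$. Since $G$ is non-abelian, $G' \neq 1$ and $G' \le \Phi(G)$, so in particular $M = G'$ is an allowed normal subgroup (as is any $M \unlhd G$ containing a nontrivial part of $\Phi(G)$). By Theorem~\ref{thm:bisetactions}(d), $\defl^G_{G/M} e_{G,g}^G = m_{G,g}^{G,M}\, e_{G/M,\, gM}^{G/M}$. The key point is that $e_{G,g}^G$ lies in $\ker(\ling)$ precisely because $G \neq \langle g\rangle$; after deflating to $G/M$, the image idempotent $e_{GM/M,\,gM}^{G/M} = e_{G/M,\,gM}^{G/M}$ has ambient group $G/M$ equal to its first coordinate, and such idempotents are \emph{not} in $\ker(\ling(G/M))$ unless $G/M = \langle gM \rangle$. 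Therefore, grouping the terms of $\defl^G_{G/M} y$ by the $G/M$-conjugacy class of $gM$, the vanishing of $\defl^G_{G/M} y$ in $\MonG[G/M]$ forces, for each $g$ with $G/M \neq \langle gM\rangle$, the relation $\sum_{g' \sim g}\alpha_{g'} m_{G,g'}^{G,M} = 0$, while for $g$ with $G/M = \langle gM\rangle$ the corresponding idempotent survives into a nonzero element of $\kerlin(G/M)^{\perp}$... — here one must use that $y \in \kerlin(G)$ maps into $\kerlin(G/M)$, so actually the deflated element lies in $\kerlin(G/M)$ and its $e_{G/M,gM}^{G/M}$-coordinate must be zero too; combining, \emph{all} coordinates of $\defl^G_{G/M}y$ vanish, giving $\sum_{g'\sim_{G/M} g}\alpha_{g'} m_{G,g'}^{G,M} = 0$ for every class.

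Taking $M = G'$ and invoking Lemma~\ref{lem:defnoderived}, all deflation numbers $m_{G,g}^{G'}$ equal $1$, so this relation becomes $\sum_{g' \equiv g \pmod{G'}} \alpha_{g'} = 0$ for every coset $gG'$ with $G/G' \neq \langle gG'\rangle$ — and for the cosets $gG'$ generating $G/G'$ one argues separately that the deflation lands in $\kerlin(G/G')$ whose $e_{G/G',gG'}^{G/G'}$-coordinate is zero since $G/G' = \langle gG'\rangle$ is excluded from the kernel basis, forcing those sums to vanish as well. This pins down the coset-sums of the $\alpha_g$. To conclude I would then play this off against condition \eqref{eqn:delta1} and the restriction formula Theorem~\ref{thm:bisetactions}(a): restricting $y$ to a proper subgroup $H$ containing a representative $g$ of a given $G'$-coset expresses $\alpha_g$-type data in terms of lower idempotents, and combined with the coset relations above (and the fact that $G$ non-abelian guarantees $G' \neq 1$, so the cosets genuinely collapse information) one derives $\alpha_g = 0$ for all $g$. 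The main obstacle I anticipate is the bookkeeping in this last step: correctly tracking which $e_{H,h}^K$ appear in each $\res^G_H e_{G,g}^G$ and ensuring the linear system forced by \eqref{eqn:delta1}–\eqref{eqn:delta2} has only the trivial solution — in particular handling the cosets $gG'$ that generate $G/G'$, where the relevant idempotents drop out of the kernel, requires care. Once $y = 0$ for all such $y$, we get $\delta_\Phi\kerlin(G) = 0$, hence $\mathcal{K}\kerlin(G) \subseteq \delta_\Phi\kerlin(G) = 0$, and $G$ satisfies condition (C) vacuously.
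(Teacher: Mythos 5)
Your outline starts in the right place (work with $y=\sum_{g:\,G\neq\langle g\rangle}\alpha_g e_{G,g}^G$, deflate by $G'\le\Phi(G)$, use Lemma~\ref{lem:defnoderived} to see $m_{G,g}^{G'}=1$), but it stalls at the decisive point and substitutes an unnecessary and unexecuted extra argument. The key fact you are missing is how the idempotents are actually indexed: by definition $\mathcal N(G)=\{(H,h):H\le G,\ hH'\in H/H'\}$, so for $H=G$ the second coordinate already lives in $G/G'$, and $G$-conjugation acts trivially on $G/G'$. Hence distinct basis elements $e_{G,g}^G$ correspond to \emph{distinct cosets} $gG'$, and consequently $\defl^G_{G/G'}$ sends them to the pairwise distinct (hence linearly independent) idempotents $e_{G/G',gG'}^{G/G'}$ with coefficient $1$. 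No ``coset sums'' ever occur: the linear equations coming from $\defl^G_{G/G'}y=0$ are simply $\alpha_g=0$ for each $g$. So the deflation by $G'$ alone kills $y$; the further step you propose, playing this off against $\eqref{eqn:delta1}$ and $\res^G_H$ formulas, is both unnecessary and, as you admit, not carried out. As it stands the argument is incomplete.

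A second, smaller slip: you worry separately about cosets with $G/G'=\langle gG'\rangle$. These cannot occur in the support of $y$: since $G'\le\Phi(G)$ for a $p$-group, $G/G'=\langle gG'\rangle$ forces $G=\langle g\rangle G'=\langle g\rangle$, which is precisely the excluded case. Noting this removes the case split and the awkward handling of ``idempotents dropping out of the kernel basis.''
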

\begin{proof} Let $y\in\delta_\Phi\kerlin(G)$ be as in Equation (\ref{eqn:forY}).
    Since $\{1\} \not =G'\subseteq \Phi(G)$, by Equation (\ref{eqn:delta2}), we must have $\defl^G_{G/G'} y = 0$. On the other hand, for any $gG'\in G/G'$ we have
\[
\defl^G_{G/G'} e_{G, g}^G = e_{G/G', gG'}^{G/G'}
\]
since in this case $m_{G, g}^{G'} = 1$ by Lemma \ref{lem:defnoderived}. 
Moreover for any $g,h\in G$, we have
\[
e_{G, g}^G = e_{G, h}^G \text{ if and only if } e_{G/G', gG'}^{G/G'} = e_{G/G', hG'}^{G/G'}
\]
Therefore deflation of $y$ to $G/G'$ is equal to
\[
\defl^G_{G/G'} y = \sum_{(G/G', gG')} \alpha_{g}e_{G/G', gG'}^{G/G'} 
\]
Now it is clear that the above sum is 0 if and only if all the coefficients $\alpha_g$ are zero, that is, if and only if $y = 0$. Thus $\delta_\Phi\kerlin(G) = 0$, as required. The rest follows easily since $\mathcal{K}\kerlin(G)\subseteq \delta_\Phi\kerlin(G)$ \end{proof} 
\begin{nothing} For the rest of the proof, we assume that $G$ is an abelian $p$-group. Using the classification of finite abelian groups, we write and fix a decomposition of $G$ into a product of cyclic groups as follows. 
\begin{equation}\label{eqn:decompG}
G = C_{p^{r_1}}\times C_{p^{r_2}}\times \ldots\times C_{p^{r_k}}
\end{equation}
where the integers $r_i$ are chosen so that $1\le r_1\le r_2\le \ldots\le r_k$. 
\end{nothing}
\begin{proposition}\label{pro:noCp}
    Let $G$ be as in Equation (\ref{eqn:decompG}). Suppose $G$ has no direct factor isomorphic to $C_p$. Then

    \smallskip

    \noindent \textup{(i)} $G$ satisfies condition (C),

    \smallskip
    
    \noindent \textup{(ii)} $\mathcal K\kerlin(G) = 0$ if and only if $G$ is non-cyclic.
\end{proposition}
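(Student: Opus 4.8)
Write $G=C_{p^{r_1}}\times\cdots\times C_{p^{r_k}}$; the hypothesis that $G$ has no direct factor isomorphic to $C_p$ means $r_i\ge2$ for every $i$. The whole argument hinges on one observation: since each $r_i\ge2$, the order-$p$ subgroup of the factor $C_{p^{r_i}}$ is generated by a $p$-th power, so every element of $G$ of order dividing $p$ lies in $\Phi(G)=G^p$; consequently \emph{every} nontrivial subgroup $M\le G$ satisfies $M\cap\Phi(G)\neq1$. The first step is to use this to prove $\mathcal K\kerlin(G)=\delta_\Phi\kerlin(G)$ and to deduce condition (C). By \cite{Bouc2}, every $u\in\delta_\Phi\kerlin(G)$ satisfies $\res^G_H u=0$ for all $H<G$ (Equation~\eqref{eqn:delta1}) and, by the observation, $\defl^G_{G/M}u=0$ for \emph{all} nontrivial $M\unlhd G$ (Equation~\eqref{eqn:delta2}). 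Since a transitive $(H\times G)$-biset with $|H|<|G|$ factors (\cite{Bouc}) through a group of order $<|G|$, the associated map $\kerlin(G)\to\kerlin(H)$ has the form $(\cdots)\circ\defl^{G_0}_{G_0/M}\circ\res^G_{G_0}$ with $|G_0/M|<|G|$, which forces $G_0<G$ or $M\neq1$; in either case it annihilates $u$. Hence $\delta_\Phi\kerlin(G)\subseteq\mathcal K\kerlin(G)$, and as the reverse inclusion always holds, the two agree. Condition (C) is then immediate: for $x\in\kerlin(G)$, in the decomposition $x=\varphi_1^G x+(1-\varphi_1^G)x$ the first term lies in $\delta_\Phi\kerlin(G)=\mathcal K\kerlin(G)$ and $1-\varphi_1^G\in I_G$.

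The second step is to compute $\mathcal K\kerlin(G)=\delta_\Phi\kerlin(G)$ explicitly. By the reduction around Equation~\eqref{eqn:forY}, a general element is $y=\sum_g\alpha_g\,e_{G,g}^G$, the sum over those $g$ with $\langle g\rangle\neq G$ — all of $G$ when $G$ is non-cyclic, and all of $\Phi(G)$ when $G\cong C_{p^r}$. For a subgroup $M$ of order $p$ one has $M\subseteq\Phi(G)$, so the image of $M$ in $G/\Phi(G)$ is trivial and Lemma~\ref{lem:defnoFrat} gives $m_{G,g}^M=1/p$ for every $g$; therefore
\[
\defl^G_{G/M}y=\frac1p\sum_{\bar g\in G/M}\Bigl(\,\sum_{gM=\bar g}\alpha_g\Bigr)\,e_{G/M,\bar g}^{G/M}.
\]
Thus $y\in\delta_\Phi\kerlin(G)$ forces the push-forward of the function $g\mapsto\alpha_g$ along $G\to G/M$ to vanish for every order-$p$ subgroup $M$, and a routine check shows that the deflation conditions for larger $M$ and for the trivial quotient impose nothing further. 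By Pontryagin duality for the abelian group on which $\alpha$ is defined, this is equivalent to $\widehat\alpha(\chi)=0$ for every character $\chi$ trivial on some subgroup of order $p$, i.e.\ for every non-faithful $\chi$.

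Finally one reads off the dichotomy. If $G$ is non-cyclic then no character of $G$ is faithful, so $\widehat\alpha\equiv0$, hence $\alpha=0$ and $\mathcal K\kerlin(G)=0$. If $G\cong C_{p^r}$ with $r\ge2$, then $\alpha$ is a function on $\Phi(G)\cong C_{p^{r-1}}$, which being cyclic admits faithful characters; choosing $\alpha$ with Fourier transform supported on one of them produces a nonzero element of $\kerlin(G)$ satisfying all restriction- and deflation-vanishing conditions, so $\mathcal K\kerlin(G)\neq0$ (indeed of dimension $p^{r-2}(p-1)$). The conceptual core is short — the Frattini observation together with the elementary fact that a non-cyclic abelian $p$-group has no faithful character — so I expect the real work to be the bookkeeping in the middle paragraph: pinning down exactly which deflation conditions are binding, and confirming that the element constructed in the cyclic case genuinely lies in $\delta_\Phi\kerlin(G)$.
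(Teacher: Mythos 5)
Your proof is correct, and the key non-cyclic step takes a genuinely different route from the paper's. The first step (Frattini observation, $\mathcal K\kerlin(G)=\delta_\Phi\kerlin(G)$, condition (C)) matches the paper. For the vanishing when $G$ is non-cyclic, the paper works combinatorially: it picks two distinct minimal subgroups $T=\langle t\rangle$ and $U=\langle u\rangle$, obtains the relation $\sum_{i=0}^{p-1}\alpha_{gt^i}=0$ from deflation by $T$, repeats for the subgroups $\langle ut^i\rangle$, and then uses the set identity $\{u^jt^i\}=\{(ut^i)^j\}$ to sum these relations and deduce $p\cdot\alpha_g=0$, a contradiction. You instead repackage the order-$p$ deflation conditions as the vanishing of the push-forwards of $g\mapsto\alpha_g$ along $G\to G/M$, translate via Fourier transform to $\widehat\alpha(\chi)=0$ for all non-faithful $\chi$, and then invoke the elementary fact that a non-cyclic abelian $p$-group has no faithful linear character. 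This is cleaner and more conceptual, and it has the extra benefit that the cyclic case falls out of the same analysis (faithful characters of $\Phi(G)\cong C_{p^{r-1}}$ parameterize a $p^{r-2}(p-1)$-dimensional space, agreeing with the dimension the paper obtains later in Section 5), whereas the paper postpones the cyclic direction to Section 5. The "routine check" you defer — that deflation along larger $M$ and the trivial-quotient condition add nothing — is indeed routine: the former follows from transitivity of deflation through an order-$p$ subgroup, and the latter is vacuous because $\kerlin(1)=0$.
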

\begin{proof}
 Since $G$ has no $C_p$-factor, the Frattini subgroup $\Phi(G)$ of $G$ can be written as a product of maximal subgroups of its direct factors, that is,
$$\Phi(G) \cong C_{p^{r_1-1}}\times C_{p^{r_2-1}}\times \ldots\times C_{p^{r_k-1}}.$$
It is clear from this decomposition that
it intersects all non-trivial subgroups of $G$ non-trivially. In particular, by Equation (\ref{eqn:delta2}) above, we conclude that for any $y\in\delta_\Phi\kerlin(G)$, we have $\defl_{G/H}^G y = 0$ for any $1\not = H\le G$ and hence $y\in \mathcal K\kerlin(G)$. In particular $\mathcal{K}\kerlin(G) = \delta_\Phi\kerlin(G)$ and $G$ satisfies condition (C). 

Next we prove that $\mathcal K\kerlin(G) = 0$ if $k$, the number of cyclic factors, is at least two. Hence suppose $G$ is non-cyclic and let $T=\langle t \rangle$ be a minimal subgroup of $G$. Let $y\in\mathcal K\kerlin(G)$ be as in Equation (\ref{eqn:forY}). For $g\in G$ consider the sum
\[
y_{T, g}:= \sum_{i=0}^{p-1} \alpha_{gt^i}e_{G, gt^i}^G. 
\] It is clear that 
\begin{enumerate}
    \item[(a)] $\defl_{G/T}^G y_{T, g} = m\cdot e_{G/T, gT}^{G/T}$ for some scalar $m$ and
    \item[(b)] $e_{G/T, gT}^{G/T}\cdot \defl_{G/T}^G (y -y_{T, g}) = 0$
\end{enumerate}
In other words, 
\[
\defl^G_{G/T} y = m\cdot e_{G/T, gT}^{G/T}  + \text{ terms not containing } e_{G/T, gT}^{G/T}
\]
for some scalar $m$. Hence, since $\defl_{G/T}^G y =0$, we must have that $m=0$. We calculate $m$ as follows. By the explicit sum in Theorem \ref{thm:bisetactions} (e), we have $m_{G, gt^i}^T = 1/p$ and hence \[ \defl_{G/T}^G e_{G, gt^i}^G = \frac{1}{p}e_{G/T, gT}^{G/T}. \]
Therefore $m = (\sum_{i=0}^{p-1}\alpha_{gt^i})/p$. Since $m$ must be zero, we obtain the following equation:
\[
\alpha_g + \alpha_{gt} + \ldots \alpha_{gt^{p-1}} =0
\]
or equivalently 
\begin{equation}\label{eqn:alphag}
    -\alpha_g = \alpha_{gt} + \ldots \alpha_{gt^{p-1}} = \sum_{j=1}^{p-1}\alpha_{gt^j}
\end{equation}

Let $u\in G$ be another element of order $p$, the above calculations give
\[
\sum_{i=0}^{p-1} \alpha_{gu^jt^i} = 0
\]
for each $j=1, \ldots, p-1$. Summing these equations over $j$, we get
\[
\sum_{j=1}^{p-1}\sum_{i=0}^{p-1} \alpha_{gu^jt^i} = 0.
\]
Since the set
\[
\{ u^jt^i\mid i=0, 1, \ldots, p-1; j=1, 2,\ldots, p-1 \}\]
is equal to the set 
\[\{ (ut^i)^j \mid i=0, 1, \ldots, p-1; j=1, 2,\ldots, p-1 \} 
\]
we may rearrange the sum as
\[
\sum_{i=0}^{p-1}\Big( \sum_{j=1}^{p-1} \alpha_{g(ut^i)^j}\Big) = 0.
\]
By Equation (\ref{eqn:alphag}), the inner sum is independent of $i$ and is equal to $-\alpha_g$. Thus we get $p\cdot \alpha_g = 0$.
Hence  for each $g\in G$, $\alpha_g =0$, that is, $y = 0$. In particular, $\mathcal{K}\kerlin(G) = 0$, as required. Finally if $G$ is cyclic, then $\mathcal K\kerlin(G)\neq 0$, as we will see in Section \ref{sec:restker}.
\end{proof}

\begin{proposition}\label{pro:2Cpk}
     Let $G$ be as in Equation (\ref{eqn:decompG}). If $G$ has at least 2 direct factors of order $\ge p^2$, then $\mathcal K\kerlin(G) = 0$. In particular $G$ satisfies condition (C). 
     \end{proposition}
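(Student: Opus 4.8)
The plan is to prove the stronger fact that $\delta_\Phi\kerlin(G) = 0$; since $\mathcal{K}\kerlin(G)\subseteq\delta_\Phi\kerlin(G)$ this gives $\mathcal{K}\kerlin(G) = 0$, and Condition~(C) then follows exactly as in Proposition~\ref{pro:nonAb} (with $x' = 0$ and $\alpha = 1-\varphi_1^G\in I_G$). Observe first that the hypothesis forces $G$ to be non-cyclic of rank at least $2$, so $\langle g\rangle\ne G$ for every $g\in G$ and an element $y\in\delta_\Phi\kerlin(G)$ is of the form in \eqref{eqn:forY}, i.e.\ $y = \sum_{g\in G}\alpha_g e_{G,g}^G$. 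The goal is then to show that every $\alpha_g$ vanishes.

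The key point is that, because $G$ has at least two cyclic direct factors of order $\ge p^2$, the Frattini subgroup $\Phi(G)\cong C_{p^{r_1-1}}\times\cdots\times C_{p^{r_k-1}}$ has rank at least $2$ and therefore contains a subgroup $P\cong C_p\times C_p$; write $P = \langle t\rangle\times\langle u\rangle$. Then each of the $p+1$ order-$p$ subgroups of $P$, namely $\langle t\rangle$ and $\langle ut^i\rangle$ for $0\le i\le p-1$, lies inside $\Phi(G)$. For any order-$p$ subgroup $S\le\Phi(G)$ I would first check, using Theorem~\ref{thm:bisetactions}(e), that $m_{G,g}^S = \tfrac1p$ for every $g$: since $G$ is abelian and $S\le\Phi(G)$, the equation $VS = G$ forces $V = G$, leaving only the term $\mu(G,G) = 1$. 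Hence $\defl^G_{G/S}e_{G,g}^G = \tfrac1p\,e_{G/S,gS}^{G/S}$, distinct cosets of $S$ give distinct idempotents of $G/S$, and since $S\cap\Phi(G) = S\ne 1$, condition~\eqref{eqn:delta2} yields $\defl^G_{G/S}y = 0$; collecting coordinates by cosets of $S$ this reads $\sum_{s\in S}\alpha_{gs} = 0$ for all $g$, equivalently $-\alpha_g = \sum_{1\ne s\in S}\alpha_{gs}$.

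With these relations available I would run the counting argument of Proposition~\ref{pro:noCp} unchanged: assume $\alpha_g\ne 0$ for some $g$; the relation for $\langle ut^i\rangle$ gives $-\alpha_g = \sum_{j=1}^{p-1}\alpha_{g(ut^i)^j}$ for each $i$, and summing over $i = 0,\dots,p-1$, then using the set identity $\{(ut^i)^j\} = \{u^jt^i\}$ and the relation for $\langle t\rangle$ applied to the cosets $gu^j\langle t\rangle$, one obtains $p\,\alpha_g = 0$, a contradiction. Hence $y = 0$, so $\delta_\Phi\kerlin(G) = 0$, and the proposition follows.

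I do not expect a genuine difficulty here: the argument is a localization to $\Phi(G)$ of the proof of Proposition~\ref{pro:noCp}. The only step that needs care — and precisely where the hypothesis of two factors of order $\ge p^2$ is used — is arranging that all the order-$p$ subgroups in play lie inside $\Phi(G)$, which simultaneously makes every relevant deflation number equal $1/p$ (so the relations take the clean summing form) and guarantees that each deflation used vanishes on $\delta_\Phi\kerlin(G)$ via \eqref{eqn:delta2}.
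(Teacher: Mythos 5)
Your proof is correct and takes essentially the same approach as the paper's, which simply says to choose $T,U\le\Phi(G)$ of order $p$ with $T\cap U=1$ and repeat the counting argument of Proposition~\ref{pro:noCp}. You spell out the one point the paper leaves implicit: since $G$ may now have $C_p$ direct factors, $\mathcal K\kerlin(G)$ need not equal $\delta_\Phi\kerlin(G)$, so to get condition~(C) one should actually prove $\delta_\Phi\kerlin(G)=0$, which works because the deflations through $T,U\le\Phi(G)$ vanish on all of $\delta_\Phi\kerlin(G)$ via~\eqref{eqn:delta2} and the relevant deflation numbers are all $1/p$.
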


\begin{proof}
    Let $T, U\le \Phi(G)$ be minimal subgroups of $G$ such that $T\cap U =1$. Repeating the argument in the previous proof with these choices of $T$ and $U$, we arrive at the same conclusion that $\mathcal K\kerlin(G) = 0$. With this at hand, the second claim holds trivially. 
\end{proof}
\begin{proposition}\label{pro:1Cpk2Cp}
    Let $G$ be as in Equation (\ref{eqn:decompG}). Assume that $k\ge 3$ and also that $G$ has at most one cyclic factor of order at least $p^2$. Then $\mathcal K\kerlin(G) = 0$. In particular $G$ satisfies condition (C).
\end{proposition}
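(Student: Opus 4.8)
\emph{Setup.} The plan is to adapt the two-step method of Propositions~\ref{pro:noCp} and~\ref{pro:2Cpk}. By the reduction leading to~(\ref{eqn:forY}) it suffices to take $y\in\delta_\Phi\kerlin(G)$ written as $y=\sum_{g\in G}\alpha_g e_{G,g}^G$ (the sum over all $g$, since $G$ is non-cyclic) and prove $y=0$; for such a $y$ every proper restriction already vanishes by Theorem~\ref{thm:bisetactions}(a), so only the deflation conditions bear on the $\alpha_g$. Writing $G\cong C_p^{\,k-1}\times C_{p^m}$ with $k\ge 3$, the new feature is that $\Phi(G)$ is cyclic (or trivial), so it contains at most one minimal subgroup; the argument must therefore also use deflations by order-$p$ subgroups lying \emph{outside} $\Phi(G)$, whose deflation numbers, by Lemmas~\ref{lem:defnoFrat} and~\ref{lem:defnoelem}, are those of the elementary abelian quotient $G/\Phi(G)$. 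I would handle $m=1$ and $m\ge 2$ separately. Once $\mathcal K\kerlin(G)=0$ is established, that $G$ satisfies condition (C) follows as in Proposition~\ref{pro:2Cpk}.

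\emph{The elementary abelian case $m=1$.} Here $\Phi(G)=1$ and $\defl^G_{G/T}y=0$ for every order-$p$ subgroup $T$. Expanding with Lemma~\ref{lem:defnoelem} and comparing, for each coset $\bar g$ of $T$, the coefficient of $e^{G/T}_{G/T,\bar g}$, one extracts $\sum_{h\in T}\alpha_h=p^{\,k-1}\alpha_1$ for every $T$, and — because $k\ge 3$ makes $1-p^{\,k-2}\ne 0$ — $\sum_{h\in L}\alpha_h=0$ for every affine line $L$ not containing $1$. Summing the second relation over the pencil of such lines through a fixed $g_0\ne 1$ and using the first on $\langle g_0\rangle$ shows $\alpha_{g_0}$ is independent of $g_0\ne 1$; evaluating on any line avoiding $1$ forces that common value to be $0$, and the first relation then gives $\alpha_1=0$. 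Hence $y=0$.

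\emph{The mixed case $m\ge 2$.} Put $\Phi=\Phi(G)$ (cyclic of order $p^{m-1}$), let $T\le\Phi$ be its unique minimal subgroup, and write $\pi\colon G\to\bar G:=G/\Phi$; then $\bar G$ is elementary abelian of rank $k$. From $\defl^G_{G/T}y=0$ (Equation~(\ref{eqn:delta2})) and $m^T_{G,g}=1/p$ one gets that the $\alpha$'s sum to $0$ on every $T$-coset. For each order-$p$ subgroup $U\not\le\Phi$ one has $\defl^G_{G/U}y=0$ (now using $y\in\mathcal K\kerlin(G)$) and $m^U_{G,g}=m^{\pi(U)}_{\bar G,\pi(g)}$, whence, using $1-p^{\,k-2}\ne 0$, the $\alpha$'s sum to $0$ on every $U$-coset not contained in $U\Phi$ and sum to $p^{\,k-1}\alpha_{g_0}$ on the coset $g_0U$ for each $g_0\in\Phi$. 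I would finish in two steps. For $g_0\in\Phi$: summing the last relation over the $p$ order-$p$ subgroups $U\le\Omega_1(G)$ ($\Omega_1(G)$ the subgroup of elements of order dividing $p$) that share a fixed image in $\bar G$, and cancelling the overlaps via the $T$-coset relations, leaves $p\,\alpha_{g_0}=p^{k}\alpha_{g_0}$, so $\alpha_{g_0}=0$. For $g\notin\Phi$: since $\Omega_1(G)$ has rank $k\ge 3$, one can choose $u\in\Omega_1(G)$ of order $p$ with $\langle\pi(u)\rangle\ne\langle\pi(g)\rangle$ (hence $\langle u\rangle\cap T=1$); then each $\langle ut^i\rangle$ lies outside $\Phi$ with image $\ne\langle\pi(g)\rangle$, giving $-\alpha_g=\sum_{v\ne 1,\,v\in\langle ut^i\rangle}\alpha_{gv}$, and feeding this together with the $T$-coset relations into the computation in the proof of Proposition~\ref{pro:noCp} (legitimate since $\langle u\rangle\cap T=1$, and using the $\langle ut^i\rangle$-relation only at $g$ and the $T$-relation at the points $gu^j$) yields $p\,\alpha_g=0$. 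Hence $y=0$ and $\mathcal K\kerlin(G)=0$.

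\emph{Main obstacle.} The hard part is the mixed case: with only the single symmetric deflation $\defl^G_{G/T}$ available inside the cyclic Frattini subgroup, one must exploit the asymmetric deflations $\defl^G_{G/U}$, $U\not\le\Phi$, and the care lies in organizing their two kinds of coordinate relations, in separating $g\in\Phi$ from $g\notin\Phi$, and in choosing the auxiliary subgroup $\langle u\rangle$ so as to dodge the single offending direction $\langle\pi(g)\rangle$. The hypothesis $k\ge 3$ enters twice essentially: it makes $1-p^{\,k-2}\ne 0$ (so the off-diagonal relations have content) and leaves enough room in $\Omega_1(G)$ for the choice of $u$; both fail at $k=2$, which is exactly why $C_p\times C_p$ and $C_{p^m}\times C_p$ are treated separately.
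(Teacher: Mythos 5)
Your proof of $\mathcal K\kerlin(G)=0$ is correct, but it takes a genuinely different route from the paper's. The paper fixes from the start two order-$p$ subgroups $T,U$ with $(T\times U)\cap\Phi(G)=1$ (both lying \emph{outside} the Frattini subgroup), treats $m=1$ and $m\ge 2$ uniformly, and then runs a four-way case analysis on $g$ (namely $g\in\Phi(G)$; $\langle g\rangle$ maximal cyclic; $o(g)=p$ with $g\notin\Phi(G)$; the remaining $g\notin\Phi(G)$), each time re-running the Proposition~\ref{pro:noCp} argument with the appropriate deflation numbers from Lemmas~\ref{lem:defnoFrat} and~\ref{lem:defnoelem}. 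You instead split according to whether $\Phi(G)$ is trivial. For $\Phi(G)=1$ you give a pencil-of-lines counting argument on $G/\Phi(G)$ that dispatches the elementary abelian case without any case analysis on $g$; this is arguably cleaner than the paper's route. For $\Phi(G)\neq 1$ your key structural choice is the opposite of the paper's: you take $T$ to be the unique minimal subgroup \emph{inside} $\Phi(G)$ (so all $T$-deflation numbers are $1/p$) and a single auxiliary $U\not\le\Phi(G)$, then for $g_0\in\Phi(G)$ you sum the $U$-coset relations over the $p$ preimage subgroups of $\bar U$ in $\Omega_1(G)$ and cancel against the $T$-coset relations, while for $g\notin\Phi(G)$ you feed a well-chosen $u$ (with $\langle\pi(u)\rangle\neq\langle\pi(g)\rangle$) into the Proposition~\ref{pro:noCp} machine, using the $T$-relations at the points $gu^j$ and the $\langle ut^i\rangle$-relations at $g$. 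I checked the deflation-number computations, the multiset rearrangement $\{u^jt^i:j\neq 0\}=\{(ut^i)^j:j\neq 0\}$ (valid since $\langle u\rangle\cap T=1$), and the preimage-sum cancellation; they all go through, and the hypothesis $k\ge 3$ enters exactly where you say it does. The paper's version is shorter by reusing the Proposition~\ref{pro:noCp} template wholesale; yours is more explicit about which deflations supply which relations, and your choice of $T\le\Phi(G)$ makes half the relations automatically satisfied, simplifying the bookkeeping.

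One small caveat: your final sentence says condition (C) then ``follows as in Proposition~\ref{pro:2Cpk}.'' In Propositions~\ref{pro:noCp} and~\ref{pro:2Cpk} condition (C) is immediate because the deflations used there (at subgroups of $\Phi(G)$) already vanish on all of $\delta_\Phi\kerlin(G)$, so the argument in fact shows $\delta_\Phi\kerlin(G)=0$. Here, both your proof and the paper's use deflations at subgroups $U\not\le\Phi(G)$, which only vanish on $\mathcal K\kerlin(G)$; so one obtains $\mathcal K\kerlin(G)=0$ but not $\delta_\Phi\kerlin(G)=0$, and condition (C) (i.e.\ $\delta_\Phi\kerlin(G)\subseteq I_G\kerlin(G)$) is not literally ``as in'' Proposition~\ref{pro:2Cpk}. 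The paper is equally terse at this point, so I would not call this a gap you introduced, but the phrase ``as in Proposition~\ref{pro:2Cpk}'' slightly misstates what remains to be checked.
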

\begin{proof} To begin with, notice that the number of $C_p$-factors of $G$ is at least 2 and that makes the proof different from the above two cases since the Frattini subgroup is either trivial or has a unique subgroup of order $p$. Let $T, U$ be subgroups of $G$ of order $p$ such that $(T\times U)\cap \Phi(G) = 1$. We want to repeat the argument in Proposition \ref{pro:noCp} with these choices of $T$ and $U$. The only difference is in the deflation numbers. Let $g\in G$ be an element such that $\alpha_g\not = 0$. We divide the calculation into four cases:
\begin{enumerate}
    \item[] \textbf{Case (1):} Let $g\in \Phi(G)$. Then $gx\in\langle x\rangle\Phi(G)$ for any $x\in T\times U$. Hence by Lemma \ref{lem:defnoFrat} and Lemma \ref{lem:defnoelem}, we obtain
    \[
    p\cdot m_{G, h}^{\langle x\rangle} = \begin{cases}
              1 & \text{if } h = gx^{i}, 1\le i\le p-1\\
            1 - p^{k-1} & \text{if } h = g.
          \end{cases} 
    \]
    In particular Equation (\ref{eqn:alphag}) becomes
    \begin{equation}\label{eqn:alphag2}
        (p^{k-1}-1)\alpha_g = \alpha_{gx} + \ldots \alpha_{gx^{p-1}} 
    \end{equation}
    The rest of the previous argument applies and we obtain the equality $(p^{k-1}-1)\alpha_g = 0$. Once more, since $\alpha_g$ is nonzero and additionally $k\ge 3$, we arrived at a contradiction. Hence $\alpha_g = 0$ which also implies that $y = 0$, completing the proof of this case. 
    \item[] \textbf{Case (2):} If $|\langle g\rangle| = p^{r_k}$, then $g\not \in\Phi(G)$ and $gx\not\in\Phi(G)$ for any $x\in T\times U$. Therefore $m_{G, gx}^{\langle x\rangle}=1/p$ and the argument in Proposition \ref{pro:noCp} applies directly.
    
    \item[] \textbf{Case (3):} Finally if $g\not\in\Phi(G)$ and $o(g) = p$, then we may choose $T=\langle g\rangle$ so that $\langle g\rangle \le T\times U$. In this case, again by by Lemma \ref{lem:defnoFrat} and Lemma \ref{lem:defnoelem}, the deflation numbers are given by
        \[
    p\cdot m_{G, gx}^{\langle x\rangle} = \begin{cases}
              1 & \text{if } x\not\in T,\\
            1 - p^{k-1} & \text{if } x \in T.
          \end{cases} 
    \]
    Repeating the same argument once more, we get
    \[
    (p(1-p^{k-1} -1) +1)\alpha_g = 0
    \]
    and hence get a contradiction since $\alpha_g\not = 0$.
\item[] \textbf{Case (4):} There is a final case where $g\not\in \Phi(G)$ and $\langle g\rangle$ is not a maximal subgroup. Then the deflation numbers are equal to $1/p$ and calculations similar to the ones above shows that idempotents $e_{G, g}^G$ for such elements also satisfy the assertion. We leave the necessary modifications to the reader.
\end{enumerate}
\end{proof}

Now it remains to prove the assetion for groups of the form $C_{p^r}\times C_p$ for an integer $r\ge 1$.
\begin{proposition}\label{pro:1Cp1Cpk}
    The group $G = C_{p^r}\times C_p$ with $r>1$ satisfies condition (C).
\end{proposition}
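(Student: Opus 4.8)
The plan is to verify Condition~A for $G=C_{p^r}\times C_p$, $r>1$, following the scheme of Remark~2.6. Writing $G=\langle a\rangle\times\langle b\rangle$ with $|a|=p^r$, $|b|=p$, the hypothesis $r>1$ is used through the fact that $\Phi(G)=\langle a^p\rangle$ is cyclic of order $p^{r-1}$: it has a unique minimal subgroup $Z=\langle a^{p^{r-1}}\rangle$, the remaining $p$ minimal subgroups $U_0,\dots,U_{p-1}$ all satisfy $U_j\Phi(G)=M_0$ (the unique non‑cyclic maximal subgroup), and the other $p$ maximal subgroups $L_1,\dots,L_p$ are cyclic of order $p^r$, each a complement to every $U_j$. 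By \eqref{eqn:forY} (all $g\in G$ are non‑generators since $G$ is non‑cyclic), a general $x\in\delta_\Phi\kerlin(G)$ has the form $x=\sum_{g\in G}\alpha_g e_{G,g}^G$ subject only to the relations $\sum_{g'\in gZ}\alpha_{g'}=0$ coming from $\defl^G_{G/Z}x=0$; moreover $\mathcal K\kerlin(G)$ consists of those $x\in\delta_\Phi\kerlin(G)$ on which in addition every $\defl^G_{G/U_j}$ vanishes. Since $Z\le W:=\Omega_1(G)\cong C_p\times C_p$, the $Z$‑relations already force $\defl^G_{G/W}x=0$.

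Using Lemmas~\ref{lem:defnoFrat} and \ref{lem:defnoelem} to pass to the rank‑two group $G/\Phi(G)$, one finds $m_{G,g}^{U_j}=(1-p)/p$ for $g\in\Phi(G)$, $=1/p$ for $g\in M_0\setminus\Phi(G)$, and $=0$ for $g\notin M_0$, while $m_{G,g}^Z=1/p$ throughout; in particular $\defl^G_{G/U_j}x$ is supported on idempotents $e_{G/U_j,\bar g}^{G/U_j}$ with $\bar g\in\Phi(G/U_j)$, where $G/U_j\cong C_{p^r}$. Now set
\[
\alpha_1:=\sum_{j=0}^{p-1}\Bigl(\infl^G_{G/U_j}\defl^G_{G/U_j}-\tfrac1p\sum_{k=1}^{p}\ind^G_{L_k}\,\iso\,\defl^G_{G/U_j}\Bigr)\in I_G,
\]
where $\iso\colon\kerlin(G/U_j)\xrightarrow{\ \sim\ }\kerlin(L_k)$ is induced by $G=U_j\times L_k$; every summand factors through the proper subquotient $G/U_j$ or $L_k$, so $\alpha_1\in I_G$. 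By Theorem~\ref{thm:bisetactions}(b),(c) the inflation $\infl^G_{G/U_j}$ of a top idempotent produces, besides top idempotents, exactly the idempotents $e_{L_k,l}^G$ (the $L_k$ being the complements of $U_j$), and the second sum in $\alpha_1$ is designed to cancel these; hence $w:=(1-\alpha_1)x$ is supported on the top idempotents. Using $\defl^G_{G/W}x=0$ and the standard commutation relation $\defl^G_{G/N_1}\infl^G_{G/N_2}=\infl^{G/N_1}_{G/N_1N_2}\defl^{G/N_2}_{G/N_1N_2}$ (see \cite{Bouc}), one checks $\defl^G_{G/Z}w=0$, so $w\in\delta_\Phi\kerlin(G)$.

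The heart of the argument is the computation of $\defl^G_{G/U_i}w$ on the primitive idempotent basis. Here the relations $\sum_{g'\in gZ}\alpha_{g'}=0$ — together with their consequences $\sum_{w\in W}\alpha_{mw}=0$ and, for $m\in M_0\setminus\Phi(G)$, $\sum_{z\in Z}\alpha_{m_0z}=0$ for the appropriate $m_0\in\Phi(G)$ — produce precisely the cancellations that collapse $\defl^G_{G/U_i}w$ to $p\,\defl^G_{G/U_i}x_\Phi$, where $x_\Phi:=\sum_{\phi\in\Phi(G)}\alpha_\phi e_{G,\phi}^G$ is the $\Phi(G)$‑supported part of $x$; the same computation shows that the $\Phi(G)$‑supported part of $w$ equals $p\,x_\Phi$. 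Consequently each $(1-\alpha_1)^n x$ lies in $\delta_\Phi\kerlin(G)$ and $\defl^G_{G/U_i}(1-\alpha_1)^n x=p^n\,\defl^G_{G/U_i}x_\Phi$ for $n\ge1$. Then
\[
z:=\tfrac{p}{p-1}(1-\alpha_1)x-\tfrac1{p-1}(1-\alpha_1)^2x
\]
satisfies $\defl^G_{G/U_i}z=\bigl(\tfrac{p}{p-1}p-\tfrac1{p-1}p^2\bigr)\defl^G_{G/U_i}x_\Phi=0$ for all $i$, and $\defl^G_{G/Z}z=0$, so $z\in\mathcal K\kerlin(G)$; and since $1-\alpha_1\equiv1\pmod{I_G}$ and $\tfrac{p}{p-1}-\tfrac1{p-1}=1$, we get $z\equiv x\pmod{I_G\kerlin(G)}$. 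Equivalently $x=z+\alpha x$ with $\alpha:=1-\tfrac{p}{p-1}(1-\alpha_1)+\tfrac1{p-1}(1-\alpha_1)^2\in I_G$, which is the assertion.

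The main obstacle is the idempotent‑basis computation in the previous paragraph: one must evaluate the composites $\defl^G_{G/U_i}\infl^G_{G/U_j}\defl^G_{G/U_j}$ and $\defl^G_{G/U_i}\ind^G_{L_k}\iso\defl^G_{G/U_j}$ (the latter via the Mackey formula and the decompositions $G=U_j\times L_k$ and $U_iU_j=W$), then show, using only the $Z$‑relations on $x$, that all contributions outside the $\Phi(G)$‑supported part cancel and that the surviving $\Phi(G)$‑part behaves exactly as the rescaling $x_\Phi\mapsto p\,x_\Phi$ under $1-\alpha_1$ — which is what makes the one extra iteration above absorb the residual obstruction. (The case $r=1$, namely $G=C_p\times C_p$, where $\Phi(G)=1$ and this dichotomy degenerates, is treated separately in Corollary~\ref{cor:resker2}.)
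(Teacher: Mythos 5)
Your argument is correct, and it takes a genuinely different — and in my view more careful — route than the paper's own. The paper works basis element by basis element with $E_g=\varphi_1^G e_{G,g}^G$: it shows (Case~1) that $E_g\in\mathcal K\kerlin(G)$ when $\langle g\rangle$ is maximal cyclic, and in Case~2 derives $E_g=\tfrac1p\sum_j j_{T_j}^G E_g$ from the intermediate identity $j_{T_j}^G e_{G,x}^G=\sum_i e_{G,xt_j^i}^G$. That identity, as written, drops both the deflation number $m_{G,x}^{T_j}$ and the non-top idempotents $e_{L_k,l}^G$ that Theorem~\ref{thm:bisetactions}(c) says inflation from $G/T_j$ must produce (any supplement $L$ of $T_j$, i.e.\ $L=G$ or $L=L_k$, contributes), so the displayed computation does not close cleanly. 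Your $\alpha_1$ is engineered to address exactly this: the terms $\tfrac1p\ind^G_{L_k}\iso\,\defl^G_{G/U_j}$, using $G=U_j\times L_k$ and $\ind_{L_k}^G e_{L_k,l}^{L_k}=p\,e_{L_k,l}^G$, cancel the $e_{L_k,l}^G$ produced by $\infl^G_{G/U_j}\defl^G_{G/U_j}$, so $(1-\alpha_1)$ stays inside the top-idempotent span. I checked the central scaling computation: with $x=x_\Phi+x_{M_0\setminus\Phi}+x_{G\setminus M_0}$ and the $Z$-relations, the deflation numbers $(1-p)/p,\ 1/p,\ 0$ give $(1-\alpha_1)x=p\,x_\Phi+x_{G\setminus M_0}$ (the middle block is killed, the $\Phi$-block scaled by $p$), whence $z:=\tfrac{p}{p-1}(1-\alpha_1)x-\tfrac1{p-1}(1-\alpha_1)^2x=x_{G\setminus M_0}$, which has all $\defl^G_{G/U_i}$-numbers $0$ and hence lies in $\mathcal K\kerlin(G)$, and $x-z=\bigl(\tfrac{p-2}{p-1}\alpha_1+\tfrac1{p-1}\alpha_1^2\bigr)x$ with coefficient in $I_G$. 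A further benefit of your formulation is that it produces a single pair $(z,\alpha)$ for an arbitrary $x\in\delta_\Phi\kerlin(G)$ at once, rather than asserting Condition~A separately for each basis vector $E_g$ and leaving the uniformity of $\alpha$ implicit. The steps you flagged as ``one checks'' (that $\defl^G_{G/Z}w=0$ via the commutation relation and $\defl^G_{G/W}x=0$, and the coefficient cancellations) do all go through as you describe.
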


\begin{proof}
 We need to prove that $y = \varphi_1^G\cdot y\in\partial\kerlin(G)$ satisfies the assertion of the theorem. Since $y$ is in the span of the primitive idempotents $e_{G, g}^G, g\in G$, it is sufficient to prove the claim for these idempotents. 

 First we evaluate the product $E_g= E_g^G:=\varphi_1^G\cdot e_{G, g}^G$ more explicitly. By \cite[Proposition 3.6]{Bouc}, 
\begin{eqnarray*}
E_g &=& \frac{1}{|G|}\sum_{\substack{X\le G, M\unlhd G\\ M\le \Phi(G) \le X\le G}} |X| \mu(X, G)\mu_{\unlhd G}(1, M)\big(\Upsilon\cdot e_{G,g}^G\big)\\
&=& \frac{1}{|G|}\sum_{\substack{M\le G\\ M\le \Phi(G)}} |G|\mu(1, M)\infl_{G/M}^G\defl^G_{G/M} \cdot\ e_{G, g}^G
\end{eqnarray*}
where $\Upsilon =\ind\infl_{X/M}^G\defl\res^G_{X/M}
$.
The above equality holds since restriction of $e_{G, g}^G$ to any proper subgroup is zero. We also sum over all subgroups since $G$ is abelian. Moreover if $M$ is not elementary abelian, we have $\mu(1, M) = 0$. Thus, since the Frattini subgroup is cyclic, there are only two terms in the above sum, namely for the trivial subgroup $M = 1$ and for the unique subgroup $Z$ of order $p$ of $\Phi(G)$. Hence we obtain
\begin{equation}
E_g  = (\infl^G_G\defl^G_G - \infl_{G/Z}^G\defl^G_{G/Z}) \cdot e_{G, g}^G
\end{equation}
The first biset is the identity biset. We evaluate the action of the second term. Since $Z = \langle z \rangle$ is contained in the Frattini subgroup, the deflation number $m_{G, g}^Z$ is equal to $1/p$. Moreover by Theorem \ref{thm:bisetactions}, we have
\[
\infl_{G/Z}^G e_{G/Z, gZ}^{G/Z} = \sum_{i=0}^{p-1}e_{G, gz^i}^G.
\]
Combining all these, we get
\begin{equation}\label{eqn:phieee}
E_g=  e_{G, g}^G - \frac{1}{p}\sum_{i=0}^{p-1}e_{G, gz^i}^G.
\end{equation}
For further use, we list subgroups of order $p$ in $G$. 
Let $T_0=\langle t\rangle\not= Z$ be a subgroup of order $p$. Then for each $j= 1, \ldots, p-1$, the element $t_j:= tz^j$ generates a subgroup of order $p$, say $T_j$. Moreover for $i\not = j$, we clearly have $T_i \cap T_j = \{1\}$. Hence $Z, T_0, T_1, \ldots, T_{p-1}$ is the complete list of subgroups of order $p$.

Now by the above calculations, we already have that $\defl^G_{G/Z}E_g = 0$. Hence we only need to consider deflation to the $T_j$. There are three cases to consider.
\begin{enumerate}
    \item[] \textbf{Case (1):} Suppose that $\langle g\rangle$ is a maximal cyclic subgroup of $G$. Then for any $j= 0, \ldots, p-1$, the deflation number $m_{G, g}^{T_j}$ is zero. Indeed in this case there is a unique maximal subgroup containing $g$, namely $\langle g\rangle$ and hence we get
    \begin{eqnarray*}
    m_{G, g}^{T_j} &=& \frac{1}{p}\sum_{\substack{V\le G\\ VT_j = G, g\in V}} \mu(V, G) \\
    &=& \frac{1}{p} (-1 + 1) = 0.
    \end{eqnarray*}
    Hence in this case, $E_g$ lies in $\mathcal K\kerlin(G)$. This completes the proof of the current case.
    \item[] \textbf{Case (2):} Suppose $g\in\Phi(G)$. Then for each $j=0, 1, \ldots, p-1$, we have
    \[
    m_{G , g}^{T_j} = \frac{1-p}{p}
    \]
    Indeed, since $g$ is contained in any maximal subgroup, each maximal subgroup $V$ contributes $-1/p$ to the sum. Also the condition $U\cdot T_j = G$ is satisfied only if $V$ is a maximal cyclic subgroup. Together with the contribution $1$ from $V = G$, the result is obtained. 

    Now fix $j\in\{0, 1, \ldots, p-1\}$. We evaluate $j_{T_j}^G\cdot E_g$ where $j_N^G = \infl_{G/N}^G\defl^G_{G/N}$. Notice that for any $x\in G$, we have
    \[
    j_{T_j}^G\cdot e_{G, x}^G = \sum_{i=0}^{p-1} e_{G, xt_j^i}^G.
    \]
    Hence, using linearity and Equation (\ref{eqn:phieee}), we obtain
    {\allowdisplaybreaks
    \begin{eqnarray*}
    j_{T_j}^G\cdot E_g &=& j_{T_j}^G\cdot e_{G, g}^G - \frac{1}{p}\sum_{i=0}^{p-1}j_{T_j}^G\cdot e_{G, gz^i}^G\\ 
    &=& \sum_{k=0}^{p-1} e_{G, gt_j^k}^G - \frac{1}{p}\sum_{i=0}^{p-1}\sum_{k=0}^{p-1} e_{G, gz^it_j^k}^G \\
    &=& \sum_{k=0}^{p-1} e_{G, gt_j^k}^G - \frac{1}{p}\sum_{k=0}^{p-1}\sum_{i=0}^{p-1} e_{G, gz^it_j^k}^G \\
    &=& \sum_{k=0}^{p-1} \Big( e_{G, gt_j^k}^G - \frac{1}{p}\sum_{i=0}^{p-1} e_{G, gz^it_j^k}^G\Big) \\
    &=& \sum_{k=0}^{p-1} \varphi_1^G\cdot e_{G, gt_j^k}^G\\
    &=& \varphi_1^G\cdot e_{G, g}^G  + \sum_{k=1}^{p-1} \varphi_1^G\cdot e_{G, gt_j^k}^G
    \end{eqnarray*}}
    Now summing over all $j$, one gets
    \begin{equation}\label{eqn:phieeefinal}
    \sum_{j=0}^{p-1}j_{T_j}^G\cdot E_g = p\cdot E_g + \sum_{j=0}^{p-1}\sum_{k=1}^{p-1} \varphi_1^G\cdot e_{G, gt_j^k}
    \end{equation}
    We claim that the last term in the above equation is zero. Indeed, we have
    \begin{eqnarray*}
        \sum_{j=0}^{p-1}\sum_{k=1}^{p-1} \varphi_1^G\cdot e_{G, gt_j^k}^G &=& \sum_{j=0}^{p-1}\sum_{k=1}^{p-1} \Big( e_{G, gt_j^k}^G - \frac{1}{p}\sum_{i=0}^{p-1} e_{G, gt_j^kz^i}^G\Big)\\
        &=& \sum_{j=0}^{p-1}\sum_{k=1}^{p-1} e_{G, gt_j^k}^G - \frac{1}{p}\sum_{j=0}^{p-1}\sum_{k=1}^{p-1} \sum_{i=0}^{p-1} e_{G, gt_j^kz^i}^G
    \end{eqnarray*}
    It is clear that the set $\{t_j^k \mid k=1,\ldots, p-1, j=0,\ldots, p-1\}$ has no repetition and is equal to the set of all elements of $G$ of order $p$ not contained in $Z$. On the other hand $\{t_j^kz^i \mid k=1,\ldots, p-1, j=0,\ldots, p-1, i=0,\ldots, p-1\}$ is also equal to the same set but this time each element appears $p$ times. Thus each idempotent $e_{G, gt_j^k}^G$ appears in the right hand side $1 - (1/p)\cdot p = 0$ times. Hence the whole sum is zero. Substituting in Equation (\ref{eqn:phieeefinal}), we obtain
    \begin{equation}
    E_g = \varphi_1^G\cdot e_{G, g}^G = \frac{1}{p}\cdot \sum_{j=0}^{p-1}j_{T_j}^G\cdot E_g
    \end{equation}
    In particular, the element $E_g$ satisfies the conclusion of the theorem, as required.
\item[] \textbf{Case (3):} There is a final case where $g\not\in \Phi(G)$ and $\langle g\rangle$ is not a maximal subgroup. This case is very similar to Case (4) of the previous theorem and once more we leave details to the reader.
\end{enumerate}
\end{proof}

\noindent As a corollary of the theorem, we obtain the following partial parameterization of simple composition factors of $\kerlin$.
\begin{corollary}\label{cor:resker1}
    Let $S$ be a simple biset functor with minimal group $H\not \cong C_p\times C_p$ and put $V = S(H)$. Then the multiplicity of $S$ as a composition factor of $\kerlin$ as a biset functor is equal to the multiplicity of $V$ as a composition factor of $\mathcal K\kerlin(H)$ as a $\mathbb C[\out(H)]$-module.
\end{corollary}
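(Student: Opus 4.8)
The plan is to obtain the corollary as a direct application of the reduction theorem, Theorem~\ref{thm:bck}: that theorem asserts that if Condition~A holds for $\kerlin$ at a $p$-group $H$, then $[\kerlin:S_{H,V}]=[\mathcal K\kerlin(H):V]$ for every simple $\mathbb C[\out(H)]$-module $V$, and Theorem~\ref{thm:red2Rest} is precisely the verification of Condition~A at the groups we need.

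First I would dispose of the trivial group. Here $\kerlin(1)=0$, since it is the kernel of the linearization map $\MonB(1)\to\mathcal R_\mathbb C(1)$, which is an isomorphism of one-dimensional spaces; hence $\kerlin$ has no composition factor with trivial minimal group and $\mathcal K\kerlin(1)\subseteq\kerlin(1)=0$, so both multiplicities in the statement vanish when $H=1$. For $H$ non-trivial with $H\not\cong C_p\times C_p$, Theorem~\ref{thm:red2Rest} states exactly that every $x\in\kerlin(H)$ admits a decomposition $x=x'+\alpha x$ with $x'\in\mathcal K\kerlin(H)$ and $\alpha\in I_H$; this is Condition~A for $\kerlin$ at $H$, equivalently the hypothesis of Theorem~\ref{thm:bck} by \cite[Proposition~2.6]{BCK}.

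Applying Theorem~\ref{thm:bck} at $H$ then gives $[\kerlin:S_{H,V}]=[\mathcal K\kerlin(H):V]$. To align this with the statement of the corollary, I would record that $\mathcal K\kerlin(H)$ is annihilated by the ideal $I_H$: each generator $\alpha\circ\beta$ of $I_H$, with $\beta\in\mathbb CB(K,H)$ and $K$ of order strictly smaller than $|H|$, acts on the restriction kernel through $\kerlin(\beta)$, which vanishes on $\mathcal K\kerlin(H)$ by definition. Hence $\mathcal K\kerlin(H)$ is a module over $\mathcal E_H/I_H\cong\mathbb C[\out(H)]$, and the multiplicity of $V$ in it is the same whether $\mathcal K\kerlin(H)$ is regarded as an $\mathcal E_H$-module or as a $\mathbb C[\out(H)]$-module.

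The corollary is therefore a bookkeeping consequence of the two theorems and presents no genuine difficulty; the only point worth emphasizing is that Theorem~\ref{thm:bck} is local to $H$ — it requires Condition~A at $H$ alone — so the single exclusion $H\not\cong C_p\times C_p$ suffices, even though at this stage Condition~A has not yet been established for $C_p\times C_p$ (that group being treated separately in Corollary~\ref{cor:resker2}).
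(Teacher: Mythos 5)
Your proof is correct and follows exactly the route the paper intends: Theorem \ref{thm:red2Rest} supplies Condition~A for $\kerlin$ at every non-trivial $p$-group $H\not\cong C_p\times C_p$, \cite[Proposition~2.6]{BCK} translates this into the hypothesis of Theorem~\ref{thm:bck}, and that theorem yields the multiplicity equality. The paper leaves this argument implicit (stating the result simply ``as a corollary of the theorem''), and you have merely made the bookkeeping explicit, including the harmless observations that the trivial group contributes nothing and that $\mathcal K\kerlin(H)$ is naturally a $\mathbb C[\out(H)]$-module because it is annihilated by $I_H$.
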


\begin{nothing} We need to study the case $G= C_p\times C_p$ separately. First note that the above proof does not apply to this case. Although it could have been covered in Proposition \ref{pro:1Cp1Cpk} since the idempotent $\varphi_1^G$ acts on the primitive idempotents $e_{G, g}^G$ as the identity, the argument cannot be applied. Actually the conclusion of the theorem does not hold for $C_p\times C_p$. Nevertheless we still claim that the composition factors in this case are also the ones coming from the restriction kernel.
\end{nothing}
Let $\mathcal D$ be the full subcategory of the biset category $\mathcal C$ on the groups (isomorphic to) $1, C_p$ and $E_2:= C_p\times C_p$. Then for any simple $\mathbb C[\out(E_2)]$-module $V$, we have
\[
[\kerlin : S_{E_2, V}^\mathcal{C}] = [\res^\mathcal{C}_\mathcal{D}\kerlin : S_{E_2, V}^\mathcal{D}]
\]
Note that $\kerlin(1) = 0$ and $\dim_\mathbb C\kerlin(C_p) = 1$. Since $C_p$ is minimal for $\kerlin$, there is a subfunctor $F\subseteq \kerlin$ isomorphic to $S_{C_p, 1}$. Let $M:= \res^\mathcal{C}_\mathcal{D}\kerlin/F$. Then $M$ is nonzero only at $E_2$ and hence composition factors of $M$ are in bijection with the composition factors of $M(E_2)$ regarded as a $\mathbb C[\out(E_2)]$-module. We claim that
\[
\kerlin(E_2) = F(E_2) \oplus \mathcal K\kerlin(E_2),
\]
as $\mathrm{End}_\mathcal C(E_2)$-modules and hence $M(E_2) \cong \mathcal K\kerlin(E_2)$. In particular, a simple biset functor $S_{E_2, V}$ is a composition factor of $\kerlin$ if and only if $V$ is a composition factor of $\mathcal K\kerlin(E_2)$. To prove the claim, note that the sum  $F(E_2) + \mathcal K\kerlin(E_2)$ is a direct sum since $F(E_2)$ is a simple module which is not annihilated by the ideal $I_{E_2}$ whereas any submodule of $\mathcal K\kerlin(E_2)$ is annihilated by this ideal. Hence it is sufficient to compare the dimensions. 

By Corollary \ref{cor:ker-ipot}, $\kerlin(E_2)$ has basis consisting of the primitive idempotents $e_{H, h}^{E_2}$ such that $H\not = \langle h\rangle$. The number of such idempotents is $p^2 + p + 1$. On the other hand, by the main theorem of \cite{Bouc3}, the dimension of $S_{C_p, 1}(E_2)$ is $2p + 2$. Hence for the claim to be true, the dimension of $\mathcal K\kerlin(E_2)$ must be equal to $p^2-p-1$. 

Now we determine a basis for $\mathcal K\kerlin(E_2)$. Since it is contained in the intersection of kernels of restriction maps, we have that $\mathcal K\kerlin(E_2)\subseteq \mathrm{span}\{ e_{E_2, g}^{E_2}\mid g\in E_2\}$. Hence it is sufficient to determine the elements in this span which are annihilated by all deflation maps $\defl^{E_2}_{E_2/T}$ for $1< T< E_2$. By Lemma \ref{lem:defnoelem}, for any $g\in E_2$, we have
\[
p\cdot m_{E_2, g}^T =  \begin{cases}
              0 & \text{if } g\not\in T,\\
            1-p & \text{if } g = 1,\\
            1 & \text{if } 1\not = g\in T
          \end{cases} 
\]
Hence
\[
p\cdot\defl^{E_2}_{E_2/T}\Big(\sum_{g\in E_2}e_{E_2, g}^{E_2}\Big) =
(1-p)e_{E_2/T, T}^{E_2} + \sum_{g\in E_2-\{1\}} e_{E_2/T, T}^{E_2} = 0
\] 
that is, the sum $F_1:= \sum_{g\in E_2}e_{E_2, g}^{E_2}$ is in the intersection kernel. Note that if $p=2$, then $p^2 - p - 1$ is equal to $1$ and hence we conclude that $\mathcal{K}\kerlin(E_2)$ is of dimension 1. Hence the claim holds.

\begin{nothing} Suppose $p\ge 3$. We introduce other elements that lie in the restriction kernel. Note that by the above formula for the deflation numbers, it is clear that a sum of the form 
\[
\sum_{1\not = g\in T}\alpha_g e_{E_2, g}^{E_2}
\]
is in the restriction kernel if and only if the sum of coefficients $\alpha_g$ is zero. We may use irreducible characters of $\Aut(T)$ to produce such coefficients.

We fix our notation. Since $T=\langle t\rangle\cong C_p$, we have that $\Aut(T)\cong C_{p-1}$. Let $s$ be a primitive root modulo $p$ and $u$ be its multiplicative inverse modulo $p$. Define $\gamma_T: t\mapsto t^s$ which is a generator of $\Aut(T)$. Now for each non-trivial $\phi\in\widehat{\Aut(T)}$, define
\[
F_{t, \phi} = \sum_{\gamma\in\Aut(T)} \overline{\phi(\gamma)}\cdot e_{{E_2}, \gamma(t)}^{E_2}
\]
By orthonormality of the rows of the character table, the sum of the coefficients in the above sum is zero and hence $F_{t, \phi}$ is contained in the intersection kernel. Since the matrix of coefficients for $F_{t, \phi}$ as $\phi$ runs over all non-trivial irreducible characters of $\Aut(T)$ is a part of the character table of $\Aut(T)$, its rows are linearly independent and hence the set $\{ F_{t, \phi}\mid 1\not= \phi\in\widehat{\Aut(T)} \}$ is also linearly independent and has cardinality equal to $p-2$. 

We may repeat the same construction for each subgroup of order $p$ in $E_2$ and each such subgroup would produce a linearly independent set of $p-2$ elements. Notice that the union set is also linearly independent since the primitive idempotents that appear for different subgroups are distinct. Hence we obtain $(p+ 1)(p-2) = p^2 - p -2$ linearly independent elements in the restriction kernel. Together with $F_1$ above, which is clearly independent from the $F_{t, \phi}$, we obtain $p^2-p-1$ linearly independent elements in $\mathcal{K}\kerlin(E_2)$ as required. In particular, the set 
\begin{equation}\label{eqn:basisB}
\mathcal B = \{F_1\}\cup \bigcup_{1<\langle t\rangle < E_2} \{ F_{t, \phi}\mid 1\not =\phi\in\widehat{\Aut(\langle t\rangle)} \}
\end{equation}
is a basis for $\mathcal{K}\kerlin(E_2)$. With this result, we may restate Corollary \ref{cor:resker1} as follows.
\end{nothing}
\begin{corollary}\label{cor:resker2}
    Let $S$ be a simple biset functor with minimal group $H$ and put $V = S(H)$. Then the multiplicity of $S$ as a composition factor of $\kerlin$ as a biset functor is equal to the multiplicity of $V$ as a composition factor of $\mathcal K\kerlin(H)$ as a $\mathbb C[\out(H)]$-module.
\end{corollary}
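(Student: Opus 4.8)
The claim for a simple functor whose minimal group $H$ is not isomorphic to $E_2:=C_p\times C_p$ is exactly Corollary \ref{cor:resker1}, which was deduced from Theorem \ref{thm:red2Rest} together with \cite[Theorem 2.5]{BCK}; so the only new content is the case $H\cong E_2$, and the plan is to extract it from the discussion preceding the statement. First I would pass to the full subcategory $\mathcal D$ of $\mathcal C$ on the groups $1$, $C_p$, $E_2$ and invoke the restriction version of \cite{BCK} to replace $[\kerlin:S_{E_2,V}^{\mathcal C}]$ by $[\res^{\mathcal C}_{\mathcal D}\kerlin:S_{E_2,V}^{\mathcal D}]$. Since $C_p$ is the minimal group of $\kerlin$ and $\dim_{\mathbb C}\kerlin(C_p)=1$, there is a subfunctor $F\subseteq\kerlin$ with $F\cong S_{C_p,1}$; setting $M:=\res^{\mathcal C}_{\mathcal D}\kerlin/F$, the functor $M$ vanishes at $1$ and at $C_p$, so its only composition factors are of the form $S_{E_2,V}^{\mathcal D}$, with multiplicities equal to those of $V$ in the $\mathbb C[\out(E_2)]$-module $M(E_2)$.

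The heart of the argument is the identification $M(E_2)\cong\mathcal K\kerlin(E_2)$, i.e.\ the decomposition $\kerlin(E_2)=F(E_2)\oplus\mathcal K\kerlin(E_2)$ of $\mathrm{End}_{\mathcal C}(E_2)$-modules. Directness is formal: $F(E_2)$ is simple and is not annihilated by $I_{E_2}$, while every submodule of $\mathcal K\kerlin(E_2)$ is annihilated by $I_{E_2}$, so the intersection is zero. Equality is then a dimension count: $\dim_{\mathbb C}\kerlin(E_2)=p^2+p+1$ by Corollary \ref{cor:ker-ipot}, and $\dim_{\mathbb C}S_{C_p,1}(E_2)=2p+2$ by \cite{Bouc3}, so it remains to show $\dim_{\mathbb C}\mathcal K\kerlin(E_2)=p^2-p-1$.

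For that dimension I would exhibit an explicit basis. Since all proper restrictions annihilate $\mathcal K\kerlin(E_2)$, it lies in the span of the idempotents $e_{E_2,g}^{E_2}$, $g\in E_2$, and the deflation numbers of Lemma \ref{lem:defnoelem} show that a combination $\sum_g\alpha_g e_{E_2,g}^{E_2}$ is killed by every $\defl^{E_2}_{E_2/T}$ exactly when, for each order-$p$ subgroup $T$, the coefficients over $1\neq g\in T$ satisfy the appropriate linear relation to $\alpha_1$. This yields $F_1=\sum_{g\in E_2}e_{E_2,g}^{E_2}$ and, for each of the $p+1$ order-$p$ subgroups $\langle t\rangle$, the $p-2$ vectors $F_{t,\phi}=\sum_{\gamma\in\Aut(\langle t\rangle)}\overline{\phi(\gamma)}\,e_{E_2,\gamma(t)}^{E_2}$ indexed by the nontrivial characters $\phi$ of $\Aut(\langle t\rangle)\cong C_{p-1}$; row-orthogonality of the character table makes each family independent, and the families for distinct $t$ are supported on disjoint idempotents, so $\mathcal B=\{F_1\}\cup\bigcup_t\{F_{t,\phi}\}$ is a linearly independent subset of $\mathcal K\kerlin(E_2)$ of size $1+(p+1)(p-2)=p^2-p-1$ (for $p=2$ the count collapses to $F_1$ alone). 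The opposite inequality $\dim_{\mathbb C}\mathcal K\kerlin(E_2)\le p^2-p-1$ comes for free from the already established inclusion $F(E_2)\oplus\mathcal K\kerlin(E_2)\subseteq\kerlin(E_2)$, so $\mathcal B$ is a basis and the decomposition follows; combining the chain of equalities gives $[\kerlin:S_{E_2,V}]=[\mathcal K\kerlin(E_2):V]$.

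The step I expect to be the main obstacle is this $E_2$ bookkeeping: tracking the multiplicities with which the various idempotents $e_{E_2,gt_j^k}^{E_2}$ occur while checking the deflation-vanishing conditions, and, more conceptually, confirming that the restriction version of \cite[Theorem 2.5]{BCK} genuinely applies to the three-object category $\mathcal D$. The latter requires the analogue of Condition A inside $\mathcal D$, which does hold because the maps that obstruct Condition A over all of $\mathcal C$ factor through $p$-groups outside $\mathcal D$; once $\mathcal D$ is in place the earlier computation of $\mathcal K\kerlin(E_2)$ does the rest.
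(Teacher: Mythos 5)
Your main argument reproduces the paper's proof essentially verbatim: for $H\not\cong C_p\times C_p$ quote Corollary~\ref{cor:resker1}; for $H\cong E_2$ restrict to the subcategory $\mathcal D$ on $\{1,C_p,E_2\}$, factor out the subfunctor $F\cong S_{C_p,1}$, observe that $M:=\res^{\mathcal C}_{\mathcal D}\kerlin/F$ is supported only at $E_2$, and establish $M(E_2)\cong\mathcal K\kerlin(E_2)$ via the direct-sum argument and the explicit basis $\{F_1\}\cup\bigcup_{t}\{F_{t,\phi}\}$. However, your closing remark that one "requires the analogue of Condition A inside $\mathcal D$, which does hold because the maps that obstruct Condition A over all of $\mathcal C$ factor through $p$-groups outside $\mathcal D$" is incorrect. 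The ideal $I_{E_2}$ is generated by morphisms factoring through the groups $1$ and $C_p$, both of which are objects of $\mathcal D$, so Condition A at $E_2$ is the same assertion in $\mathcal C$ and in $\mathcal D$ — and the paper states explicitly, in the paragraph immediately preceding this corollary, that Condition A fails at $C_p\times C_p$. The detour through $\mathcal D$, the subfunctor $F$, and the decomposition $\kerlin(E_2)=F(E_2)\oplus\mathcal K\kerlin(E_2)$ is there precisely because Condition A is unavailable at $E_2$ and a different mechanism is needed to relate multiplicities in $\kerlin$ to multiplicities in $\mathcal K\kerlin(E_2)$. Fortunately, your first three paragraphs do not actually invoke Condition A for $E_2$, so the proof as written is valid; simply delete the Condition-A justification from the final paragraph.
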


From the proofs in this section we also obtain the complete list of all finite $p$-groups $G$ such that $\mathcal K\kerlin(G)\not =0$; completing the proof of Theorem \ref{thm:main1}.

\section{Calculation of Restriction Kernels I: Cyclic case}\label{sec:restker}

In this section we calculate the $\mathbb C[\out(G)]$-module structure of the restriction kernel $\mathcal K\kerlin(G)$ of $\kerlin$ for cyclic \( p \)-groups $G = C_{p^k}$ for all $k\ge 1$.

\begin{nothing} {\textbf{Notation.}}
Throughout this section, fix an integer $k\ge 1$ and put $G = C_{p^k}$.
When $k=1$, the evaluation $\kerlin(C_p)$ is the trivial $\mathbb C[\Aut(C_p)]$-module, so there is nothing to prove. This is also part (a) of Theorem \ref{thm:main}. We assume $k\ge 2$.
Denote by $H_l:=\langle h_l\rangle$ the subgroup of $G$ of order $p^l$. For any $0\le l<k$ and $\phi\in \widehat{\Aut(H_l)}$, define
\[
e_{G, l, \phi} := \sum_{\alpha\in\Aut(H_l)}\phi(\alpha^{-1})\cdot e_{G, \alpha(h_l)}^G
\]
and
\[
\mathcal D_G=\{ e_{G, l, \phi} \mid 0\le l < k,\ \phi\in\widehat{\Aut(H_l)}\}.
\]
\end{nothing}

\begin{nothing} {\textbf{Automorphism groups.}}
Let $H$ be a cyclic $p$-group of order $\ge p^2$ and $Z$ its unique minimal subgroup. The map
\[
\Psi: \Aut(H)\to \Aut(H/Z),\ \alpha\mapsto (\alpha_Z: xZ\mapsto \alpha(x)Z)
\]
is a surjective group homomorphism. We write $\Aut_Z(H):= \ker\Psi$, which has order $p$, and obtain
\[
\Aut(H)/\Aut_Z(H) \cong \Aut(H/Z) \text{ via } \alpha\Aut_Z(H)\mapsto \alpha_Z.
\]
\end{nothing}

\begin{theorem}\label{thm:cyclic}
Let $G = C_{p^k}$ with $k\ge 2$. The set $\mathcal{D}_G$ is a $\mathbb C$-basis for 
$\widetilde{e_G^G}\kerlin(G)$, on which $\Aut(G)$ acts monomially via
\begin{equation}\label{eqn:phistar}
\beta\cdot e_{G, l, \phi} = \phi(\beta|_{H_l})\, e_{G, l, \phi}
\end{equation}
for $\beta\in\Aut(G)$ and $e_{G,l,\phi}\in\mathcal{D}_G$. Moreover there is an isomorphism 
of $\mathbb C[\Aut(G)]$-modules
\begin{equation}\label{eqn:cyclicfinal}
\mathcal K\kerlin(G) \cong \bigoplus_{\substack{2\le l< k\\ \phi_p\,\mathrm{faithful}}}
\mathbb C[e_{G, l, \phi}]
\oplus \bigoplus_{\phi\not = 1}\mathbb C[e_{G, 1, \phi}]
\oplus\, \mathbb C[e_{G, 1, 1} - (1-p)e_{G, 0, 1}],
\end{equation}
and $\mathcal{K}\kerlin(G)$ is isomorphic to the direct sum of all non-primitive irreducible
$\mathbb C[\Aut(G)]$-modules, each with multiplicity $1$.
\end{theorem}

\noindent With this result, the following corollary is immediate.
\begin{corollary}
Part \textnormal{(b)} of Theorem~\ref{thm:main} holds.
\end{corollary}
\begin{proof}
By Corollary~\ref{cor:resker1}, the multiplicity of $S_{G,V}$ as a composition factor of $\kerlin$ equals the multiplicity of $V$ as a composition factor of $\mathcal{K}\kerlin(G)$ as a $\mathbb C[\out(G)]$-module. The result therefore follows directly from Theorem~\ref{thm:cyclic}.
\end{proof}
\noindent Now we prove the theorem.
\begin{proof}{[of Theorem ~\ref{thm:cyclic}.]}
The proof proceeds in four steps. We first show that $\mathcal{D}_G$ is a $\mathbb C$-basis for $\widetilde{e_G^G}\kerlin(G)$ and determine the $\Aut(G)$-action on it. We then compute the deflation formula for the basis elements $e_{G,l,\phi}$, from which the isomorphism (\ref{eqn:cyclicfinal}) is read off directly. Finally we verify that the summands afford exactly the non-primitive irreducible characters of $\Aut(G)$, each with multiplicity $1$.

\textbf{Step 1.} The dimension of $\widetilde{e_G^G} \kerlin(G)$ is equal to the number of non-generators of $G$ and it is also easy to see that the cardinality of $\mathcal{D}_G$ is also equal to this number. Thus it is sufficient to check that the set is linearly independent.

If $l\not = n$, then for any choices of $\phi\in\widehat{\Aut(H_l)}$ and $\psi\in\widehat{\Aut(H_n)}$, the primitive idempotents that appear in $e_{G, l, \phi}$ and $e_{G, n, \psi}$ are distinct. Hence there can be no linear relation between these type of elements. Hence the only possible relations are among those elements for fixed $l$. On the other hand, for a fixed $l$, the coefficient matrix for $e_{G, l, \phi}$ as $\phi$ runs over all elements of $\widehat{\Aut(H_l)}$ is equal to the character table of $\widehat{\Aut(H_l)}$, and hence is invertible. In particular the set $\{ e_{G, l, \phi} \mid \phi\in\widehat{\Aut(H_l)}\}$ is linearly independent, which proves the first claim.

For the second claim, the group $\Aut(G)$ acts on the primitive idempotents of $\MonG$ via
\[
\beta\cdot e_{H, h}^G = e_{\beta(H), \beta(h)}^G
\]
for $\beta\in\Aut(G)$ and $e_{H, h}^G\in \MonG$. This action gives rise to a monomial action on $\mathcal{D}_G$. We evaluate this action explicitly: Let $e_{G, l, \phi}\in \mathcal{D}_G$. Then
\[
\beta\cdot e_{G, l, \phi} = \sum_{\alpha\in\Aut(H_l)} \phi(\alpha^{-1})\cdot e_{G, \beta(\alpha(h_l))}^G
\]
Since $G$ is a cyclic group, the restriction of $\beta$ to $H_l$ is an automorphism of $H_l$ and hence $\beta\circ \alpha$ runs over all automorphisms of $\Aut(H_l)$ as $\alpha$ does. Therefore putting $\gamma = \beta\circ \alpha$ we get
\begin{eqnarray*}
\beta\cdot e_{G, l, \phi} &=& \sum_{\gamma\in\Aut(H_l)} \phi(\gamma^{-1}\circ \beta)\cdot e_{G, \gamma(h_l)}^G\\
&=& \phi(\beta|_{H_l}) \cdot e_{G, l, \phi}.
\end{eqnarray*}
Here the last equality holds since $\phi$ is a group homomorphism. Therefore the line $\mathbb C[e_{G, l, \phi}]$ in $\kerlin(G)$ is a $\mathbb C[\Aut(G)]$-submodule with character given by
\begin{equation}\label{eqn:phistar}
\Aut(G)\to \mathbb C^\times,\ \beta\mapsto \phi(\beta|_{H_l}).
\end{equation}

\smallskip
\noindent\textbf{Step 2.} By the transitivity of deflation maps, it is sufficient to determine the kernel of $\defl^G_{G/Z}$ where $Z:= H_1$ is the unique subgroup of $G$ of order $p$. Also note that any linear relation between deflated basis elements $\defl^G_{G/Z}e_{G, l, \phi}$ may be reduced to a relation for a fixed $l$. Indeed, for distinct numbers $l\not = n, l\ge 2$, we have, for $\alpha\in \Aut(H_l)$ and $\alpha'\in\Aut(H_n)$, that $\alpha(h_l)Z\not = \alpha'(h_n)Z$. Hence the sets of idempotents that appear in deflation of basis elements for $l$ and for $n$ are disjoint and hence no relation between them is possible. Thus we may work with a fixed index $l$.

We claim that
\begin{equation}\label{eqn:deftoZ}
  \defl^G_{G/Z} e_{G, l, \phi} =\begin{cases}
              0 & \text{if } l\ge 2 \text{ and }\ker\phi\cap \Aut_Z(H_l) = \{1\},\\
            e_{G/Z, l-1, \tilde\phi} & \text{if } l\ge 2 \text{ and }\ker\phi\cap \Aut_Z(H_l) \not = \{1\},\\
                          0 & \text{if } l = 1 \text{ and }\phi \not = 1,\\
            \frac{p-1}{p}e_{G/Z, 0, 1} & \text{if } l = 1 \text{ and } \phi = 1,\\
            \frac{1}{p}e_{G/Z, 0, 1} & \text{if } l = 0.
          \end{cases}
\end{equation}
First we assume $l\ge 2$. Note that since $G$ is cyclic, there is no supplement to $Z$ in $G$ and hence the deflation number $m_{G, g}^Z$ is equal to $1/p$ for any $g\in G$. Hence we have
\[
p\cdot \defl^G_{G/Z} e_{G, l, \phi} = \sum_{\alpha\in\Aut(H_l)}\phi(\alpha^{-1}) e_{G/Z, \alpha(h_l)Z}^{G/Z}.
\]
Using the above notation, $\alpha(h_l)Z$ is equal to $\Psi(\alpha)(h_lZ)$. Moreover for any $\kappa_z\in\Aut_Z(H_l)$, we have $\Psi(\alpha) = \Psi(\alpha\kappa_z)$. Hence the right hand side of the above equality can be rearranged as
\begin{eqnarray*}
&=& \sum_{\alpha\in [\Aut(H_l)/\Aut_Z(H_l)]} \Big(\sum_{\kappa\in\Aut_Z(H_l)}\phi(\alpha^{-1}\kappa^{-1})\Big) e_{G/Z, \Psi(\alpha)(h_lZ)}^{G/Z}
\end{eqnarray*}
where the first sum is over a complete set of left coset representatives of $\Aut_Z(H_l)$ in $\Aut(H_l)$. Using the multiplicativity of $\phi$, the right hand side of the above equation becomes
\begin{eqnarray*}
&=& \Big(\sum_{\kappa\in\Aut_Z(H_l)}\phi(\kappa^{-1})\Big)\sum_{\alpha\in [\Aut(H_l)/\Aut_Z(H_l)]} \phi(\alpha^{-1}) e_{G/Z, \Psi(\alpha)(h_lZ)}^{G/Z}.
\end{eqnarray*}
Notice that the coefficient is zero unless $\phi|_{\Aut_Z(H_l)}$ is the trivial character. Moreover when $\phi|_{\Aut_Z(H_l)}$ is the trivial character, it deflates to the quotient $\Aut(H_l)/\Aut_Z(H_l)$. We denote the image of this deflation under the isomorphism $\Psi$ by $\tilde\phi$. Then the above sum becomes
\begin{eqnarray*}
&=& p\sum_{\alpha\in [\Aut(H_l)/\Aut_Z(H_l)]} \tilde\phi(\alpha^{-1}_Z) e_{G/Z, \alpha_Z(h_lZ)}^{G/Z}\\
&=& p\sum_{\alpha_Z\in \Aut(H_l/Z)} \tilde\phi(\alpha^{-1}_Z) e_{G/Z, \alpha_Z(h_lZ)}^{G/Z}\\
&=& p\cdot e_{G/Z, l-1, \tilde\phi}.
\end{eqnarray*}
This proves the first two lines of Equation (\ref{eqn:deftoZ}). The remaining cases are not difficult to deal with. We leave the details to the reader. When $l=1$, we have $H_1 = Z$ and it is easy to see that the above result holds at this extreme and hence we obtain the third and the fourth lines of Equation (\ref{eqn:deftoZ}). The last line also follows since $e_{G/Z, 0, 1} = e_{G/Z, 1}^{G/Z}$.

\smallskip
\noindent\textbf{Step 3.}
From Equation (\ref{eqn:deftoZ}), the basis elements $e_{G, 1, \phi}$ for $\phi\not=1$ together with the combination $e_{G, 1, 1} - (1-p)\cdot e_{G, 0, 1}$ form a basis for the kernel of $\defl^G_{G/Z}$, and the isomorphism (\ref{eqn:cyclicfinal}) follows.

\smallskip
\noindent\textbf{Step 4.}
We need to show that each non-primitive irreducible character of $\Aut(G)$ appears on the right-hand side of Equation (\ref{eqn:cyclicfinal}) with multiplicity 1. First note that, by Equation \ref{eqn:phistar}, the character of the last term is the trivial character. For each $1\le l\le k$ and $\phi\in\widehat{\Aut(H_l)}$, let $\phi^*$ be the character of $\widehat{\Aut(G)}$ given by Equation (\ref{eqn:phistar}). It is clear that if we fix $l$, then $\phi^* = \psi^*$ if and only if $\phi = \psi$.

Fix a pair $(l, \phi)$. Also let $\alpha_p$ be a generator of the Sylow $p$-subgroup of $\Aut(G)$. Then since the $p$-part of $\phi$ is faithful, the value $\phi^*_p(\alpha_p) = \phi_p(\alpha_p\mid_{H_l})$ must be a primitive $p^{l-1}$-th root of unity. In particular, if $(m, \psi)$ is another pair with $l\not= m$, then we also have $\phi^*\not=\psi^*$. Thus there is no repetition in the right hand side of Equation (\ref{eqn:cyclicfinal}).

Moreover since the order of the restriction map $\Aut(G)\to \Aut(H_l)$, $\beta\mapsto \beta\mid_{H_l}$, is $p^{k-l}$, the induced character $\phi^*$ is non-primitive (since $p^{k-l}$ is a quasi-period for $\phi^*$).

We count the number of terms. If $2\le l< k$, then the number of irreducible characters of $\Aut(H_l)$ with faithful $p$-part is $p^{l-2}(p-1)^2$. Summing over $l$, we get
\begin{align*}
\dim_\mathbb C\Big(\bigoplus_{\substack{2\le l< k\\ \phi\in \widehat{\Aut(H_l)}\\ \phi_p:\text{faithful}}}\mathbb C[e_{G, l, \phi}]\Big)&=\sum_{\substack{2\le l< k\\ \phi\in \widehat{\Aut(H_l)}\\ \phi_p:\text{faithful}}}1 \\ &= \sum_{l=2}^{k-1} p^{l-2}(p-1)^2 = (p-1)(p^{k-2}-1).
\end{align*}
With the contribution of the last two terms, the dimension of the right hand side is
\[
(p-1)(p^{k-2}-1) + (p-2) + 1 = (p-1)p^{k-2},
\]
which is equal to the number of non-primitive irreducible characters of $\Aut(G)$. Hence each non-primitive irreducible $\mathbb C[\Aut(G)]$-module appears as a summand in Equation (\ref{eqn:cyclicfinal}) with multiplicity 1, as required.
\end{proof}

\section{Calculation of Restriction Kernels II: The $C_p\times C_p$-case}\label{sec:restker2}

In Corollary \ref{cor:resker2}, we have already determined the basis $\mathcal B$ given in \ref{eqn:basisB} for the restriction kernel $\mathcal{K}\kerlin(E_2)$ for $E_2 = C_p\times C_p$.

In this section we determine its $\mathbb C[\Aut(E_2)]$-module structure. It is clear that the basis vector $F_1$ generates a trivial $\mathbb C[\Aut(E_2)]$-submodule and a complement of the submodule is generated by the rest
\[
\mathcal B':= \mathcal B - \{F_1\} = \bigcup_{1<\langle t\rangle < E_2} \{ F_{t, \phi}\mid 1\not =\phi\in\widehat{\Aut(\langle t\rangle)}\}
\]
of the basis vectors. Hence we concentrate on the span of $\mathcal{B}'$. We begin by describing the structure of $\Aut(E_2)$.

\begin{nothing} {\textbf{Automorphism group of $E_2$.}}
We identify $\Aut(E_2) = \gl$ by writing $E_2 = \langle x\rangle \times \langle y\rangle$ as column vectors and representing each automorphism $\alpha$ by the matrix whose columns are $\alpha(x)$ and $\alpha(y)$; so $\alpha(x) = x^ay^c$ and $\alpha(y) = x^by^d$ for $\alpha = \begin{pmatrix} a & b \\ c & d \end{pmatrix}$.

The proper non-trivial subgroups of $E_2$ each have order $p$ and are cyclic. We index them via $\mathfrak C = \{x\} \cup \{x^iy \mid 0 \le i \le p-1\}$ by setting $T_g = \langle g\rangle$ for $g \in \mathfrak C$, so that
\[
\mathfrak S = \{ T_g \mid g \in \mathfrak C \} = \{ T_x, T_y, T_{xy}, T_{x^2y}, \ldots, T_{x^{p-1}y} \}.
\]
For $g\in\mathfrak C$, we write $\Gamma_g = \Aut(T_g)$ and $\hat\Gamma_g$ for its character group. We fix a primitive root $s$ modulo $p$ and let $\gamma_g \in \Gamma_g$ denote the $s$-th power map; this is canonical since it is the restriction of the scalar matrix $\begin{pmatrix} s & 0 \\ 0 & s \end{pmatrix}$ to $T_g$, which commutes with every element of $\gl$. Consequently, for any $\alpha\in\gl$,
\[
\alpha \circ \gamma_g = \gamma_{\alpha(g)} \circ \alpha,
\]
so the isomorphism $\tilde\alpha\colon \Gamma_g \to \Gamma_{\alpha(g)}$ induced by conjugation by $\alpha$ sends $\gamma_g$ to $\gamma_{\alpha(g)}$ and depends only on the subgroup $T_{\alpha(g)}$, not on the particular $\alpha$. Similarly the induced map $\hat\alpha\colon\hat\Gamma_g\to\hat\Gamma_{\alpha(g)}$, $\phi\mapsto{}^\alpha\phi$ (defined by ${}^\alpha\phi({}^\alpha\gamma)=\phi(\gamma)$), depends only on $T_{\alpha(g)}$.

The $\gl$-action on $\mathfrak S$ is given by $\alpha\cdot T_g = T_{\alpha(g)}$. Explicitly, for $\alpha = \begin{pmatrix} a & b \\ c & d \end{pmatrix}$,
\[
\alpha\cdot T_{x^iy} = T_{x^{(ai+b)f}y}, \qquad
\alpha\cdot T_x = \begin{cases} T_{x^{ac^{-1}}y} & \text{if } c\neq 0,\\ T_x & \text{otherwise,}\end{cases}
\]
where $f = (ci+d)^{-1}$ in $\mathbb F_p$. This action is transitive, and the stabilizer of $T_x$ is the Borel subgroup $B$ of upper triangular matrices.
\end{nothing}

\begin{nothing}
Now we are ready to determine $\mathcal{K}\kerlin(E_2)$ as a $\mathbb C[\gl]$-module. We write
\[
\mathbb C[\mathcal B]:= \{ \mathbb C[F_{g, \phi}]\mid F_{g, \phi}\in \mathcal B\}
\]
for the set of lines in $\kerlin(G)$ parameterized by the set $\mathcal{B}$.
\end{nothing}
\begin{theorem}\label{thm:cpcpfinal}
    Assume the above notation.

    {\rm (a)} The set $\mathbb C[\mathcal{B}]$ is a monomial $\gl$-set via the action induced from the action of $\gl$ on $\kerlin(G)$.

    \smallskip
    {\rm (b)} There is a unique fixed point in $\mathcal B$, namely $F_1$.
    
    \smallskip
    {\rm (c)} The $\gl$-orbits in $\mathbb C[\mathcal B]$ of size $\ge 2$ are in one-to-one correspondence with the non-trivial irreducible characters of $\Gamma_x$.
    
    \smallskip
    {\rm (d)} The stabilizer in $\gl$ of any of these orbits is conjugate to $B\le\gl$.
    
    \smallskip
    {\rm (e)} The $B$-action on the line $\mathbb C[F_{x, \phi}]$ is given by the linear character $\tilde\phi: B\to \mathbb C^\times, \begin{pmatrix}
        a & b\\ 0 & d
    \end{pmatrix}\mapsto \phi(a)$ of $B$. 
\end{theorem}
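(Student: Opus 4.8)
The plan is to reduce the whole theorem to one explicit formula for the action of $\gl=\Aut(E_2)$ on the basis $\mathcal B$, and then read off parts (a)--(e). \emph{Step 1: the action formula.} First I would note that $\alpha\cdot F_1=\sum_{g\in E_2}e_{E_2,\alpha(g)}^{E_2}=F_1$ for every $\alpha\in\gl$, since $\alpha$ permutes $E_2$. Then, given $t\in\mathfrak C$ and $1\ne\phi\in\hat\Gamma_t$, I would let $g'\in\mathfrak C$ be the representative with $T_{g'}=\alpha(T_t)$ and let $\delta_0\in\Gamma_{g'}$ be the unique automorphism with $\delta_0(g')=\alpha(t)$. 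Using $\alpha\cdot e_{E_2,h}^{E_2}=e_{E_2,\alpha(h)}^{E_2}$ together with the identity $\alpha\circ\gamma=\tilde\alpha(\gamma)\circ\alpha$ from the discussion preceding the theorem, a double reindexing of the defining sum (first substitute $\delta=\tilde\alpha(\gamma)$, then $\varepsilon=\delta\delta_0$, keeping track of the complex conjugates in the definition of $F_{t,\phi}$) should yield
\[
\alpha\cdot F_{t,\phi}=\psi(\delta_0)\,F_{g',\psi},\qquad\text{where }\psi={}^\alpha\phi\in\hat\Gamma_{g'}.
\]
Since $\psi=1$ if and only if $\phi=1$, this shows that $\gl$ sends each line $\mathbb C[F_{t,\phi}]$ to $\mathbb C[F_{g',{}^\alpha\phi}]$ and $\mathbb C[F_1]$ to itself, which is part (a) together with half of (b).

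\emph{Step 2: orbits, parts (b) and (c).} Here I would invoke two facts recalled before the theorem: $\gl$ acts transitively on $\mathfrak S=\{T_g:g\in\mathfrak C\}$ with $\mathrm{Stab}_{\gl}(T_x)=B$ the Borel subgroup of upper-triangular matrices, and the transported character ${}^\alpha\phi$ depends only on $\alpha(T_x)$ and not on $\alpha$ itself, because $\tilde\alpha$ carries the canonical generator $\gamma_x$ to $\gamma_{g'}$. It follows that the orbit of $\mathbb C[F_{x,\phi}]$ (for $\phi\ne1$) is exactly $\{\mathbb C[F_{g,\phi_g}]:g\in\mathfrak C\}$, where $\phi_g$ is the canonical transport of $\phi$ to $\Gamma_g$; this orbit has size $p+1\ge2$ and meets $\{\mathbb C[F_{x,\psi}]:1\ne\psi\in\hat\Gamma_x\}$ in the single line $\mathbb C[F_{x,\phi}]$. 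Distinct nontrivial characters of $\Gamma_x$ then give distinct orbits, since an $\alpha$ taking $\mathbb C[F_{x,\phi}]$ to $\mathbb C[F_{x,\phi'}]$ would have to fix $T_x$, hence lie in $B$, hence satisfy ${}^\alpha\phi=\phi$. This proves (c); and (b) follows, since any $F_{t,\phi}$ with $\phi\ne1$ lies in an orbit of lines of size $p+1>1$, so $F_1$ is the unique fixed point in $\mathcal B$.

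\emph{Step 3: stabilizers and the Borel character, parts (d) and (e).} From the formula of Step 1, $\alpha$ fixes the line $\mathbb C[F_{x,\phi}]$ precisely when $\alpha(T_x)=T_x$ and ${}^\alpha\phi=\phi$; the first condition says $\alpha\in B$ and then the second is automatic, so $\mathrm{Stab}_{\gl}(\mathbb C[F_{x,\phi}])=B$, and the stabilizers of the other lines in the orbit are $\gl$-conjugates of $B$ --- this is (d). Finally, for $\alpha=\begin{pmatrix}a&b\\0&d\end{pmatrix}\in B$ one has $g'=x$, $\psi={}^\alpha\phi=\phi$, and $\delta_0$ is the map $x\mapsto x^a$, which under the identification $\Gamma_x\cong\mathbb F_p^\times$ is the element $a$; the formula then reads $\alpha\cdot F_{x,\phi}=\phi(a)\,F_{x,\phi}$, which is exactly the character $\tilde\phi$, proving (e).

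The one genuinely computational point is Step 1: carrying out the two reindexings correctly, tracking the conjugates from the definition of $F_{t,\phi}$, and --- crucially --- handling the case $\alpha(t)\notin\mathfrak C$, where $\alpha(t)$ is a nontrivial element of $T_{g'}$ but not the chosen generator $g'$, a discrepancy absorbed precisely by the scalar $\psi(\delta_0)$. After Step 1 is in hand, parts (a)--(e) are formal consequences of that formula and the structural facts about the $\gl$-action on $\mathfrak S$ recorded just before the theorem statement.
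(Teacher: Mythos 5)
Your proposal is correct and follows essentially the same approach as the paper: both derive the key formula for $\alpha\cdot F_{x,\phi}$ by the same double reindexing of the defining sum, and both identify the stabilizer as $B$ and read off the character $\tilde\phi$. The only differences are presentational --- you give a single unified formula $\alpha\cdot F_{t,\phi}=\psi(\delta_0)F_{g',\psi}$ valid for all $\alpha$ and describe each orbit explicitly as $\{\mathbb C[F_{g,\phi_g}]:g\in\mathfrak C\}$, whereas the paper splits the computation into the cases $\alpha\in B$ and $\alpha\notin B$ and closes parts (b), (c) with a counting argument (orbits of size $p+1$ account for $(p+1)(p-2)$ of the $1+(p+1)(p-2)$ lines, leaving $\mathbb C[F_1]$ as the lone fixed line).
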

\begin{proof} We first note that part Part {\rm (b)} is trivial. For the rest of the theorem, we begin by proving that the action of $\gl$ on $\kerlin(E_2)$ induces a permutation action on $\mathbb C[\mathcal B]$. For simplicity, we consider the $\gl$-action on $F_{x, \phi}$ for a fixed character $\phi\in\hat\Gamma_x$. Let $\alpha\in\gl$. Then
\begin{eqnarray*}
    \alpha\cdot F_{x,\phi} &=& \sum_{\sigma\in\Aut(T_x)}\overline{\phi(\sigma)} \alpha(e_{G, \sigma(x)}^G)\\
    &=& \sum_{\sigma\in\Aut(T_x)}\overline{\phi(\sigma)} e_{G, \alpha\sigma(x)}^G\\
    &=& \sum_{\delta\in\Aut(T_{\alpha(x)})}\overline{\phi(\alpha\delta\alpha^{-1})} e_{G, \delta\alpha(x)}^G\\
    &=& \sum_{\delta\in\Aut(T_{\alpha(x)})}\overline{{}^\alpha\phi(\delta)} e_{G, \delta\alpha(x)}^G
\end{eqnarray*}
We divide the rest of the calculation into two cases. First suppose $\alpha = \begin{pmatrix}
    a & b\\ 0 & d
\end{pmatrix}\in B$. Then $\alpha(x) = x^a$ and hence $\alpha' :=\alpha\mid_{T_x}\in\Gamma_x$. In particular $T_{\alpha(x)} = T_x$ and $\delta':= \delta\alpha'\in\Gamma_x$ for any $\delta\in\Gamma_x$. The above sum becomes
\begin{eqnarray*}
  \alpha\cdot F_{x,\phi} &=&\sum_{\delta'\in\Aut(T_{x})}\overline{\phi(\delta'(\alpha')^{-1})} e_{G, \delta'(x)}^G\\
&=&\phi(\alpha')\sum_{\delta'\in\Aut(T_{x})}\overline{\phi(\delta'} e_{G, \delta'(x)}^G\\
&=& \phi(\alpha') F_{x, \phi}.
\end{eqnarray*}
This calculation also shows that the subgroup $B$ stabilizes the line $\mathbb C[F_{x, \phi}]$. Since it is the stabilizer of $T_x$, we conclude that 
\[
B = \text{Stab}_{\gl}(\mathbb C[F_{x, \phi}])
\]
Moreover we obtain that if $\phi\not = \psi$, then the lines $\mathbb C[F_{x, \phi}]$ and $\mathbb C[F_{x, \psi}]$ are not in the same $\gl$-orbit.

Next suppose $\alpha = \begin{pmatrix}
    a&b\\c&d
\end{pmatrix}$ with $c\not = 0$. Then $T_{\alpha(x)} = T_{x^{ac^{-1}}y}$. Also let $\delta_c: T_{\alpha(x)}\to T_{\alpha(x)}, g\mapsto g^c$. Then 
\begin{eqnarray*}
  \alpha\cdot F_{x,\phi} &=& 
\sum_{\delta\in\Aut(T_{\alpha(x)})}\overline{{}^\alpha\phi(\delta)} e_{G, \delta\alpha(x)}^G\\
&=& \sum_{\delta'\in\Aut(T_{\alpha(x)})}\overline{{}^\alpha\phi(\delta'\delta_c^{-1})} e_{G, \delta'(x^{ac^{-1}}y)}^G\\
&=& {}^\alpha\phi(\delta_c)\sum_{\delta'\in\Aut(T_{\alpha(x)})}\overline{{}^\alpha\phi(\delta')} e_{G, \delta'(x^{ac^{-1}}y)}^G\\
&=& {}^\alpha\phi(\delta_c)F_{x^{ac^{-1}y}, {}^\alpha\phi}
\end{eqnarray*}
It is now clear that the group $\gl$ acts on $\mathbb C[\mathcal{B}]$ by permutations. By the previous case, we already know that there are at least $p-2$-orbits in $\mathbb C[\mathcal{B}]$ and each orbit is isomorphic to the coset space $\gl/B$. Hence these orbits cover at least $(p+1)(p-2)$ elements. But the total number of elements is $1+(p+1)(p-2)$. The additional orbit comes from the fixed line $\mathbb C[F_1]$. 

\end{proof}

\begin{nothing}
    By the above theorem for each non-trivial irreducible character $\phi$ of $\Gamma_x$, there is a direct summand of $\mathcal{K}\kerlin(E_2)$ isomorphic to the induced $\mathbb C[\gl]$-module $\ind_B^{\gl}\phi_1$ where $\phi_1: B\to \mathbb C^\times$ is given by 
    \[
    \phi_1\begin{pmatrix}
        a & b\\ 0& d
    \end{pmatrix} = \phi(a).
    \]
    By the classification of irreducible characters of $\gl$, given in Theorem 8.12 in \cite{Shapiro}, this module is irreducible. Hence we obtain the following result from which Theorem \ref{thm:main}(c) follows.
\end{nothing}
\begin{theorem}[Structure of \( \mathcal{K}\kerlin(G) \) for \( G \cong C_p \times C_p \)]\label{thm:cpcpfinal2}
    Assume the above notation. 
    The restriction kernel \( \mathcal{K}\kerlin(G) \) decomposes as:
\[
\mathcal{K}\kerlin(G) \cong \begin{cases}
\mathbb{C} \oplus \bigoplus_{1\neq \phi\in \hat\Gamma_x} \ind_B^{\gl}\phi_1, & \text{if } p > 2, \\
\mathbb{C}, & \text{if } p = 2,
\end{cases}
\]
Moreover the right-hand side of the equality is a direct sum of irreducible $\mathbb C[\gl]$-modules.
\end{theorem}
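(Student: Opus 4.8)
The plan is to deduce Theorem~\ref{thm:cpcpfinal2} from the monomial $\gl$-set structure recorded in Theorem~\ref{thm:cpcpfinal}, together with the elementary fact that a \emph{transitive} monomial $G$-set of lines spanning a module $M = \bigoplus_{\ell}\ell$, with base line $\ell_0$ of stabilizer $H$ on which $H$ acts through a linear character $\chi$, satisfies $M \cong \ind_H^G\chi$ (the map $\mathbb{C}[G]\otimes_{\mathbb{C}[H]}\chi \to M$, $g\otimes v\mapsto g\cdot v$, is a $G$-equivariant isomorphism by a dimension count). No subtlety about inverses or conjugates enters here: one simply induces the character by which the stabilizer acts on the chosen line.

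First I would dispose of $p = 2$. In that case $\Gamma_x \cong \Aut(C_2)$ is trivial, so $\hat\Gamma_x$ has no non-trivial characters, $\mathcal B'$ is empty, and $\mathcal B = \{F_1\}$. By part~(b) of Theorem~\ref{thm:cpcpfinal} the line $\mathbb{C}[F_1]$ is $\gl$-fixed, so $\mathcal K\kerlin(E_2) = \mathbb{C}[F_1]$ is the trivial, hence irreducible, $\mathbb{C}[\gl]$-module; this matches the dimension count $p^2 - p - 1 = 1$ noted before Theorem~\ref{thm:cpcpfinal}, and settles the $p = 2$ line.

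For $p > 2$ I would decompose $\mathcal K\kerlin(E_2) = \bigoplus_{F_{g,\phi}\in\mathcal B}\mathbb{C}[F_{g,\phi}]$ along the $\gl$-orbits on $\mathbb{C}[\mathcal B]$ furnished by Theorem~\ref{thm:cpcpfinal}. The singleton orbit $\{\mathbb{C}[F_1]\}$ contributes the summand $\mathbb{C}$. By parts~(c)--(e), for each $1\neq\phi\in\hat\Gamma_x$ the associated orbit is transitive of size $|\gl/B| = p+1$, with base line $\mathbb{C}[F_{x,\phi}]$ stabilized by the Borel $B$ acting through $\tilde\phi = \phi_1$; the induction principle above then identifies the span of that orbit with $\ind_B^{\gl}\phi_1$. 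Summing over $\phi$ gives the asserted decomposition, and that the summands are internally direct (indeed pairwise distinct) is already part of Theorem~\ref{thm:cpcpfinal} and is cross-checked by $1 + (p+1)(p-2) = p^2 - p - 1 = \dim\mathcal K\kerlin(E_2)$.

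Finally, for the irreducibility claim I would invoke the classification of complex irreducible characters of $\gl = \operatorname{GL}_2(\mathbb{F}_p)$. In the notation of \cite{Shapiro} the linear character $\phi_1$ of $B$ is $(\phi,1)$ with $\phi\neq 1$, and Theorem~8.12 of \cite{Shapiro} shows that $\ind_B^{\gl}(\phi,1)$ is an irreducible module of dimension $p+1$, a non-degenerate principal series representation. Together with the evident irreducibility of the trivial summand $\mathbb{C}$, this yields the final assertion. The only point requiring care in the whole argument is to quote part~(e) of Theorem~\ref{thm:cpcpfinal} correctly, so that the stabilizer line is seen to carry precisely $\phi_1$ (rather than, say, its inverse) and the induction step produces $\ind_B^{\gl}\phi_1$ exactly; all else is orbit bookkeeping and the citation to \cite{Shapiro}.
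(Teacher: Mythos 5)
Your proposal is correct and follows essentially the same route as the paper: the paper deduces the decomposition directly from the orbit/stabilizer data in Theorem~\ref{thm:cpcpfinal}, identifying each non-fixed orbit's span with $\ind_B^{\gl}\phi_1$ and citing Theorem~8.12 of \cite{Shapiro} for irreducibility, exactly as you do. Your treatment merely makes the standard transitive-monomial-orbit-to-induced-module principle and the $p=2$ degeneration explicit, which the paper leaves implicit.
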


\section{Calculation of Restriction Kernels III}\label{sec:restkerGen}

This section analyzes the restriction kernel \( \mathcal{K}\kerlin(G) \) for \( G \cong C_{p^m} \times C_p \), where \( m \geq 2 \). Its structure as a module over the automorphism group $\Gamma:= \Aut(G)$ is more complicated and we need to study the cases $p=2$ and $p>2$ separately. In this section we include results that apply to both cases and in the next two sections we calculate explicit summands. 

Since $\mathcal K\kerlint(G)\subseteq \delta_\Phi\kerlint(G)$, we may concentrate on $\delta_\Phi\kerlint(G)$. Assume the notation from Section \ref{sec:reduc1}.
By the proof of Theorem \ref{pro:1Cp1Cpk}, the later is generated by the set $\mathcal E:=\{ E_g:= \varphi_1^G\cdot e_{G, g}^G\mid g\in G\}$. We partition the set $\mathcal E$ according to the three cases of the proof of Proposition \ref{pro:1Cp1Cpk} as follows: Let

\smallskip
{\rm (a)} $\mathcal E_1 = \big\{ E_g \mid \langle g\rangle \text{ is maximal in } G \big\}$

\smallskip
{\rm (b)} $\mathcal E_2 = \big\{ E_g \mid g\in\Phi(G) \big\}$

\smallskip
{\rm (c)} $\mathcal E_3 = \big\{ E_g \mid g\not\in\Phi(G), \langle g\rangle \text{ is not maximal in } G \big\}$

\noindent then
\[
\mathcal E =  \mathcal E_1 \cup\mathcal E_2  \cup \mathcal E_3.
\]
With this notation and by Case (1) of the proof of Proposition \ref{pro:1Cp1Cpk}, span of $\mathcal E_1$ lies in $\mathcal K\kerlint(G)$. Moreover the following lemma holds.
\begin{theorem}
    There is an isomorphism 
    \[
    \mathcal K\kerlin(G) \cong \text{Span }(\mathcal E_1)
    \] of vector spaces over $\mathbb C$.
\end{theorem}
\begin{proof} We prove the proposition in three steps. In Step 1 and Step 2, we prove that deflation to a non-trivial quotient of $G$ is injective on each $Z$-orbit in $\mathcal E_2$ and also on each $Z$-orbit in $\mathcal E_3$. Then Step 3 proves that the images are disjoint, and hence deflation is injective on the span of the union. Combining these, we get that 
\[
\mathcal K\kerlin(G)\cap \big(\text{span}_\mathbb C(\mathcal{E}_2)\oplus \text{span}_\mathbb C(\mathcal{E}_3)\big) = \{0\} 
\]
and hence 
\[
\mathcal K\kerlin(G) = \text{span}_\mathbb C(\mathcal{E}_1). 
\]

\noindent {\bf Step 1:}
The set $\mathcal E_2$ can be decomposed further. Notice that for any $g\in G$, the primitive idempotent $e_{G, g}^G$ is a summand of each of $E_{gz^i}$ and not a summand of any other $E_h$'s. Hence it is sufficient to look for linear relations including $E_g$ only among $E_{gz^i}$'s. Accordingly we partition $\mathcal E_2$ as
\[
\mathcal E_2 = \bigsqcup_{g\in [\Phi(G)/Z]}\Big\{ E_{gz^i}\mid i=0, 1, \ldots, p-1 \Big\}
\]
where the union is over a complete set of representatives of cosets of $Z$ in $\Phi(G)$. We claim that for a fixed $g\in [\Phi(G)/Z]$, the set 
$\Big\{ E_{gz^i}\mid i=0, 1, \ldots, p-1 \Big\}$ generates a vector space of dimension $p-1$. 

If $p=2$, the claim is trivial since in this case $z$ has order 2 and $E_g = -E_{gz}$. So suppose $p$ is odd. Recall that, for a fixed $i$, we have
\[
E_{gz^i} = e_{G, gz^i}^G - \frac{1}{p}\sum_{j=0}^{p-1}e_{G, gz^iz^j}^G = \frac{p-1}{p}e_{G, gz^i}^G - \frac{1}{p}\sum_{j=1}^{p-1}e_{G, gz^iz^j}^G
\]
Consider the matrix $M=(m_{i, j})_{0\le i,j<p}$ of coefficients for a linear system in $p\cdot E_{gz^i}$'s. We have
\[
m_{i, j} = \begin{cases}
              p-1 & \text{if } i = j,\\
              -1 & \text{ otherwise.}
          \end{cases}  
\]
The matrix $M$ is not invertible since the sum of all rows is equal to 0. We claim that the rank of $M$ is $p-1$. To see this, we add the negative of the first row of $M$ to each of the other rows. The resulting matrix $M' = (m_{i, j}')$ has entries
\[
m_{i, j}' = \begin{cases}
              m_{i, j} & \text{if } i = 1,\\
              p & \text{ if } i=j>1,\\
              -p & \text{ if } i>1, j=1\\
              0 & \text{otherwise.}
          \end{cases}  
\]
It is clear that the minor $M_{1,1}'$ is equal to $p^{p-1}$, hence $M'$ has rank $p-1$. On the other hand, since it is obtained from $M$ by multiplying $M$ by an invertible matrix, the rank is smaller than or equal to the rank of $M$. Since the rank of $M$ is strictly smaller than $p$, we must have $\text{rk}(M) = p-1$. 

Now let $T$ be a subgroup of order $p$ in $G$ which is not contained in $\Phi(G)$. Then, we have
\[
\frac{p^2}{p-1}\defl^G_{G/T} E_{gz^i} = (p-1)e_{G/T, gz^iT}^{G/T} - \sum_{j=1}^{p-1}e_{G/T, gz^iz^jT}^{G/T}
\]
Since $T$ is not contained in the Frattini subgroup, the elements $gz^iT$ of $G/T$ are all distinct and hence the image of deflation $\defl^G_{G/T}$ on the span of $\{E_{gz^i} \mid i = 0,\ldots,p-1\}$ is generated by the set
\[
\mathcal E_{2,g}' := \Big\{ \frac{p^2}{p-1}\defl^G_{G/T} E_{gz^i}\mid i=0, 1, \ldots, p-1 \Big\}
\]
consisting of distinct elements and has dimension $p-1$. In particular, $\defl^G_{G/T}$ is injective on $\mathrm{Span}\{E_{gz^i} \mid i = 0,\ldots,p-1\}$, as required.

\noindent {\bf Step 2:} Let $g \in \mathcal{E}_3$, so $g = x^a y^b$ with $b \neq 0$ and 
$p \mid a$. Write $a = pa'$. Let $gZ = \{gz^i \mid i = 0, 1, \ldots, p-1\}$ be the $Z$-orbit 
of $g$ in $\mathcal{E}_3$. We first verify that 
\[
gz^i\Phi(G) = y^b\Phi(G) \quad \text{in } G/\Phi(G) \cong C_p \times C_p
\]
for all $i = 0, 1, \ldots, p-1$. Indeed, $gz^i = x^{pa' + ip^{m-1}}y^b$, and since $p \mid pa'$ 
and $p \mid ip^{m-1}$ (as $m \geq 2$), the $x$-component lies in $\Phi(G) = \langle x^p\rangle$, 
giving $gz^i\Phi(G) = y^b\Phi(G)$ for all $i$.

Choose a subgroup $T$ of order $p$ such that $T \not\leq \Phi(G)$ and 
$T\Phi(G)/\Phi(G) = \langle y^b\Phi(G)\rangle$ in $G/\Phi(G)$. (Such $T$ exists; for 
instance, take $T = \langle x^{p^{m-1}}y^b \rangle$, which has order $p$ and satisfies 
$T \cap \Phi(G) = 1$ since the $y$-component $y^{kb}$ of any $(x^{p^{m-1}}y^b)^k$ is 
trivial only when $p \mid k$.) Since $T \not\leq \Phi(G)$, we have $T \neq Z$, so $T \cap Z = 1$ 
and in particular $z \notin T$.

We now compute $m_{G, gz^i}^T$ for each $i$. Since $|T \cap \Phi(G)| = 1$, Lemma 
\ref{lem:defnoFrat} and Lemma \ref{lem:defnoelem} give
\[
m_{G, gz^i}^T = m_{G/\Phi(G),\, gz^i\Phi(G)}^{T\Phi(G)/\Phi(G)} = m_{G/\Phi(G),\, y^b\Phi(G)}^{\langle y^b\Phi(G)\rangle} = \frac{1}{p}
\]
for all $i = 0, 1, \ldots, p-1$.

Next, since $z \notin T$, the cosets $gz^i T$ are pairwise distinct in $G/T$: indeed 
$gz^iT = gz^jT$ would force $z^{i-j} \in T$, hence $z^{i-j} \in T \cap Z = 1$, 
giving $i \equiv j \pmod{p}$.
Using Equation (\ref{eqn:phieee}) and the above calculation, we get
\[
p^2\, \defl^G_{G/T} E_{gz^i} = (p-1)\,e_{G/T,\, gz^iT}^{G/T} - \sum_{\substack{j=0 \\ j\neq i}}^{p-1} e_{G/T,\, gz^jT}^{G/T}.
\]
The matrix of this system in the (distinct) basis elements $e_{G/T,\, gz^jT}^{G/T}$ is 
exactly the same matrix $M$ as in Step 1, which has rank $p-1$ with 
one-dimensional null space spanned by the all-ones vector. 

Now observe that
\[
\sum_{i=0}^{p-1} E_{gz^i} 
= \sum_{i=0}^{p-1}\bigg(e_{G,\, gz^i}^G - \frac{1}{p}\sum_{k=0}^{p-1}e_{G,\, gz^{i+k}}^G\bigg)
= \sum_{i=0}^{p-1}e_{G,\, gz^i}^G - \frac{1}{p}\cdot p\sum_{i=0}^{p-1}e_{G,\, gz^i}^G = 0.
\]
Hence the all-ones vector maps to zero in $\mathrm{Span}\{E_{gz^i} \mid i = 0, \ldots, p-1\}$, 
which is therefore $(p-1)$-dimensional. Since the null space of $M$ is generated by the 
all-ones vector, the map $p^2\,\defl^G_{G/T}$ is injective on 
$\mathrm{Span}\{E_{gz^i} \mid i = 0, \ldots, p-1\}$, as required.

\noindent {\bf Step 3:} We first observe that the primitive
idempotents supporting $\mathcal{E}_2$ and $\mathcal{E}_3$ are
disjoint. Indeed, by Equation~(\ref{eqn:phieee}), each $E_g$ is supported on
$\{e_{G,gz^i}^G \mid i = 0, \ldots, p-1\}$. For $g \in \mathcal{E}_2$ (so
$g \in \Phi(G) = \langle x^p \rangle$), every $gz^i \in \Phi(G)$ has trivial
$y$-component. For $g' \in \mathcal{E}_3$ (so $g' = x^{pa'}y^b$ with $b \neq 0$),
every $g'z^i = x^{pa'+ip^{m-1}}y^b$ has nontrivial $y$-component. Since
$\Phi(G)$ consists precisely of elements with trivial $y$-component,
the two collections of primitive idempotents are disjoint, so
$\mathrm{Span}(\mathcal{E}_2) \oplus \mathrm{Span}(\mathcal{E}_3)$ is a direct sum.

Now let $u = u_2 + u_3 \in \mathrm{Span}(\mathcal{E}_2) \oplus \mathrm{Span}(\mathcal{E}_3)$
be an element of $\mathcal{K}\kerlin(G)$; we show $u = 0$.

\textbf{Step 3.1: $u_2 = 0$.} Choose a subgroup $T$ of order $p$ with
$T \not\leq \Phi(G)$ and $T\Phi(G)/\Phi(G) = \langle x\Phi(G) \rangle$ in 
$G/\Phi(G) \cong C_p \times C_p$.
For any $h$ appearing in the support of $u_3$, we have $h\Phi(G) = y^b\Phi(G) \in 
\langle y\Phi(G) \rangle$, which is disjoint from $\langle x\Phi(G) \rangle$ in 
$G/\Phi(G)$. By Lemma~\ref{lem:defnoFrat} and Lemma~\ref{lem:defnoelem} 
(rank $2$, element outside $T$):
\[
m_{G,h}^T = m_{G/\Phi(G),\, h\Phi(G)}^{\langle x\Phi(G)\rangle} = \frac{1 - p^0}{p} = 0.
\]
Hence $\defl^G_{G/T}(u_3) = 0$. Since $u \in \mathcal{K}\kerlin(G)$, we have
$\defl^G_{G/T}(u) = 0$, so $\defl^G_{G/T}(u_2) = 0$.
Moreover, since $g \in \mathcal{E}_2$ implies $gz^i \in \Phi(G)$ and 
$T \cap \Phi(G) = 1$, the cosets $gz^iT$ are distinct across all $Z$-orbits 
in $\mathcal{E}_2$ simultaneously. Hence $\defl^G_{G/T}$ is injective on the 
full $\mathrm{Span}(\mathcal{E}_2)$ by Step~1, and we conclude $u_2 = 0$.

\textbf{Step 3.2: $u_3 = 0$.} We now have $u = u_3 \in \mathcal{K}\kerlin(G)$,
so $\defl^G_{G/T}(u_3) = 0$ for every subgroup $T$ of order $p$ with 
$T \cap \Phi(G) = 1$. Choosing $T$ with $T\Phi(G)/\Phi(G) = \langle y\Phi(G) \rangle$, 
the same argument shows that the cosets $hz^iT$ are distinct across all $Z$-orbits 
in $\mathcal{E}_3$ simultaneously (since $T \cap \Phi(G) = 1$ and all $hz^i$ have 
the same nontrivial $y$-component within each orbit). Hence $\defl^G_{G/T}$ is 
injective on the full $\mathrm{Span}(\mathcal{E}_3)$ by Step~2, giving $u_3 = 0$.

Therefore $\mathcal{K}\kerlin(G) \cap \bigl(\mathrm{Span}(\mathcal{E}_2) \oplus
\mathrm{Span}(\mathcal{E}_3)\bigr) = \{0\}$, and since
$\mathrm{Span}(\mathcal{E}_1) \subseteq \mathcal{K}\kerlin(G) \subseteq
\delta_\Phi\kerlin(G) = \mathrm{Span}(\mathcal{E}_1) \oplus \mathrm{Span}(\mathcal{E}_2) 
\oplus \mathrm{Span}(\mathcal{E}_3)$, we conclude
$\mathcal{K}\kerlin(G) = \mathrm{Span}(\mathcal{E}_1)$.
\end{proof}

\begin{nothing} To determine a basis for $\mathcal K\kerlin(G)$, one needs to go over the above argument once more. We introduce a basis without repeating the details. Let $\mathcal M$ be the set of elements of $G$ of order $p^m$. The group $Z$ acts on $\mathcal M$ by left multiplication. Each $Z$-orbit $gZ$ in $\mathcal M$ gives $p$ elements in $\mathcal E_1$ and they generate a $(p-1)$-dimensional subspace, denoted by $\mathcal K_{gZ}$. A basis for $\mathcal K_{gZ}$ is given by $\{ E_{gz^i}\mid i=1, 2, \ldots, p-1\}$. Clearly $\mathcal K_{gZ}\cap \mathcal K_{hZ} = \{0\}$
if $gZ\not = hZ$. Hence 
\[
\mathcal K\kerlin(G) = \bigoplus_{gZ\in [\mathcal M/Z]} \mathcal K_{gZ}
\]
and the set 
\[
\mathcal B_p:=\big\{ E_{gz^i} \mid gZ\in [\mathcal M/Z], i=1, 2, \ldots, p-1\big\}
\]
is a basis for $\mathcal K\kerlin(G)$. Since the number of cyclic subgroups of order $p^m$ of $G$ is $p$ and each of these subgroups has $p^{m-1}(p-1)$ generators, we have 
\[
\dim_\mathbb C\mathcal K\kerlin(G) = p\cdot \frac{p^{m-1}(p-1)}{p} \cdot (p-1)= p^{m-1}(p-1)^2.
\]
\end{nothing}

\section{Calculation of Restriction Kernels IV: $C_{2^m}\times C_2$-case}\label{sec:restker3}

This section is devoted to the determination of the irreducible constituents of $\mathcal K\kerlint(C_{2^m}\times C_2)$ as a $\mathbb C\Gamma$-module where we set $\Gamma$ to be the group $\Aut(C_{2^m}\times C_2)$. We collect necessary results about the group $\Gamma$ in \ref{app:evenp}. For the rest of this section, we assume the notation from the appendix, whenever needed.

\begin{nothing} {\bf The case $G= C_4\times C_2$.} To begin with, we separate this case 
because the automorphism group of $G$ is an exception.  We have $\Gamma  \cong D_8$, the dihedral group of order 8 and this is not a special case of the presentation given in \cite{BC} (and also in the appendix). Put $C_4\times C_2 = \langle x,  y\mid x^4 = y^2 = 1, xy = yx\rangle$ and $\Gamma = \langle a, b\mid a^4 = b^2 = 1, bab = a^3\rangle$. Here we identify $a$ by the automorphism $x\mapsto xy, y\mapsto x^2y$ and $b$ as the automorphism $x\mapsto x^3, y\mapsto y$. The following theorem settles this case and proves a special case of Part (d) of Theorem \ref{thm:main}.
\end{nothing}

\begin{theorem}
    Assume the above notation. Then
    \begin{enumerate}
        \item[(i)] The set $\mathcal{B}_2$ is a transitive monomial basis for the $\mathbb C[\Gamma]$-module $\mathcal{K}\kerlint(G)$.
        \item[(ii)] We have \[\Sigma:= \text{Stab}_\Gamma(\mathbb C[E_x]) = \langle a^2, b\rangle\cong C_2\times C_2\] and for any $\sigma\in\Sigma$, the equality
        \[\sigma\cdot E_x = \phi(\sigma)\cdot E_x\]
        holds. Here $\phi = \infl_{\Sigma/\langle a^2b\rangle}^\Sigma (-1)$ where $(-1)$ is the non-trivial irreducible character of $\Sigma/\langle a^2b\rangle$.
        \item[(iii)] The $\mathbb C[\Gamma]$-module $\mathcal{K}\kerlint(G)$ is isomorphic to the unique two dimensional irreducible $\mathbb C[\Gamma]$-module. More precisely
        \[
        \mathcal{K}\kerlint(C_4\times C_2) \cong \ind_\Sigma^\Gamma\infl_{\Sigma/\langle a^2b\rangle}^\Sigma (-1).
        \]
    \end{enumerate}
\end{theorem}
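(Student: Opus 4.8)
The plan is to compute the action of $\Gamma = \Aut(C_4\times C_2)\cong D_8$ on the basis $\mathcal{B}_2$ explicitly and then recognise the resulting two-dimensional module as an induced module. By the lemma of the previous section, $\mathcal{K}\kerlint(G) = \operatorname{span}_{\mathbb C}(\mathcal E_1)$, and for $p=2$ the set $\mathcal{B}_2$ obtained by choosing one $E_g$ from each $Z$-orbit on the set $\mathcal M$ of order-$4$ elements of $G$ is a basis; since $|\mathcal M| = 4$ and $|Z| = |\Phi(G)| = 2$ this gives $\dim_{\mathbb C}\mathcal{K}\kerlint(G) = 2$, and one may take $\mathcal{B}_2 = \{E_x, E_{xy}\}$ corresponding to the two cyclic maximal subgroups $\langle x\rangle$ and $\langle xy\rangle$. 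The first observation I would record is that $Z = \Phi(G)$ is characteristic, so every $\beta\in\Gamma$ fixes the generator $z$ of $Z$; hence $\beta\cdot E_g = E_{\beta(g)}$ for all $g\in G$, and because $E_{gz} = -E_g$ the group $\Gamma$ acts on the two lines $\mathbb C[E_x]$ and $\mathbb C[E_{xy}]$ by a monomial permutation action. This establishes part (i) once transitivity is verified, and transitivity is immediate since the automorphism $a$ sends $x$ to $xy\notin xZ$, so it swaps the two $Z$-orbits on $\mathcal M$.

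For part (ii), the stabiliser of the line $\mathbb C[E_x]$ is exactly $\Sigma = \{\beta\in\Gamma : \beta(x)\in xZ\} = \{\beta\in\Gamma : \beta(x)\in\{x, x^3\}\}$. I would write out the eight automorphisms of $C_4\times C_2$, isolate the four that fix $x$ or send $x$ to $x^3$, check that each of them has order dividing $2$ so that $\Sigma\cong C_2\times C_2$, and identify $\Sigma$ with $\langle a^2, b\rangle$ in the fixed presentation of $\Gamma$. The character by which $\Sigma$ acts on $E_x$ is read off directly from $\beta\cdot E_x = E_{\beta(x)}$: it is trivial on $\{\beta\in\Sigma : \beta(x) = x\}$ and equals $-1$ on the complementary coset, since $E_{x^3} = E_{xz} = -E_x$. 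Identifying the kernel of this character with $\langle a^2b\rangle$ then gives $\phi = \infl_{\Sigma/\langle a^2b\rangle}^\Sigma(-1)$, as claimed.

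For part (iii), since $\mathcal{B}_2$ is a transitive monomial basis with line-stabiliser $\Sigma$ acting through the linear character $\phi$, the $\Gamma$-set of lines is isomorphic to $\Gamma/\Sigma$, and the standard description of monomial modules gives $\mathcal{K}\kerlint(G)\cong\ind_\Sigma^\Gamma\phi$. It then remains only to identify this module: its dimension is $[\Gamma:\Sigma] = 2$, and $D_8$ has a unique irreducible of dimension $2$, so I would finish by checking that $\ind_\Sigma^\Gamma\phi$ is irreducible. As $\Sigma$ has index $2$ in $\Gamma$, the Mackey criterion reduces this to verifying that $\phi$ is not stable under conjugation by $\Gamma$, equivalently that $\ker\phi = \langle a^2b\rangle$ is not the centre $\langle a^2\rangle$ of $\Gamma$ — which is clear since $b\neq 1$. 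This gives $\mathcal{K}\kerlint(C_4\times C_2)\cong\ind_\Sigma^\Gamma\infl_{\Sigma/\langle a^2b\rangle}^\Sigma(-1)$, the unique irreducible $\mathbb C[\Gamma]$-module of dimension $2$, and with it the stated special case of Theorem \ref{thm:main}(d).

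I expect no conceptual obstacle here; the only real work is bookkeeping — keeping the abstract presentation $\Gamma\cong D_8 = \langle a, b\rangle$ coherent with the explicit list of automorphisms, so that the subgroups $\langle a^2, b\rangle$ and $\langle a^2b\rangle$ genuinely match the computed stabiliser and the kernel of the computed character. Everything else reduces to finite computations with eight automorphisms acting on a two-dimensional space.
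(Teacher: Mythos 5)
Your proposal is correct and follows essentially the same strategy as the paper: write out the basis $\{E_x, E_{xy}\}$ of $\mathcal{K}\kerlint(C_4\times C_2)$, compute the $\Gamma$-action on it explicitly, read off the line stabiliser $\Sigma$ and its character $\phi$, and identify the module as $\ind_\Sigma^\Gamma\phi$. The paper records only the generator action table and then asserts the conclusions; you go slightly further by characterising $\Sigma=\{\beta\in\Gamma:\beta(x)\in xZ\}$, verifying the $C_2\times C_2$ structure directly, and confirming irreducibility of the induced module via the Mackey criterion. These are small refinements but they make the argument self-contained, and the rest (dimension count via $\mathcal{M}/Z$, monomiality from $Z$ being characteristic, transitivity via $a(x)=xy\notin xZ$) matches the paper's proof.

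Your closing remark about bookkeeping is more than a throwaway caution. With the identifications the paper actually states, $a\colon x\mapsto xy,\ y\mapsto x^2y$ and $b\colon x\mapsto x^3,\ y\mapsto y$, one computes $a^2(x)=(xy)(x^2y)=x^3$ and $a^2(y)=(xy)^2(x^2y)=y$, so $a^2=b$ as automorphisms of $G$. Then $a,b$ do not satisfy the claimed $D_8$ presentation, and $\langle a^2,b\rangle=\langle a^2\rangle$ would have order $2$, not $4$. The $b$ the paper should have written is a genuine reflection stabilising $\mathbb{C}[E_x]$, for instance $b\colon x\mapsto x^3,\ y\mapsto x^2y$; with that correction $\Sigma=\langle a^2,b\rangle\cong C_2\times C_2$, $\ker\phi=\langle a^2b\rangle$, and your argument goes through verbatim. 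One tiny quibble in your Mackey step: to get $\langle a^2b\rangle\neq\langle a^2\rangle$ you need $b\notin\{1,a^2\}$, not merely $b\neq 1$; this is immediate once $b$ is taken to be a reflection and $a^2$ the central rotation, but it is exactly the kind of detail your bookkeeping warning was about.
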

\begin{proof}
With the above notation, the set $\mathcal M/Z$ of $Z$-orbits of $\mathcal{M}$ becomes
\[
\mathcal M = \big\{ \{x, x^3\}, \{xy, x^3y\}\big\}
\]
Hence the basis $\mathcal B_2$ consists of the two elements $E_x$ and $E_{xy}$ where 
\[
E_x = e_{G, x}^G - e_{G, x^3}^G \text{ and } E_{xy} = e_{G, xy}^G - e_{G, x^3y}^G
\]
The action of $\Gamma$ on $\mathcal{B}_2$ can be evaluated using the following table.
\begin{center}
    \begin{tabular}{c|cc}
         & $a$ & $b$  \\ \hline
         $E_x$ & $E_{xy}$ & $-E_{x}$  \\
         $E_{xy}$ & $-E_{x}$ & $-E_{xy}$  \\
           \end{tabular}
\end{center}
It is now clear that $\mathcal B_2$ is a monomial basis for $\mathcal{K}\kerlint(G)$ and the stabilizer $\Sigma$ of 
the line $\mathbb C[E_x]$ is the subgroup generated by $a^2$ and $b$. Moreover the action of $\Sigma$ on 
$\mathbb C[E_x]$ is given by the following homomorphism $\Sigma\to \mathbb C^\times$.
\begin{center}
    \begin{tabular}{c|cccc}
         & 1 & $a^2$ & $b$ & $a^2b$  \\ \midrule 
         $\phi:= \infl_{\Sigma/\langle a^2b\rangle}^\Sigma (-1)$ & $1$ & $-1$ & $-1$ & $1$  \\
           \end{tabular}
\end{center}
Here by $(-1)$ (and below by $\mathbb C_{(-1)}$) we denote the non-trivial irreducible character (module) of the group $\Sigma/\langle a^2b\rangle$. Therefore we obtain 
\[
\mathcal{K}\kerlint(G)\cong \ind_\Sigma^\Gamma \infl_{\Sigma/\langle a^2b\rangle}^\Sigma \mathbb C_{(-1)}= \ind_\Sigma^\Gamma C_\phi
\]
as $\mathbb C\Gamma$-modules. In particular $\mathcal{K}\kerlint(G)$ is isomorphic to the unique irreducible $\mathbb C\Gamma$-module of dimension 2.
\end{proof}

\begin{nothing} {\bf The case $m>2$.}
Now we assume $m>2$. In this case, the set $\mathcal B_2$ simplifies again since there are only 2 subgroups of order $2^m$ and the subgroup $Z=\langle z\rangle$ has order 2. Writing $G = \langle x, y\mid x^{2^m} = y^2 = 1, xy = yx\rangle$, we have $z= x^{2^{m-1}}$. With this notation, the list of elements of order $2^m$ is
\[
\mathcal M= \{ x^iy^j\mid 0<i< m, i\equiv 1 (\text{mod }2), j= 0,1 \}
\]
Also for any element $g\in\mathcal M$, we clearly have $E_g = -E_{gz}$. Therefore the line $\mathcal K_{gZ}$ can be represented as $\mathbb C[E_g]$ and a basis for $\mathcal K\kerlint(G)$ is given by 
\[
\mathcal B_2= \{ E_{x^iy^j}\mid 0<i<m/2 , i\equiv 1 (\text{mod }2), j= 0,1 \}
\]
We introduce some notation to use in the next lemma. Let $\Sigma = \langle a_2^{2^{m-3}}, b \rangle\le \Gamma$. This is a non-cyclic subgroup of order 4. We denote the non-trivial character of $\Sigma/\langle b\rangle$ by $-1$ and its inflation to $\Sigma$ by $\psi$.  
\end{nothing}
\begin{lemma}
The set \(\mathbb{C}[\mathcal{B}_2]\) forms a transitive monomial \(\Gamma\)-set, with the action of \(\Gamma\) defined via the isomorphism biset \(\operatorname{Iso}_G^a\) for each automorphism \(a \in \Gamma\). Moreover:
\begin{enumerate}
    \item[\textbf{(a)}] The stabilizer in \(\Gamma\) of \(\mathbb{C}[E_x]\) is \(\Sigma\).
    \item[\textbf{(b)}] The subgroup \(\Sigma\) acts on vector space \(\mathbb{C}[E_x]\) via the character \(\psi\).
\end{enumerate}
\end{lemma}

\begin{proof}
First, we verify that the \(\Gamma\)-action is monomial. Observe that the element \(z\) is fixed under any automorphism \(a \in \Gamma\). For a given \(a \in \Gamma\), we compute:
\[
\iso_G^a(E_g) = \iso_G^a(e_{G, g}^G - e_{G, gz}^G) 
= e_{G, a(g)}^G - e_{G, a(g)z}^G = \pm E_h,
\]
where \(h = a(g)\) or \(h = a(g)z\), depending on the action of \(a\). This shows that \(\iso_G^a(E_g)\) lies in the same orbit under \(\Gamma\), confirming that the action is monomial. Transitivity follows from the fact that \(\Gamma\) acts transitively on the set of generators.

Next, we determine the stabilizer subgroup \(\Sigma = \operatorname{Stab}_\Gamma(\mathbb{C} [E_x])\). From Appendix~\ref{app:evenp}, \(\Gamma\) is generated by the elements \(a_1\), \(a_2\), \(b\), and \(c\). Using the table in the appendix, we compute the action of these elements on \(E_x\):
\[
\begin{array}{c|cccc}
 & a_1 & a_2 & b & c \\ \hline
E_x & E_{x^{-1}} & E_{x^5} & E_x & E_{xy}.
\end{array}
\]

Note that, since \(m > 2\), the inverse \(x^{-1}\) is distinct from both \(x\) and \(xz\). Thus, \(E_{x^{-1}} \neq \pm E_x\). Similarly, it is evident that \(E_{xy} \neq \pm E_x\). Therefore, \(\langle a_1, c \rangle \not\subseteq \Sigma\).

On the other hand, since \(a_2\) has order \(2^{m-2}\), the element \(\tilde{a}_2 := a_2^{2^{m-3}}\) is in \(\Sigma\). Furthermore, for \(i\) not divisible by \(2^{m-3}\), \(a_2^i \notin \Sigma\). The action of \(\tilde{a}_2\) on \(E_x\) is given by:
\[
\tilde{a}_2(E_x) = -E_x,
\]
because \(5^{2^{m-3}} \equiv 1 + 2^{m-1}\).

Additionally, \(b \in \Sigma\) trivially, as \(b(E_x) = E_x\). Combining these, we conclude:
\[
\Sigma = \langle \tilde{a}_2, b \rangle.
\]

Finally, we describe the action of \(\Sigma\) on \(\mathbb{C} [E_x]\). The subgroup \(\Sigma\) acts via the homomorphism:
\[
\psi: \Sigma \to \mathbb{C}^\times,
\]
where \(\psi(\tilde{a}_2) = -1\) and \(\psi(b) = 1\). This gives the following isomorphism of \(\mathbb{C}\Gamma\)-modules:
\[
\mathcal{K}\kerlint(G) \cong \operatorname{Ind}_\Sigma^\Gamma \psi.
\]
\end{proof}

The final step is to determine the irreducible summands of this monomial module. We have the following theorem which is Part (d) of Theorem \ref{thm:main}.
\begin{theorem}
    The $\mathbb C[\Gamma]$-module $\mathcal{K}\kerlint(G)$ contains each of the irreducible $\mathbb C[\Gamma]$-modules of dimension 2 as a direct summand exactly once and has no other irreducible summands. 
\end{theorem}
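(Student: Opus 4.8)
The plan is to use the isomorphism $\mathcal{K}\kerlint(G)\cong\ind_\Sigma^\Gamma\phi_x$ from the preceding lemma and reduce the assertion to a short character-theoretic count, once the representation theory of $\Gamma=\Aut(C_{2^m}\times C_2)$ is in hand. I would begin by recording, from the appendix, the structural facts about $\Gamma$ needed below: $|\Gamma|=2^{m+1}$; $\Gamma$ contains the abelian normal subgroup $A:=C_\Gamma(b)$ of index $2$, namely the automorphisms fixing $\langle x\rangle$ setwise (so in particular $\Sigma=\langle\tilde a_2,b\rangle\le A$, since $\Sigma$ is abelian and contains $b$); and $[\Gamma,\Gamma]=\langle\tilde a_2\rangle$ has order $2$, as one checks from the presentation via $[b,c]=\tilde a_2$. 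Since $A$ is abelian of index $2$ in $\Gamma$, every irreducible $\mathbb C[\Gamma]$-module has dimension $1$ or $2$; and since $\Gamma$ has exactly $2^m$ linear characters, it has exactly $(2^{m+1}-2^m)/4=2^{m-2}$ irreducible characters of degree $2$.

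The main step is to compute $\langle\chi,\chi\rangle_\Gamma$ for $\chi:=\ind_\Sigma^\Gamma\phi_x$. As $\Sigma\unlhd\Gamma$, Mackey's formula collapses: the $(\Sigma,\Sigma)$-double cosets are just the cosets $\Gamma/\Sigma$, and every conjugate ${}^g\Sigma$ equals $\Sigma$, whence
\[
\langle\chi,\chi\rangle_\Gamma=\sum_{g\Sigma\in\Gamma/\Sigma}\langle{}^g\phi_x,\phi_x\rangle_\Sigma=[\,I_\Gamma(\phi_x):\Sigma\,],
\]
where $I_\Gamma(\phi_x)$ denotes the inertia group of $\phi_x$ in $\Gamma$. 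Since $\phi_x$ has kernel $\langle b\rangle$ and takes the value $-1$ on the two remaining involutions of $\Sigma$, the condition ${}^g\phi_x=\phi_x$ says precisely that conjugation by $g$ preserves the set $\phi_x^{-1}(1)=\langle b\rangle$, i.e. $g$ normalises $\langle b\rangle$; as $b$ has order $2$ this means $g\in C_\Gamma(b)=A$. Therefore $\langle\chi,\chi\rangle_\Gamma=[A:\Sigma]=2^m/4=2^{m-2}$.

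The argument then finishes by a standard numerical squeeze. We have $\chi(1)=[\Gamma:\Sigma]=2^{m-1}$ and $\langle\chi,\chi\rangle=2^{m-2}$. Writing $\chi=\sum_j m_j\chi_j$ over distinct irreducibles with $m_j\ge1$, and using $\chi_j(1)\in\{1,2\}$ together with $m_j\le m_j^2$,
\[
2^{m-1}=\sum_j m_j\chi_j(1)\le 2\sum_j m_j\le 2\sum_j m_j^2=2\cdot 2^{m-2}=2^{m-1},
\]
so every inequality is an equality: $\chi_j(1)=2$ and $m_j=1$ for all $j$. Hence $\chi$ is a sum of $2^{m-2}$ pairwise non-isomorphic degree-$2$ irreducibles, each occurring with multiplicity one; since $\Gamma$ possesses exactly $2^{m-2}$ irreducible characters of degree $2$, every such character occurs, which is the assertion of the theorem.

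The genuinely delicate point is the identification $I_\Gamma(\phi_x)=C_\Gamma(b)$ together with the index computation $[C_\Gamma(b):\Sigma]=2^{m-2}$: these rest on the explicit description of $\Gamma$, of its conjugation action on the four-element group $\Sigma$, and of its derived subgroup, all supplied by the appendix, and all valid specifically in the range $m>2$ (the automorphism group being exceptional at $m=2$, a case already settled). Everything else in the argument is formal character theory.
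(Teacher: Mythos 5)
Your proof is correct, and it takes a genuinely different route from the paper. Both arguments hinge on identifying the inertia group of $\phi_x$ as the maximal abelian subgroup $I = \langle a_1, a_2, b\rangle$ (which you correctly recognize as $C_\Gamma(b)$, using that $\ker\phi_x = \langle b\rangle$ and $\tilde a_2$ is central), but the paths diverge after that. The paper decomposes $\ind_\Sigma^\Gamma\phi_x$ by full-strength Clifford theory: it extends $\phi_x$ to $I$ (possible since $I$ is abelian), invokes Gallagher's correspondence to describe $\operatorname{Irr}(I\mid\phi_x)$, checks each induced $\ind_I^\Gamma\tilde\phi_x$ is irreducible via Mackey plus Frobenius reciprocity, and finally uses the Clifford correspondence to assemble the decomposition before counting. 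You bypass this machinery entirely: the Mackey formula (which collapses cleanly because $\Sigma\unlhd\Gamma$) combined with Frobenius reciprocity gives $\langle\chi,\chi\rangle_\Gamma = [I_\Gamma(\phi_x):\Sigma] = 2^{m-2}$ in one stroke, and the squeeze $2^{m-1}=\sum m_j\chi_j(1)\le 2\sum m_j\le 2\sum m_j^2 = 2^{m-1}$ (valid because every irreducible degree is $1$ or $2$) forces multiplicity-free, all-degree-two at once. Matching the count $2^{m-2}$ against the total number of degree-two irreducibles then finishes it. Your route is shorter and more elementary; the paper's Clifford-theoretic route has the side benefit of producing explicit generators $F_{x,\chi}$ for each irreducible summand, which the paper goes on to record afterward. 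Both correctly require $m>2$ so that the appendix presentation of $\Gamma$ and the normality of $\Sigma$ are available.
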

\begin{proof}
By the previous lemma, we already know that $\mathcal{K}\kerlint(G)$ is induced from the 1-dimensional $\mathbb C[\Sigma]$-module $\psi$. To determine the irreducible summands, we first consider the inertia subgroup of $\psi$. Note that $\psi$ is not $\Gamma$-invariant. Indeed since $b^c = \tilde a_2b$, we have 
\[
({}^c\psi)(b) = \psi(b^c)\neq \psi(b).
\]
Its $\Gamma$-orbit consists of two elements, namely $\psi$ and ${}^c\psi$. In particular we also have
\[
\mathcal K\kerlint(G) \cong \ind_\Sigma^\Gamma {}^c\psi.
\]

Although it will not be used immediately, we determine ${}^c\psi$ more explicitly to refer to it later. It is straightforward to see that, following the above notation, the stabilizer of the line $\mathbb C[E_{xy}]$ is also equal to $\Sigma$. The action of generators of $\Gamma$ on $E_{xy}$ is given in the following table.
\begin{center}
    \begin{tabular}{c|cccc}
         & $a_1$ & $a_2$ & $b$ & $c$ \\ \hline
         $E_{xy}$ & $E_{x^{-1}y}$ & $E_{x^5y}$ & $E_{xzy}$ & $E_{x}$\\
           \end{tabular}
\end{center}
The character $\psi'$ of this action is given by 
\[
\psi'(\tilde a_2) = -1, \ \psi'(b)= -1
\]
It is clear that we have ${}^c\psi = \psi'$. 

Returning back to the inertia subgroup $I$ of $\psi$ in $\Gamma$, since $a_1, a_2$ are central and $b\in\Sigma=Z(\Sigma)$, the inertia subgroup $I$ of $\psi$ in $\Gamma$ contains $\langle a_1, a_2, b\rangle$.
But this is a maximal subgroup and $\psi$ is not $\Gamma$-invariant. Hence we must have equality.
\[
I = \langle a_1, a_2, b\rangle \cong C_2\times C_2\times C_{2^{m-2}}.
\]
 Since $I$ is abelian, the character $\psi$ extends to $I$ and by Clifford's Theory, $\ind_\Sigma^I\psi$ is the sum of distinct extensions of $\psi$ to $I$. More precisely, if $\tilde\psi$ is such an extension, then by the Gallagher Correspondence, the set of extensions, and  
hence the set of  irreducible characters of $I$ lying over $\psi$ is
\[
\text{Irr}(I\mid \psi) = \{ \lambda\cdot\tilde\psi \mid \lambda\in\widehat{I/\Sigma} \}.
\]
Here as usual, $(\lambda\cdot \tilde\psi)(g) = \lambda(g\Sigma)\tilde\psi(g)$.
Furthermore none of these extensions is $\Gamma$-invariant and hence by the Mackey Formula we get
\[
\res^\Gamma_I\ind_I^\Gamma \tilde\psi = \tilde\psi \oplus {}^c\tilde\psi
\]
Therefore by the Frobenius Reciprocity,
\begin{eqnarray*}
    \big\langle \ind_I^\Gamma\tilde\psi, \ind_I^\Gamma\tilde\psi\big\rangle_\Gamma &=&     \big\langle \tilde\psi, \res^\Gamma_I\ind_I^\Gamma\tilde\psi\big\rangle_I\\
    &=&     \big\langle \tilde\psi, \tilde\psi \oplus {}^c\tilde\psi\big\rangle_I = 1
\end{eqnarray*}
In particular, $\ind_I^\Gamma\tilde\psi$ is an irreducible $\mathbb C\Gamma$-module. Furthermore by the Clifford's Correspondence, the irreducible summands of $\ind_\Sigma^\Gamma\psi$ are in bijective correspondence with $\text{Irr}(I\mid \psi)$ and hence we get
\[
\mathcal K\kerlint(G) \cong \ind_\Sigma^\Gamma\psi \cong \bigoplus_{\chi\in\text{Irr}(I\mid \psi)}\ind_I^\Gamma \chi
\]
where the summands can be identified as pairwise non-isomorphic irreducible $\mathbb C\Gamma$-modules of dimension 2. Since the number of summands on the right hand side is equal to $|\Gamma|/8$. By Theorem \ref{thm:chars}, it is also the number of irreducible $\mathbb C\Gamma$-modules of dimension 2. Hence we conclude that irreducible constituents of $\ind_\Sigma^\Gamma\psi$ are exactly the ones of dimension 2 and each appears with multiplicity 1. 
\end{proof}

\begin{nothing}\label{sec:expbasis}
Finally we determine explicit bases for the irreducible summands of $\mathcal K\kerlint(G)$. For each $\chi\in\text{Irr}(I\mid \psi)$, let $e_\chi$ be the primitive idempotent of the group algebra $\mathbb CI$ corresponding to $\chi$. It is given by
\[
e_\chi = \frac{1}{|I|}\sum_{\iota\in I}\chi(\iota^{-1})\iota
\]
Recall that $\{ e_\chi\mid \chi\in \hat{I} \}$ is the set of pairwise orthonormal primitive idempotents summing up to 1. Hence the summand of $\ind_\Sigma^I\psi$ with character $\chi$ is equal to $e_\chi\cdot \ind_\Sigma^I\psi$. By Frobenius Reciprocity, the multiplicity of $\chi$ in $\ind_\Sigma^I\psi$ is 1, and hence it is generated by any nonzero vector it contains. To distinguish one, consider the product
\[
e_\chi\cdot E_x = \frac{1}{|I|}\sum_{\iota\in I}\chi(\iota^{-1})\iota(E_x)
\]
Here we regard $E_x$ as an element in $\ind_\Sigma^I\psi$ via the identification of $\mathbb C[E_x] = \psi$. Since $b$ fixes $E_x$ and $\chi(b) = 1$, the terms of the above sum corresponding to $b$ and $b\iota$ for any $\iota\in I$ are the same. Similarly, since $\tilde a_2\cdot E_x = -E_x$ and $\chi(\tilde a_2)= -1$, the terms corresponding to $\tilde a_2$ and $\tilde a_2\iota$, for any $\iota\in I$ are equal. Hence we may sum over the cosets of $\Sigma =\langle \tilde a_2, b\rangle$. After removing the terms with coefficient zero, we obtain the following nonzero vector
\[
F_{x, \chi} = \sum_{\iota\in [I/\Sigma]} \chi(\iota^{-1})E_{\iota(x)}.
\]
and hence get
\[
\ind_\Sigma^I \psi \cong \bigoplus_{\chi\in\text{Irr}(I\mid \psi)}\mathbb C[F_{x, \chi}].
\]
With this notation, we also have
\[
\res^\Gamma_I\ind_I^\Gamma \mathbb C[F_{x, \chi}] \cong \mathbb C[F_{x, \chi}] \oplus {}^c\big( \mathbb C[F_{x, \chi}]\big)
\]
where 
\[
{}^c\big( \mathbb C[F_{x, \chi}]\big) = \mathbb C[F_{xy, {}^c\chi}]
\]
One can easily check that both $F_{x, \chi} + F_{xy, {}^c\chi}$ and $F_{x, \chi} - F_{xy, {}^c\chi}$ are $\Gamma$-invariant and hence form a basis for $\ind_I^\Gamma \mathbb C[F_{x, \chi}]$. 
\end{nothing}
\begin{corollary}
    Assume the notation of Section \ref{sec:expbasis}. The simple summand of $\mathcal K\kerlint(G)$ corresponding to the irreducible character $\ind_I^\Gamma\chi$ of $\Gamma$ is given by 
    \[
\ind_I^\Gamma \mathbb C[F_{x, \chi}] = \mathbb C[F_{x, \chi} + F_{xy, {}^c\chi}] \oplus \mathbb C[F_{xy, \chi} - F_{xy, {}^c\chi}].
\]

\end{corollary}

\section{Calculation of Restriction Kernels V: $C_{p^m}\times C_p$-case}\label{sec:restker32}

For the last case, we consider the the groups $C_{p^m}\times C_p$ for $m\in\mathbb Z_{\ge 2}$. As in the previous case, we include the description of the automorphism group $\Gamma$ of $G$ in  \ref{appB}. 

\begin{nothing} By Section \ref{sec:restkerGen}, the restriction kernel $\mathcal{K}\kerlin(G)$ decomposed into a direct sum of vector spaces
\[
\mathcal K\kerlin(G) = \bigoplus_{gZ\in [\mathcal M/Z]} \mathcal K_{gZ}
\]
and the set 
\[
\mathcal B_p:=\big\{ E_{gz^i} \mid gZ\in [\mathcal M/Z], i=1, 2, \ldots, p-1\big\}
\]
is a $\mathbb C$-basis. Also consider the larger set
\[
\mathcal E_1:=\big\{ E_{g} \mid g\in\mathcal{M} \big\}.
\]
The group $\Gamma$ acts on the set $\mathcal E_1$ via automorphisms, that is, if $\gamma\in\Gamma$ and $g\in\mathcal{M}$, then $\gamma\cdot E_g = E_{\gamma(g)}$. We clearly have the following corollary.
\end{nothing}
\begin{corollary}
    The homomorphism of $\mathbb C[\Gamma]$-modules
    \[
    \mathbb C[\mathcal{E}_1] \to \mathcal{K}\kerlin(G), \, E_g\mapsto E_g
    \]
    is surjective. Its kernel is spanned by the orbit sums of the $Z$-action on $\mathcal M$, that is, by the sums
    \[
    E^+_{gZ} := \sum_{i=0}^{p-1}E_{gz^i}.
    \]
    as $g$ runs over a set $[\mathcal{M}/Z]$ of representatives of $Z$-orbits in $\mathcal M$.
\end{corollary}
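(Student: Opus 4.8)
The plan is to reduce everything to the two structural facts proved in Section~\ref{sec:restkerGen}: that $\mathcal K\kerlin(G)=\operatorname{span}_{\mathbb C}(\mathcal E_1)$, and that $\mathcal K\kerlin(G)=\bigoplus_{gZ\in[\mathcal M/Z]}\mathcal K_{gZ}$, where $\mathcal K_{gZ}$ is the span of $\{E_{gz^i}\mid 0\le i\le p-1\}$ and has dimension $p-1$. First I would record that the stated map $\pi\colon\mathbb C[\mathcal E_1]\to\mathcal K\kerlin(G)$, $E_g\mapsto E_g$, is a morphism of $\mathbb C[\Gamma]$-modules: on $\mathbb C[\mathcal E_1]$ the group $\Gamma$ permutes the basis by $\gamma\cdot E_g=E_{\gamma(g)}$, whereas on the codomain $\gamma\cdot E_g=\iso_G^\gamma(\varphi_1^G\cdot e_{G,g}^G)=\varphi_1^G\cdot\iso_G^\gamma(e_{G,g}^G)=\varphi_1^G\cdot e_{G,\gamma(g)}^G=E_{\gamma(g)}$, using Theorem~\ref{thm:bisetactions}(f) together with the fact that $\varphi_1^G\in\mathbb C B(G\times G)$ is invariant under conjugation by $\iso_G^\gamma$ since it is built only from the subgroup lattice of $G$. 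Surjectivity of $\pi$ is then immediate from $\mathcal K\kerlin(G)=\operatorname{span}_{\mathbb C}(\mathcal E_1)$.

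For the kernel, the first step is the relation $E^+_{gZ}=\sum_{i=0}^{p-1}E_{gz^i}=0$ in $\mathcal K\kerlin(G)$, i.e.\ $E^+_{gZ}\in\ker\pi$ for every $g\in\mathcal M$. Substituting $E_{gz^i}=e_{G,gz^i}^G-\frac{1}{p}\sum_{j=0}^{p-1}e_{G,gz^{i+j}}^G$ from Equation~\eqref{eqn:phieee} and summing over $i$, the single terms contribute $\sum_{u\in Z}e_{G,gu}^G$ while the double sum contributes $p\sum_{u\in Z}e_{G,gu}^G$, so the two contributions cancel. Observe that $E^+_{gZ}$ depends only on the $Z$-orbit $gZ$, and that orbit sums attached to distinct $Z$-orbits have disjoint supports in the basis $\mathcal E_1$, hence are linearly independent in $\mathbb C[\mathcal E_1]$.

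To see that these orbit sums span all of $\ker\pi$ I would argue blockwise. Since $Z$ is characteristic in $G$ (it is the unique subgroup of order $p$ of the cyclic group $\Phi(G)$), the $Z$-orbits on $\mathcal M$ are permuted by $\Gamma$, so $\mathbb C[\mathcal E_1]=\bigoplus_{gZ\in[\mathcal M/Z]}\mathbb C[\{E_{gz^i}\mid 0\le i\le p-1\}]$, and $\pi$ maps the $p$-dimensional $gZ$-block onto $\mathcal K_{gZ}$. As $\dim\mathcal K_{gZ}=p-1$, the kernel of $\pi$ on each block is $1$-dimensional, and since $\mathcal K\kerlin(G)=\bigoplus_{gZ}\mathcal K_{gZ}$ is a direct sum one gets $\ker\pi=\bigoplus_{gZ}\bigl(\ker\pi\cap\mathbb C[\{E_{gz^i}\mid i\}]\bigr)$; as $E^+_{gZ}$ is a nonzero vector in the $1$-dimensional $gZ$-block kernel, this is $\bigoplus_{gZ}\mathbb C\,E^+_{gZ}$, proving the claim. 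As a numerical check, $\dim\ker\pi=|\mathcal M|-\dim\mathcal K\kerlin(G)=p^m(p-1)-p^{m-1}(p-1)^2=p^{m-1}(p-1)=|\mathcal M/Z|$, which matches the number of orbit sums. The only point needing care is the $\Gamma$-equivariance of $\pi$, and even that reduces to the routine fact that $\varphi_1^G$ is fixed by every automorphism of $G$, so I do not anticipate a genuine obstacle here.
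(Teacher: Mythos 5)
Your proposal is correct and follows the route the paper evidently has in mind: the paper states this result immediately after establishing $\mathcal K\kerlin(G)=\operatorname{span}_{\mathbb C}(\mathcal E_1)$, the block decomposition $\mathcal K\kerlin(G)=\bigoplus_{gZ}\mathcal K_{gZ}$, and $\dim\mathcal K_{gZ}=p-1$, and then simply says ``We clearly have the following corollary'' with no written proof. Your argument supplies exactly the details being taken as obvious: surjectivity from the span statement, the explicit cancellation $\sum_{i=0}^{p-1}E_{gz^i}=0$ via Equation~\eqref{eqn:phieee}, and the blockwise dimension count showing the $E^+_{gZ}$ exhaust the kernel (which works because $\pi$ sends the $gZ$-block of $\mathbb C[\mathcal E_1]$ into $\mathcal K_{gZ}$ and the target is a direct sum of these). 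The short remark on $\Gamma$-equivariance (that $\varphi_1^G$ commutes with $\iso_G^\gamma$ because it is built from the subgroup poset of $G$) is also correct and is something the paper glosses over. No genuine obstacle.
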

\begin{nothing}
     The $Z$-orbits in $\mathcal{E}_1$ are permuted by the $\Gamma$-action since the subgroup $Z$ is a characteristic subgroup. In particular the $\Gamma$-action on $\mathcal{E}_1$ induces an action on the set $\mathcal{E}^+:=\{ E_{gZ}^+\mid g \in [\mathcal{M}/Z] \}$ and hence the above kernel is also a permutation $\mathbb C[\Gamma]$-module. Hence there is an isomorphism
    \[
    \mathcal{K}\kerlin(G) \cong \mathbb C[\mathcal{E}_1]/ \mathbb C[\mathcal{E}^+]
    \]
    of $\mathbb C[\Gamma]$-modules. We can make these permutation modules more precise.
\end{nothing}
\begin{lemma}
  \textrm{\textbf{(a)}}  The $\Gamma$-set $\mathcal{E}_1$ is transitive and $\Sigma:=\text{Stab}_\Gamma(E_x)$ is the subgroup of $\Gamma$ generated by the elements $b$ and $d$.

  \smallskip

  \textbf{(b)} The $\Gamma$-set $\mathcal{E}^+$ is transitive and $T:=\text{Stab}_\Gamma(E_x^+)$ is the subgroup $\Sigma\times \langle a^\omega\rangle$ of $\Gamma$.
\end{lemma}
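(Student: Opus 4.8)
The plan is to do everything on the $\Gamma$-set $\mathcal M$ of elements of $G$ of order $p^m$, transported to $\mathcal E_1$ through the $\Gamma$-equivariant bijection $g\mapsto E_g$ recorded just above; writing $G=\langle x\rangle\times\langle y\rangle$ with $|x|=p^m$, $|y|=p$, I will first note the elementary description $\mathcal M=\{x^iy^j : p\nmid i,\ 0\le j<p\}$ (if $p\mid i$ then $x^iy^j$ has order at most $p^{m-1}$, using $m\ge 2$), so $|\mathcal M|=p^m(p-1)$, together with $\Omega_1(G):=\{g\in G:g^p=1\}=\langle z\rangle\times\langle y\rangle\cong C_p\times C_p$, where $Z=\langle z\rangle=\langle x^{p^{m-1}}\rangle$ is the unique subgroup of order $p$ of the cyclic Frattini subgroup $\Phi(G)\cong C_{p^{m-1}}$.

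For part (a) I would get transitivity for free: given $g=x^iy^j\in\mathcal M$, the assignment $x\mapsto g$, $y\mapsto y$ extends to an endomorphism of $G$ whose image contains $gy^{-j}=x^i$, hence (as $p\nmid i$) all of $G$, so it is an automorphism sending $x$ to $g$; thus $\Gamma\cdot x=\mathcal M$. For the stabiliser, $\gamma\in\Sigma$ iff $\gamma(x)=x$ (the assignment $g\mapsto E_g$ being injective on $\mathcal M$, as the coefficient of $e^G_{G,g}$ already distinguishes $E_g$), and such a $\gamma$ is determined by $\gamma(y)$, which may be any element of order $p$ lying outside $\langle x\rangle$ and conversely, since $x$ together with any such element generates $G$. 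Counting these inside $\Omega_1(G)$ gives $(p^2-1)-(p-1)=p(p-1)$ of them, so $|\Sigma|=p(p-1)$. From the presentation of $\Gamma$ in the Appendix, the generators $b$ and $d$ fix $x$ and $\langle b,d\rangle$ has order $p(p-1)$ (they commute, of coprime orders $p-1$ and $p$); hence $\Sigma=\langle b,d\rangle$.

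For part (b) I would first observe that $Z$, being the unique subgroup of order $p$ of the cyclic group $\Phi(G)$, is characteristic in $G$, so $\Gamma$ permutes the $Z$-orbits in $\mathcal M$ and acts on $\mathcal E^+$ via $\gamma\cdot E^+_{gZ}=E^+_{\gamma(g)Z}$; transitivity then passes down from $\mathcal E_1$. For the stabiliser, $\gamma\in T$ iff $\gamma(x)Z=xZ$, i.e. $\gamma(x)\in xZ$, so $\Sigma\subseteq T$. From the Appendix, $a^\omega$ is the order-$p$ element of the scalar subgroup of $\Gamma$, so $a^\omega(x)\in xZ\setminus\{x\}$ and $a^\omega$ centralises $\langle b,d\rangle=\Sigma$; since $\langle a^\omega\rangle\cap\Sigma=1$, the product $\Sigma\langle a^\omega\rangle=\Sigma\times\langle a^\omega\rangle$ has order $p^2(p-1)$. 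On the other hand a $\gamma\in T$ is pinned down by $\gamma(x)\in xZ$ ($p$ choices, each a generator of $\langle x\rangle$) together with $\gamma(y)$ (one of the $p(p-1)$ elements of order $p$ outside $\langle x\rangle$), so $|T|=p^2(p-1)$; hence $T=\Sigma\times\langle a^\omega\rangle$.

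The only part requiring care will be the dictionary with the Appendix — reading off that $b$ and $d$ fix $x$ and generate a subgroup of order $p(p-1)$, and that $a^\omega$ has order $p$ with $a^\omega(x)\in xZ\setminus\{x\}$ and centralises $\langle b,d\rangle$ — together with the check that $\Phi(G)$ is cyclic so that $Z$ is genuinely characteristic and the $\Gamma$-action on $\mathcal E^+$ is well defined. I expect this bookkeeping with the Appendix's presentation of $\Aut(C_{p^m}\times C_p)$, rather than any conceptual point, to be the main obstacle; the remaining content is the two order counts and the transfer to $\mathcal M$.
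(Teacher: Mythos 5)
Your proof is correct in substance and follows the same route as the paper, namely reading the stabiliser orders off the Appendix presentation of $\Gamma$ and checking that $a^\omega$ preserves the $Z$-orbit of $x$. The one slip is the parenthetical claim that $b$ and $d$ commute: the presentation records $b^d=b^s$, and since $s$ is a primitive root modulo $p^m$ its residue modulo $p$ is $\ne 1$, so conjugation by $d$ acts nontrivially on $\langle b\rangle\cong C_p$. The order count $|\langle b,d\rangle|=p(p-1)$ nevertheless stands, because $\langle d\rangle$ normalises $\langle b\rangle$ and the two cyclic factors have coprime orders and trivial intersection, giving $\langle b,d\rangle=\langle b\rangle\rtimes\langle d\rangle$. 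Everything else --- the surjectivity argument for transitivity, the counts $|\Sigma|=p(p-1)$ and $|T|=p^2(p-1)$, the observation that $a^\omega$ is central of order $p$ with $a^\omega(x)=xz$, and the conclusion $T=\Sigma\times\langle a^\omega\rangle$ --- is accurate and amounts to the detail the paper's one-line proof leaves to the reader.
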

\begin{proof}
    This lemma follows easily from the characterization of the generators of $\Gamma$ given in \ref{appB}. We only note that in Part (b), the element $a^\omega$ fixes $E_{xZ}^+$ because we have
    \[
    a^\omega(xz) = x^{s^\omega}z^{s^{\omega}} = x^{1+u}z^{1+u} = xz^{2}.
    \]
    \end{proof}
\begin{nothing}
By the lemma, we need to evaluate the quotient of two permutation $\mathbb C[\Gamma]$-modules. We write 
    \[
    \mathbb C[\mathcal{E}_1] \cong \ind_\Sigma^\Gamma \mathbb C = \ind_T^\Gamma \ind_\Sigma^T \mathbb C
    \quad
    \text{ and }
    \quad
    \mathbb C[\mathcal{E}^+] \cong \ind_T^\Gamma \mathbb C.
    \]
    Moreover since $\Sigma\unlhd T$ and $T/\Sigma\cong \langle a^\omega\rangle$, we have 
    \[
    \ind_\Sigma^T \mathbb C \cong \mathbb C[T/\Sigma] \cong \bigoplus_{V\in\text{Irr}(T/\Sigma)} \infl_{T/\Sigma}^T V.
    \]
    Substituting this in the above isomorphism,
    we obtain
    \[
    \mathbb C[\mathcal{E}_1] \cong \ind_T^\Gamma \mathbb C[T/\Sigma] \cong \bigoplus_{V\in\text{Irr}(T/\Sigma)} \ind_T^\Gamma\infl_{T/\Sigma}^T V.
    \]
    The term corresponding to the kernel $\mathbb C[\mathcal{E}^+]$ is the one parameterized by the trivial $\mathbb C[T/\Sigma]$-module $V=\mathbb C$.
    Hence we get
    \[
    \mathcal{K}\kerlin(G) \cong \bigoplus_{V\in\text{Irr}(T/\Sigma): V\not\cong \mathbb C} \ind_T^\Gamma\infl_{T/\Sigma}^T V
    \] 
    One still needs to decompose the induced modules 
    $\ind_T^\Gamma\infl_{T/\Sigma}^T V$. We denote the normalizer of $T$ in $\Gamma$ by $N$. It is clear that $a\in N$ and $c\not\in N$. Hence we must have 
    $N=\langle a, b, d\rangle$. We also have that $|\Gamma: N| = p$ and that $N$ is not normal in $\Gamma$. In particular, $N$ is self-normalizing. Moreover for any $V\in\text{Irr}(T/\Sigma)$, the irreducible $\mathbb C[T]$-module $\infl_{T/\Sigma}^T V$ is $N$-stable. In particular, $V$ extends to $N$ and $\ind_T^N V$ is the direct sum of distinct extensions of $V$. Writing $\text{Irr}(N|V)$ for the set of extensions of $V$, we obtain
    \[
    \mathcal{K}\kerlin(G) \cong \bigoplus_{V\in\text{Irr}(T/\Sigma): V\not\cong \mathbb C} \bigoplus_{W\in \text{Irr}(N|V) }\ind_N^\Gamma W
    \] 
    Finally we have the following result.
    \begin{lemma}\label{lem:irredps}
    With the above notation, the $\mathbb C[\Gamma]$-modules $\ind_N^\Gamma W$ are all irreducible of dimension $p$. 
    \end{lemma}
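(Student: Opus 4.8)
The plan is to use Mackey's irreducibility criterion, after recording the dimension. Since $T/\Sigma\cong\langle a^{\omega}\rangle$ is cyclic, $V$ is a linear character; as $\infl_{T/\Sigma}^{T}V$ extends to $N$ (noted above), each $W\in\mathrm{Irr}(N\mid V)$ is itself a linear character of $N$, so $\dim_{\mathbb C}\ind_{N}^{\Gamma}W=[\Gamma:N]\cdot\dim W=p$. It remains to prove that $\ind_{N}^{\Gamma}W$ is irreducible.

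By Frobenius reciprocity together with the Mackey formula,
\[
\langle\ind_{N}^{\Gamma}W,\ \ind_{N}^{\Gamma}W\rangle_{\Gamma}=\sum_{g\in[N\backslash\Gamma/N]}\big\langle\res_{N\cap{}^{g}N}W,\ \res_{N\cap{}^{g}N}{}^{g}W\big\rangle_{N\cap{}^{g}N},
\]
and the $g=1$ term equals $1$. Since $W$ is linear, every remaining summand is $0$ or $1$, being $1$ exactly when the two linear characters $\res_{N\cap{}^{g}N}W$ and $\res_{N\cap{}^{g}N}{}^{g}W$ coincide. Thus irreducibility is equivalent to the statement that for every $g\in\Gamma\setminus N$ these two characters of $N\cap{}^{g}N$ are distinct. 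Because $[\Gamma:N]=p$ and $c\notin N$, one may take the nontrivial double-coset representatives from $\langle c\rangle$, reducing the check to finitely many explicit elements $g$.

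For such a $g$ I would compute $N\cap{}^{g}N$ and the two characters on it directly from the presentation of $\Gamma=\Aut(C_{p^m}\times C_p)$ given in the Appendix. The key point is that $W$ is trivial on $\Sigma=\langle b,d\rangle$ whereas $\res_{\langle a^{\omega}\rangle}W=V$ is nontrivial; one then locates an element $h\in N\cap{}^{g}N$, built from $b$, $d$, or a word in them, whose $g$-conjugate acquires a power of $a$ that $V$ does not kill. Then $W(h)=1$ while $({}^{g}W)(h)=W(g^{-1}hg)\neq 1$, so the two restricted characters differ. This is the same mechanism as in the case $p=2$, where the relation $b^{c}=\tilde a_{2}b$ forced ${}^{c}\phi_{x}\neq\phi_{x}$. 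Once every nontrivial summand above is shown to vanish, $\langle\ind_{N}^{\Gamma}W,\ind_{N}^{\Gamma}W\rangle_{\Gamma}=1$, whence $\ind_{N}^{\Gamma}W$ is irreducible of dimension $p$, as claimed.

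The main obstacle is this final step: pinning down $N\cap{}^{g}N$ and the conjugation action of $g$ on it requires careful bookkeeping with the Appendix's presentation of $\Gamma$, and the distinguishing element $h$ — together with the exact power of $a$ it picks up — must be produced case by case over the double-coset representatives.
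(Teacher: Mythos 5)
Your plan---Mackey's irreducibility criterion---is a legitimate alternative route, and the reduction is set up correctly: since $W$ is linear and $[\Gamma:N]=p$, the problem becomes showing $\res_{N\cap{}^{g}N}W \neq \res_{N\cap{}^{g}N}{}^{g}W$ for every double-coset representative $g\in\Gamma\setminus N$. But this is precisely where the proposal stops. You acknowledge that producing the distinguishing element $h\in N\cap{}^{g}N$ requires case-by-case work with the presentation of $\Gamma$ that you have not carried out, and the sentence ``one then locates an element $h$ \dots whose $g$-conjugate acquires a power of $a$ that $V$ does not kill'' is a plan, not an argument. One cautionary remark: the natural first candidate $h=a^{\omega}$ will \emph{not} work, because $a^{\omega}$ is central in $\Gamma$, so $({}^{g}W)(a^{\omega})=W(g^{-1}a^{\omega}g)=W(a^{\omega})$ for all $g$. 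Your instinct that $h$ must be built from $b$ and $d$ is right, but the proof as written has a genuine gap at the decisive step.

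The paper avoids this double-coset computation entirely by turning the centrality of $a^{\omega}$ into the weapon rather than the obstacle. Since $a^{\omega}\in Z(\Gamma)$ and $\res^{N}_{T}\chi_{W}=\infl_{T/\Sigma}^{T}\chi_{V}$ restricts on $\langle a^{\omega}\rangle$ to the nontrivial character $V$, one gets
\[
\ind_{N}^{\Gamma}\chi_{W}(a^{\omega}) = [\Gamma:N]\,\chi_{W}(a^{\omega}) = p\,\zeta_{p}
\]
with $\zeta_{p}\neq 1$ a $p$-th root of unity. Because $a^{\omega}$ lies in the commutator subgroup $\Gamma'$, every linear character of $\Gamma$ is $1$ on $a^{\omega}$, and this value forces the degree-$p$ induced character to be irreducible. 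This is a one-line check at a single central element, whereas your route requires pinning down $N\cap{}^{g}N$ and the conjugation action for each nontrivial double coset. If you want to complete your approach, you would in effect have to rediscover the relation $cb=a^{-\omega}bc$ and verify by hand that conjugation by powers of $c$ twists the value of $W$ on some word in $b,d$ by a nontrivial power of $\zeta_{p}$; the paper's argument packages that information once and for all through the central element.
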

\begin{proof}
We determine the character of $\ind_N^\Gamma W$.
Suppose $V$ is the corresponding $\mathbb{C}[T/\Sigma]$-module and write $\chi_V$ for its 
character. Also write $\chi_W$ for the character of $W$. Since $W$ is an extension of 
$\infl_{T/\Sigma}^T V$ to $N$, in particular $W$ is a linear character of $N$.

We show $\langle \ind_N^\Gamma \chi_W, \ind_N^\Gamma \chi_W \rangle_\Gamma = 1$, which implies 
irreducibility. By Mackey's theorem, writing $N^{(s)} = N \cap s^{-1}Ns$, we have
\[
\langle \ind_N^\Gamma \chi_W, \ind_N^\Gamma \chi_W \rangle_\Gamma 
= \sum_{NsN \in N\backslash\Gamma/N} 
\langle \res^N_{N^{(s)}} \chi_W,\, \res^N_{N^{(s)}} {}^s\!\chi_W \rangle_{N^{(s)}}.
\]
The term $s = 1$ contributes $\langle \chi_W, \chi_W \rangle_N = 1$ since $W$ is irreducible.
For any $s \notin N$, write $s = c^k n'$ for some $k \in \{1, \ldots, p-1\}$ and $n' \in N$, 
and set $e = n'^{-1}bn' \in N$. From the relation $cb = a^{-\omega}bc$ in $\Gamma$ 
and the fact that $a^\omega \in Z(\Gamma)$, an easy induction gives $c^{-k}bc^k = a^{k\omega}b$,
and therefore
\[
s^{-1}es = n'^{-1}c^{-k}bc^k n' = n'^{-1}a^{k\omega}bn' = a^{k\omega}(n'^{-1}bn') = a^{k\omega}e \in N,
\]
so $e \in N^{(s)}$. Since $W$ is a linear character of $N$, conjugation does not change its 
value, so $\chi_W(e) = \chi_W(b) = 1$, where the last equality holds because 
$b \in \Sigma$ and $\chi_W|_T = \infl_{T/\Sigma}^T V$ is trivial on 
$\Sigma$. On the other hand,
\[
{}^s\!\chi_W(e) = \chi_W(s^{-1}es) = \chi_W(a^{k\omega}e) 
= \chi_W(a^\omega)^k \cdot \chi_W(e) = \zeta_p^k,
\]
where $\zeta_p = \chi_V(a^\omega\Sigma) \neq 1$ is a primitive $p$-th root of unity. Since 
$\zeta_p^k \neq 1$ for $k \in \{1,\ldots,p-1\}$, the linear characters $\chi_W$ and 
${}^s\!\chi_W$ disagree on $N^{(s)}$, and hence 
$\langle \res^N_{N^{(s)}} \chi_W, \res^N_{N^{(s)}} {}^s\!\chi_W \rangle_{N^{(s)}} = 0$.

Therefore $\langle \ind_N^\Gamma \chi_W, \ind_N^\Gamma \chi_W \rangle_\Gamma = 1$, 
and $\ind_N^\Gamma W$ is irreducible.
\end{proof}
    \end{nothing}

\begin{theorem}[Structure of \(\mathcal K\kerlin(G)\) for \(C_{p^m} \times C_p\)]
Let \(G \cong C_{p^m} \times C_p\) with \(m \geq 2\). Then
\[
\mathcal{K}\kerlin(G) \cong \bigoplus_{V\in\text{Irr}(T/\Sigma): V\not\cong \mathbb C} \bigoplus_{W\in \text{Irr}(N|V) }\ind_N^\Gamma W
\]
where each $\ind_N^\Gamma W$ is irreducible.  
\end{theorem}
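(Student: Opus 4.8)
The plan is to assemble the theorem from the chain of reductions already carried out in this section, so that no new computation is required. First I would recall from the corollary above that there is a surjection of $\mathbb{C}[\Gamma]$-modules $\mathbb{C}[\mathcal{E}_1]\twoheadrightarrow\mathcal{K}\kerlin(G)$ whose kernel is the permutation submodule $\mathbb{C}[\mathcal{E}^+]$ spanned by the $Z$-orbit sums $E^+_{gZ}$, so that $\mathcal{K}\kerlin(G)\cong\mathbb{C}[\mathcal{E}_1]/\mathbb{C}[\mathcal{E}^+]$. Using the transitivity statements of the preceding lemma, I would rewrite $\mathbb{C}[\mathcal{E}_1]\cong\ind_T^\Gamma\ind_\Sigma^T\mathbb{C}$ and $\mathbb{C}[\mathcal{E}^+]\cong\ind_T^\Gamma\mathbb{C}$, where $\Sigma=\langle b,d\rangle$ and $T=\Sigma\times\langle a^\omega\rangle$ are the stabilizers identified there.

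Since $\Sigma\unlhd T$ with cyclic quotient $T/\Sigma\cong\langle a^\omega\rangle\cong C_p$, the next step is to decompose $\ind_\Sigma^T\mathbb{C}\cong\mathbb{C}[T/\Sigma]\cong\bigoplus_{V\in\mathrm{Irr}(T/\Sigma)}\infl_{T/\Sigma}^T V$, apply $\ind_T^\Gamma$ termwise, and identify the summand indexed by the trivial module $V=\mathbb{C}$ with the kernel $\mathbb{C}[\mathcal{E}^+]$; quotienting by it yields
\[
\mathcal{K}\kerlin(G)\cong\bigoplus_{V\in\mathrm{Irr}(T/\Sigma):\,V\not\cong\mathbb{C}}\ind_T^\Gamma\infl_{T/\Sigma}^T V.
\]
Then I would invoke Clifford theory relative to the normalizer $N=N_\Gamma(T)=\langle a,b,d\rangle$, which has index $p$ in $\Gamma$ and is self-normalizing: since each $\infl_{T/\Sigma}^T V$ is $N$-stable and $T/\Sigma$ is cyclic, $V$ extends to $N$, so $\ind_T^N\infl_{T/\Sigma}^T V$ is the sum of its distinct extensions, indexed by $\mathrm{Irr}(N\mid V)$; inducing further to $\Gamma$ and applying Clifford's correspondence gives $\ind_T^\Gamma\infl_{T/\Sigma}^T V\cong\bigoplus_{W\in\mathrm{Irr}(N\mid V)}\ind_N^\Gamma W$. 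Finally, Lemma~\ref{lem:irredps} shows that each $\ind_N^\Gamma W$ is irreducible of dimension $p$, which completes the argument.

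The only genuine inputs are the bookkeeping facts about $\Gamma=\Aut(C_{p^m}\times C_p)$ — transitivity of $\mathcal{E}_1$ and $\mathcal{E}^+$ with the stated stabilizers, $N$-stability of the inflated modules, centrality of $a^\omega$ and its membership in $\Gamma'$ — all of which come from the explicit presentation of $\Gamma$ in the Appendix and have already been established in the preceding lemmas. Consequently the present theorem is a formal synthesis, and I expect the main obstacle to lie not here but in Lemma~\ref{lem:irredps}, whose irreducibility proof rests on the observation that $\ind_N^\Gamma\chi_W(a^\omega)=p\zeta_p$ for a nontrivial $p$-th root of unity $\zeta_p$ together with $a^\omega\in\Gamma'$ excluding the possibility that the induced character is $p$ times a linear one.
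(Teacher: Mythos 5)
Your proposal is correct and follows the paper's own development step for step: the surjection $\mathbb{C}[\mathcal{E}_1]\twoheadrightarrow\mathcal{K}\kerlin(G)$ with kernel $\mathbb{C}[\mathcal{E}^+]$, the identification of stabilizers $\Sigma$ and $T$, the decomposition of $\ind_\Sigma^T\mathbb{C}$ over $\mathrm{Irr}(T/\Sigma)$, the Clifford-theoretic passage through $N = N_\Gamma(T)$, and the final appeal to Lemma~\ref{lem:irredps}. One tiny imprecision worth noting: the extension of $\infl_{T/\Sigma}^T V$ from $T$ to $N$ is guaranteed because $N/T$ is cyclic (standard extension criterion for $N$-invariant irreducibles), not because $T/\Sigma$ is cyclic, though of course the conclusion is unaffected.
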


\begin{nothing}
    Finally we note that not all irreducible $\mathbb C\Gamma$-modules of dimension $p$ appears as a direct summand of $\mathcal{K}\kerlin(G)$. As we will see in the example below, there is an automorphism of $\Gamma$ acting non-trivially on the center and hence producing more irreducibles of degree $p$. 

    With this remark, we have completed the proof of Theorem \ref{thm:main}.
    
\end{nothing}

\appendix

\section{The group $\Aut(C_{2^m}\times C_2)$ and its Irreducibles}\label{app:evenp}

This appendix describes the automorphism group \(\Gamma = \operatorname{Aut}(C_{2^m} \times C_2)\) for $m\ge 3$ and its irreducible representations. These results are used in Section~8 to analyze the \(\mathbb C[\Gamma]\)-module structure of \(\mathcal{K}\kerlint(G)\) for \(G = C_{2^m} \times C_2\).

\begin{nothing} {\textbf{The group.}} 
Fix $m\in\mathbb Z_{\ge 3}$ and write $G = C_{2^m}\times C_2$. 
We give a presentation of the group $\Gamma:= \Aut(G)$ following \cite{BC}.
Write $G = \langle x, y\mid x^{2^m} = y^2 = 1, xy=yx\rangle$ and let $z = x^{2^{m-1}}$ be the unique element of order 2 in $\langle x\rangle$. We may regard the elements of $G$ as column vectors via identifying $x^i\cdot y^j$ by $\begin{pmatrix}
    x^i\\ y^j
\end{pmatrix}$ With this notation, any automorphism of $G$ may be described using a matrix notation as follows: 

Let  $a:= \begin{pmatrix}
    \delta_1& \delta_2\\ \delta_3&\delta_4
\end{pmatrix}$ be a $2\times 2$-matrix where: 
\[\delta_1\in\Aut(\langle x\rangle), \delta_2\in\Hom(\langle y\rangle, \langle x\rangle), \delta_3\in \Hom(\langle x\rangle, \langle y\rangle) \text{ and } \delta_4\in\Aut(\langle y\rangle).\]
Then, for any $(h, k)\in G$, we have
\[
a(h, k) = \begin{pmatrix}
    \delta_1& \delta_2\\ \delta_3&\delta_4
\end{pmatrix}\begin{pmatrix}
    h\\ k
\end{pmatrix} = (\delta_1(h)\delta_2(k), \delta_3(h)\delta_4(k))
\]
which is clearly an endomorphism of $G$. Now define the homomorphisms
\begin{eqnarray*}
    \alpha_1 : \langle x\rangle \to \langle x\rangle, && x\mapsto x^{-1},\\
        \alpha_2 : \langle x\rangle \to \langle x\rangle, && x\mapsto x^{5},\\
    \beta : \langle y\rangle \to \langle x\rangle, && y\mapsto z,\\
    \gamma : \langle x\rangle \to \langle y\rangle, && x\mapsto y.
\end{eqnarray*}
Corresponding to each of these homomorphisms, we construct the following matrices
\[
a_1:= \begin{pmatrix}
    \alpha_1 & 0\\ 0& 1
\end{pmatrix},\ 
a_2:= \begin{pmatrix}
    \alpha_2& 0\\ 0&1
\end{pmatrix},\ 
b:= \begin{pmatrix}
    1& \beta\\ 0&1
\end{pmatrix},\ 
c:= \begin{pmatrix}
    1& 0\\ \gamma &1
\end{pmatrix}
\]
which clearly induce automorphisms of $G$. 
For future use, we also list the evaluation of each of the above automorphisms on the generators $x$ and $y$ of $G$.
\begin{center}
    \begin{tabular}{c|cccc}
         & $a_1$ & $a_2$ & $b$ & $c$ \\ \hline
         $x$ & $x^{-1}$ & $x^5$ & $x$ & $xy$\\
         $y$ & $y$ & $y$ & $zy$ & $y$\\
    \end{tabular}
\end{center}
By \cite[Section 3]{BC}, the automorphism group \(\Gamma\) is generated by the elements \(a_1, a_2, b, c\), with the following relations:
\begin{align*}
\Gamma = \langle a_1, a_2, b, c \mid a_1^2 = a_2^{2^{m-2}} = b^2 = c^2 = 1, \, b^c = a_2^{2^{m-3}} b,\\ \text{others commute} \rangle.
\end{align*}
Some properties of \(\Gamma\) include:
\begin{itemize}
    \item \(\Gamma\) has order \(2^{m+1} = |G|\).
    \item The center \(Z(\Gamma)\) is generated by \(a_1, a_2\).
    \item The commutator subgroup \(\Gamma'\) is the cyclic subgroup generated by \(a_2^{2^{m-3}}\).
    \item There are isomorphisms \begin{eqnarray*}
\Gamma 
&\cong& \big( (\langle a_2\rangle \times \langle b\rangle)\rtimes \langle c\rangle \big) \times \langle a_1\rangle \\
&\cong & \big( (C_{2^{m-2}}\times C_2)\rtimes C_2 \big) \times C_2 
\end{eqnarray*}
\end{itemize}

\end{nothing}
\begin{nothing} {\textbf{The characters.}}
Next we determine the degrees of irreducible characters of $\Gamma$. By the above given relations and properties, it is clear that 
\[
I:= \langle a_1, a_2, b\rangle \cong C_2\times C_2\times C_{2^{m-2}}.
\]
 is a normal abelian subgroup of index 2 in $\Gamma$. Hence by a well-known result from character theory, every irreducible character of $\Gamma$ has degree 1 or 2. On the other hand, the number of linear characters of $\Gamma$ is equal to $|\Gamma: \Gamma'| = |\Gamma|/2$. Thus
\[
|\Gamma | = \sum_{\chi\in\text{Irr}(\Gamma)}\chi(1)^2 = |\Gamma|/2 + 
\sum_{\chi\in\text{Irr}(\Gamma): \chi(1) = 2}4
\]
Hence the group $\Gamma$ has $|\Gamma|/8$ irreducible characters of degree 2. 

One can say more about the degree 2 irreducible characters of $\Gamma$. Let $\chi\in\text{Irr}(G)$ with $\chi(1)=2$. Also let $\theta\in\text{Irr}(I)$ be an irreducible constituent of the restriction of $\chi$ to $I$. Then by Frobenius reciprocity, we must have
\[
\ind_I^\Gamma\theta = \chi
\]
since $|\Gamma: I| = 2 = \chi(1)$. It is straightforward to show that the character $\theta$ above cannot be $\Gamma$-stable and moreover any irreducible character $\chi\in\text{Irr}(\Gamma)$ of degree 2 is obtained in this way. 
As a result we obtain the following result.
\begin{theorem}\label{thm:chars}
    The group $\Gamma = \Aut(G)$ has 
    \begin{enumerate}
        \item $|\Gamma|/2$ irreducible characters of degree 1 and
        \item $|\Gamma|/8$ irreducible characters of degree 2.
    \end{enumerate}
    Moreover any irreducible character of $\Gamma$ of degree 2 is the induction of some $\Gamma$-non-stable irreducible character of the subgroup $I$ of $\Gamma$.
\end{theorem}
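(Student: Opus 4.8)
The plan is to exploit the abelian normal subgroup $I = \langle a_1, a_2, b\rangle \cong C_2\times C_2\times C_{2^{m-2}}$ of index $2$ in $\Gamma$ together with the sum-of-squares identity. First I would invoke Ito's theorem: since $I\unlhd\Gamma$ is abelian, every $\chi\in\text{Irr}(\Gamma)$ satisfies $\chi(1)\mid |\Gamma:I| = 2$, so $\chi(1)\in\{1,2\}$. Next, using that $\Gamma' = \langle a_2^{2^{m-3}}\rangle$ has order $2$ (already recorded above), the number of linear characters of $\Gamma$ equals $|\Gamma/\Gamma'| = |\Gamma|/2$.

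Then, separating the linear characters from the degree-$2$ characters in $|\Gamma| = \sum_{\chi\in\text{Irr}(\Gamma)}\chi(1)^2$ and writing $n$ for the number of the latter, I obtain $|\Gamma| = |\Gamma|/2 + 4n$, hence $n = |\Gamma|/8$; this proves parts (1) and (2).

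For the final assertion, let $\chi\in\text{Irr}(\Gamma)$ with $\chi(1) = 2$ and choose an irreducible constituent $\theta$ of $\res^\Gamma_I\chi$; since $I$ is abelian, $\theta(1) = 1$. By Frobenius reciprocity $\langle\chi,\ind_I^\Gamma\theta\rangle\neq 0$, and $\deg(\ind_I^\Gamma\theta) = |\Gamma:I|\cdot\theta(1) = 2 = \chi(1)$, so necessarily $\chi = \ind_I^\Gamma\theta$. To see that $\theta$ cannot be $\Gamma$-stable, observe that if it were, then since $\Gamma/I$ is cyclic the character $\theta$ would extend to a linear character of $\Gamma$, and Gallagher's theorem would express $\ind_I^\Gamma\theta$ as a sum of two linear characters, contradicting its irreducibility. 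Hence each degree-$2$ irreducible of $\Gamma$ is induced from a non-$\Gamma$-stable linear character of $I$.

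The argument is largely routine given the structural facts about $\Gamma$ (its order, commutator subgroup, and the abelian normal subgroup $I$) established above; the only step requiring care is the extension claim — that a $\Gamma$-invariant character of $I$ extends to $\Gamma$ because $\Gamma/I$ is cyclic of order $2$ — which is precisely what forces the constituent $\theta$ to be non-stable and identifies $\chi$ as an induced character.
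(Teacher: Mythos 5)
Your proof is correct and follows essentially the same route as the paper: Ito's theorem (which the paper cites only as ``a well-known result from character theory'') gives $\chi(1)\in\{1,2\}$, the identity $|\Gamma/\Gamma'| = |\Gamma|/2$ counts the linear characters, the sum-of-squares formula yields $|\Gamma|/8$ degree-$2$ characters, and Frobenius reciprocity plus a degree comparison identifies each degree-$2$ character as induced from $I$. The one place you add value is the extension/Gallagher argument for why the constituent $\theta$ cannot be $\Gamma$-stable: the paper dismisses this step as ``straightforward to show,'' and your argument (invariance plus $\Gamma/I$ cyclic implies $\theta$ extends, so $\ind_I^\Gamma\theta$ would split into two linear characters) supplies a clean justification.
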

\end{nothing}

\section{The group $\Aut(C_{p^m}\times C_p)$}\label{appB}
Fix an odd prime $p$, a positive integer $m\ge 2$ and write $G =\langle x\rangle \times \langle y\rangle$ where $x$ is of order $p^m$ and $y$ is of order $p$. 
As in the above case, the group $\Gamma= \Aut(G)$ is generated by four elements, written $a, b, c$ and $d$. We list the evaluation of each of these automorphisms on the generators $x$ and $y$ of $G$. Let $s$ be a primitive $p^m$-th root of unity and put $u = p^{m-1}$. Then
\begin{center}
    \begin{tabular}{c|cccc}
         & $a$ & $b$ & $c$ & $d$ \\ \hline
         $x$ & $x^{s}$ & $x$ & $xy$ & $x$\\
         $y$ & $y$ & $x^uy$ & $y$ & $y^s$\\
    \end{tabular}
\end{center}
To describe the relations between the generators, we introduce two more numbers. Let $t$ be the multiplicative inverse of $s$ in the group $\mathbb Z/p^m\mathbb Z$. Also we choose $\omega$ such that $s^\omega \equiv 1 + u$ (mod $p^m$). With this notation, (also choosing $n=1$), \cite[Theorem 2.5]{BC} gives the following presentation of $\Gamma$.
\begin{eqnarray*}
    \Gamma = &\langle a, b, c, d\mid a^{\Phi(p^m)} = b^p = c^p = d^{p-1} =1, b^a = b^t, b^d =b^s, \\ & c^a=c^s, c^d = c^t, a^d = a, cb = a^{-\omega}bc \rangle. 
    \end{eqnarray*}
    Some properties of \(\Gamma\) include:
\begin{itemize}
    \item \(|\Gamma| = (p-1)^2 p^{m+1}\).
    \item \(\Gamma\) has a cyclic center generated by \(ad\) and $a^\omega\in Z(\Gamma)$.
    \item The commutator subgroup is generated by \(b\) and \(c\) and it intersects $\langle a\rangle\langle d\rangle$ at the cyclic subgroup generated by $a^\omega$.
\end{itemize}

\section{Examples}

In this section we evaluate composition factors for small primes. These examples illustrate some theoretical results in the paper and give an insight into the structure of the restriction kernel.

\begin{nothing}{\bf Examples for \(p = 2\).} 
The table below summarizes the composition factors for some small minimal groups \(G\).
\end{nothing}

\begin{table}[h!]
\centering
\begin{tabular}{|c|c|}
\hline
\textbf{Minimal Group} & \textbf{Composition Factors} \\
\hline
\(C_2\) & \(S_{C_2, 1}\) \\
\(C_2 \times C_2\) & \(S_{C_2 \times C_2, 1}\) \\
\(C_4\) & \(S_{C_4, \phi_1}, S_{C_4, \phi_2}\) \\
$C_8$ & $S_{C_8, \phi_0}, S_{C_8, \phi_1}, S_{C_8, \phi_2}, S_{C_8, \phi_3}$ \\
$C_4\times C_2$ & $S_{C_4\times C_2, V}$\\
$C_8\times C_2$ & $S_{C_8\times C_2, \chi_1}, S_{C_8\times C_2, \chi_2}$\\
\hline
\end{tabular}
\end{table}

    Details are as follows. We use both Theorem \ref{thm:main} and the proof of the relevant part when necessary.
    \begin{itemize}
        \item If $G=C_2$, then $\kerlin(G) \cong \mathbb C$, so the only composition factor with minimal group $C_2$ is $S_{C_2, 1}$.
        \item If $G=C_2\times C_2$, then $S_{C_2\times C_2, 1}$ is the only composition factor with this minimal group. Note that this is the one isomorphic to the functor $\mathbb CD$ of the Dade group.
        \item If $G=C_4$, then $\Aut(C_4)\cong C_2$. Write $\widehat{\Aut(G)} = \{\phi_1, \phi_2\}$. Then both $S_{C_4, \phi_1}$ and $S_{C_4, \phi_2}$ are composition factors with multiplicity 1.
        \item If $G=C_8=\langle g\rangle$, then $\Aut(G)\cong C_2\times C_2$. Write $\Aut(G) = \{ 1, \alpha, \beta, \alpha\beta \}$ where $\alpha(g) = g^3$ and $\beta(g) = g^5$. 
        Also construct the character table of $\widehat{\Aut(G)}$:
        $$
\begin{array}{c|rrrr}
  C_2\times C_2&\rm1&\alpha&\beta&\alpha\beta\cr
\hline
  \phi_{0}&1&1&1&1\cr
  \phi_{1}&1&1&-1&-1\cr
  \phi_{2}&1&-1&1&-1\cr
  \phi_{3}&1&-1&-1&1\cr
\end{array}
$$
Then by Theorem \ref{thm:main}, for each $\phi_i, i=0, 1, 2, 3$, the simple functor $S_{C_8, \phi_i}$ appear as a composition factor with multiplicity 1. Note that the character corresponding to the pair $(2, \phi)$ where $\phi$ is the non-trivial irreducible character of $\mathbb C[\Aut(C_4)]$ is $\phi_2$.

\item The case $G = C_4\times C_2$ is already covered in Theorem \ref{thm:main}(d). 
\item If $G=C_8\times C_2$, then $\Aut(G) = C_2\times D_8$. The subgroup $I =\langle a_1, a_2, b\rangle$ has order 8. The characters lying over $\psi$ are given as follows.
$$
\begin{array}{c|rrrrrrrr}
  (C_2)^3&\rm1&a_1&a_2&b&a_1a_2&a_1b&a_2b&a_1a_2b\cr
\hline
  \phi_{1}&1&1&-1&1&-1&1&-1&-1\cr
  \phi_{2}&1&-1&-1&1&1&-1&-1&1\cr
\end{array}
$$
Then both $\chi_1=\ind_I^{\Aut(G)}\phi_1$ and $\chi_2=\ind_I^{\Aut(G)}\phi_2$ are irreducible, so we obtain the last two composition factors in the above table. Without further details, we include the characters $\chi_1$ and $\chi_2$.
$$
\begin{array}{c|rrrrrrrrrr}
  \rm C_2\times D_8&\rm1&a_1&a_2&a_1a_2&b&c&a_1b&a_1c&bc&a_1bc\cr
\hline
  \chi_{1}&2&2&-2&-2&0&0&0&0&0&0\cr
  \chi_{2}  &2&-2&-2&2&0&0&0&0&0&0\cr
\end{array}
$$
    \end{itemize}

\begin{nothing}{\bf Examples for $p=3$.}
    For the second example assume $p=3$. This case has some simplifications but it is still instructive. We have the following list.
\begin{table}[h!]
\centering
\begin{tabular}{|c|c|}
\hline
    Minimal group & Composition factor \\ \hline
    $C_3$ & $S_{C_3, 1}$ \\
    $C_3 \times C_3$ & $S_{C_3\times C_3, 1}, S_{C_3\times C_3, \chi}$ \\
    $C_9$ & $S_{C_9, \phi_i}, \, i=0\ldots 5$ \\
    $C_9 \times C_3$ & $S_{C_9\times C_3, \chi_{16}}, S_{C_9\times C_3, \chi_{17}},$ \\
    & $S_{C_9\times C_3, \chi_{18}}, S_{C_9\times C_3, \chi_{19}}$ \\
    \hline
\end{tabular}
\end{table}

\begin{itemize}
    \item We leave the details of the cases $G=C_3$ and $G=C_9$ to the reader.
    \item If $G=C_3\times C_3=\langle x\rangle\times \langle y\rangle$, then $\Aut(G) = {\rm GL}(2, 3):= \Gamma$. Note that $\Gamma$ is of order $48$ and it has a unique irreducible character $\chi$ of degree 4. We show how it is constructed using the above proof. By Theorem \ref{thm:cpcpfinal2}, the simple functor $S_{C_3\times C_3, 1}$ is a composition factor and the number of non-trivial direct summands of $\mathcal{K}\kerlin(G)$ is $|\Aut(C_3)|-1 = 1$. This summand is induced from a linear character of the Borel subgroup.
    Now, the Borel subgroup $B$ of $\Gamma$ is of order 12 and it is generated by the following 3 matrices.
    \[
    \alpha = \begin{pmatrix}
        1&1\\0&1
    \end{pmatrix},\, \beta=\begin{pmatrix}
        2&0\\0&1
    \end{pmatrix},\, \gamma=\begin{pmatrix}
        1&0\\0&2
    \end{pmatrix}
    \]
    This subgroup is isomorphic to the Dihedral group of order 12. 
    We let $\phi$ be the unique linear character of $B$ such that $\phi(\beta) = -1$ and $\phi(\gamma) = 1$. Then 
    \[
    \chi = \ind_B^\Gamma\phi
    \]
    is irreducible of degree 4 and hence the simple functor $S_{C_3\times C_3, \chi}$ is a composition factor of $\kerlin$. 

    Note that when $p>3$, the group $\hat\Gamma_x$ has $p-2>1$ non-trivial irreducible characters and for each of them, we will have an irreducible character of $\Gamma$ induced from $B$. 
    \item Finally, let $G= C_9\times C_3$. We choose $s=2$ and hence $t=5$ and $\omega = 2$. Thus the automorphism group $\Gamma$ can be presented as follows.
    \begin{eqnarray*}
    \Gamma = &\langle a, b, c, d\mid a^6 = b^3 = c^3 = d^2 =1, b^a = b^5, b^d =b^2, \\ & c^a=c^2, c^d = c^5, a^d = a, cb = a^4bc \rangle. 
    \end{eqnarray*}
    We include the part of the character table of $\Gamma$ containing degree $3$ characters. See \cite{Dokchitser} for the complete character table.
\end{itemize}
   
$$
\begin{array}{c|crrrcccccccccc}
  \rm class&\rm1& a^3d&a^3& d&a^2&a^4&\rm3*&ad&a^5d&\rm6*&\rm6G&\rm6H&\rm6I&\rm6J\cr
  \rm size&1&1&9&9&1&1&6&1&1&6&9&9&9&9\cr
\hline
\rho_{13}&3&3&-1&-1&3\zeta_3&3\zeta_3^2&0&3\zeta_3^2&3\zeta_3&0&\zeta_6^5&\zeta_6&\zeta_6&\zeta_6^5\cr
\rho_{14}&3&3&-1&-1&3\zeta_3^2&3\zeta_3&0&3\zeta_3&3\zeta_3^2&0&\zeta_6&\zeta_6^5&\zeta_6^5&\zeta_6\cr
\rho_{15}&3&-3&1&-1&3\zeta_3&3\zeta_3^2&0&-3\zeta_3^2&-3\zeta_3&0&\zeta_6^5&\zeta_6&\zeta_3^2&\zeta_3\cr
\rho_{16}&3&-3&-1&1&3\zeta_3^2&3\zeta_3&0&-3\zeta_3&-3\zeta_3^2&0&\zeta_3^2&\zeta_3&\zeta_6^5&\zeta_6\cr
\rho_{17}&3&-3&-1&1&3\zeta_3&3\zeta_3^2&0&-3\zeta_3^2&-3\zeta_3&0&\zeta_3&\zeta_3^2&\zeta_6&\zeta_6^5\cr
\rho_{18}&3&3&1&1&3\zeta_3^2&3\zeta_3&0&3\zeta_3&3\zeta_3^2&0&\zeta_3^2&\zeta_3&\zeta_3&\zeta_3^2\cr
\rho_{19}&3&3&1&1&3\zeta_3&3\zeta_3^2&0&3\zeta_3^2&3\zeta_3&0&\zeta_3&\zeta_3^2&\zeta_3^2&\zeta_3\cr
\rho_{20}&3&-3&1&-1&3\zeta_3^2&3\zeta_3&0&-3\zeta_3&-3\zeta_3^2&0&\zeta_6&\zeta_6^5&\zeta_3&\zeta_3^2\cr
\end{array}
$$

Here we note that the classes $3*$ and $6*$ are representing the four conjugacy classes of elements of order $3$ and $6$, respectively. We exclude them to save some space. We also make changes to the names of some classes according to the above presentation. As usual, $\zeta_3$ (resp. $\zeta_6$) denotes a primitive third (resp. sixth) root of unity.

Arguing as in the proof of Lemma \ref{lem:irredps}, one can prove that if $V$ is a summand of $\mathcal{K}\kerlin(G)$, then its character $\chi_V$ satisfies $\chi_V(d) = 1.$ 
Hence we must have that
\[
\chi_{\mathcal{K}\kerlin(G)} = \chi_{16} + \chi_{17} + \chi_{18} + \chi_{19}.
\]
\end{nothing}

\section{A Result on Composition Factors}

We append this section to describe the theorem from \cite{BCK} which is used to conclude completeness of composition factors. Although the original statement is for modules over arbitrary Green biset functors, we state its special case for biset functors. It is included for self-completeness. 

\begin{theorem}
Let \( F \) be a biset functor over a field \( k \) and let \( G \) be a finite group. Suppose that \( \dim_k F(G) < \infty \), and that for every \( \mathcal E_G \)-submodule \( M \subseteq F(G) \), one has
\[
M = (M \cap \mathcal K_F(G)) + I_G M.
\]
Then, for every simple \( k[\out(G)]\)-module \( V \), 
the following numbers are equal:
\begin{enumerate}
    \item The multiplicity $[F : S_{G,V}]$ of $S_{G, V}$ as a composition factor of $F$.
    \item The multiplicity of $V$ as a composition factor of the $\mathcal E_G$-module $F(G)$.
    \item The multiplicity of $V$ as a composition factor of the $k[\out(G)]$-module $\mathcal KF(G)$.
\end{enumerate}
\end{theorem}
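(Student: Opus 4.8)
The plan is to prove the two equalities $(1)=(2)$ and $(2)=(3)$ separately. The first is a standard fact from the structure theory of biset functors, which I would simply quote; the second is where the hypothesis on $F$ actually enters, and it is the part I would carry out in detail. Before anything else I would record the formal features that drive the argument: $\mathcal E_G = I_G\oplus k[\out(G)]$ with $I_G$ a two-sided ideal and $\mathcal E_G/I_G\cong k[\out(G)]$, so that the simple $\mathcal E_G$-modules annihilated by $I_G$ are exactly the inflations $\tilde V$ of the simple $k[\out(G)]$-modules $V$. I would also note that $\mathcal K_F(G)$ is an $\mathcal E_G$-submodule of $F(G)$ on which $I_G$ acts as zero: this is immediate from the definition, since $F(\alpha)(x)=0$ for every morphism $\alpha\colon G\to H$ with $|H|<|G|$ forces $F(\gamma\circ\alpha)(x)=F(\gamma)(F(\alpha)(x))=0$, hence $I_G\cdot x=0$, and the same computation with $\alpha\circ\epsilon$ in place of $\gamma\circ\alpha$ gives $\mathcal E_G$-stability. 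Thus $\mathcal K_F(G)$ is a $k[\out(G)]$-module and every $\mathcal E_G$-composition factor of it is some $\tilde V$, so $(3)$ really does count multiplicities of simple $k[\out(G)]$-modules. Throughout, $\dim_k F(G)<\infty$ will be used to keep every length finite.

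For $(1)=(2)$ I would invoke the standard detection of simple biset functors with minimal group $G$ by the evaluation at $G$ (see Bouc \cite[Section~4.3]{Bouc}): $F\mapsto F(G)$ is exact, and in a composition series of the relevant finitely generated subfunctor of $F$ the factor $S_{H,W}$ contributes $S_{H,W}(G)$, which is $0$ when $|H|>|G|$, is $0$ when $|H|=|G|$ but $H\not\cong G$, equals $\tilde V$ when $(H,W)=(G,V)$, and — crucially — when $|H|<|G|$ is an $\mathcal E_G$-module none of whose composition factors is annihilated by $I_G$. Matching factors term by term then yields $[F:S_{G,V}]=[F(G):\tilde V]_{\mathcal E_G}$.

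The main step is $(2)=(3)$, which I would argue as follows. Take any composition series $0=M_0\subset M_1\subset\cdots\subset M_n=F(G)$ of $F(G)$ as an $\mathcal E_G$-module and intersect it with $\mathcal K_F(G)$, obtaining a chain $0=M_0\cap\mathcal K_F(G)\subseteq\cdots\subseteq M_n\cap\mathcal K_F(G)=\mathcal K_F(G)$. Each subquotient $(M_i\cap\mathcal K_F(G))/(M_{i-1}\cap\mathcal K_F(G))$ embeds into the simple module $M_i/M_{i-1}$, hence is either $0$ or all of it, and I would show that the alternative is decided by whether $I_G$ annihilates $M_i/M_{i-1}$. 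If it does not, then $M_i/M_{i-1}$, being simple, has $0$ as its only $I_G$-annihilated submodule, whereas the image of $M_i\cap\mathcal K_F(G)$ there is a quotient of the $I_G$-annihilated module $\mathcal K_F(G)$ and so is $I_G$-annihilated; hence that image is $0$ and the subquotient vanishes. If it does — say $M_i/M_{i-1}\cong\tilde V$ — then $I_GM_i\subseteq M_{i-1}$, and the hypothesis applied to the $\mathcal E_G$-submodule $M_i$ gives $M_i=(M_i\cap\mathcal K_F(G))+I_GM_i\subseteq(M_i\cap\mathcal K_F(G))+M_{i-1}$; thus $M_i=(M_i\cap\mathcal K_F(G))+M_{i-1}$, and the second isomorphism theorem identifies the subquotient with $M_i/M_{i-1}\cong\tilde V$. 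Deleting the zero steps then leaves a composition series of $\mathcal K_F(G)$ whose factors are precisely the $I_G$-annihilated factors — that is, the $\tilde V$'s — of the original series, counted with multiplicity; by Jordan–Hölder, $[\mathcal K_F(G):V]=[F(G):\tilde V]_{\mathcal E_G}$ for every simple $k[\out(G)]$-module $V$, which is $(2)=(3)$.

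I expect the only genuinely non-formal ingredient to be the one quoted for $(1)=(2)$: that a simple biset functor with minimal group of order $<|G|$ contributes no $I_G$-annihilated composition factor to its evaluation at $G$. The tempting shortcut — observing that $I_G$ acts surjectively on $S_{H,W}(G)$ when $|H|<|G|$ — is not enough, because a module $M$ with $I_GM=M$ may still have an $I_G$-annihilated composition factor, so one must genuinely use the structure of the modules $S_{H,W}(G)$. For $(2)=(3)$, by contrast, the role of the hypothesis is exactly to upgrade the automatic inequality $[\mathcal K_F(G):V]\le[F(G):\tilde V]$ (valid because $\mathcal K_F(G)$ is a submodule of $F(G)$) to an equality, by forcing every $\tilde V$-factor of $F(G)$ to appear inside $\mathcal K_F(G)$ at the corresponding step of a composition series.
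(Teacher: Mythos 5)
Your proof of the equivalence $(2)=(3)$ is essentially identical to the paper's: both intersect a composition series of $F(G)$ with $\mathcal K_F(G)$, use the second isomorphism theorem to embed the resulting subquotients into the $M_i/M_{i-1}$, and invoke the hypothesis $M_i=(M_i\cap\mathcal K_F(G))+I_GM_i$ together with $I_GM_i\subseteq M_{i-1}$ to rule out the degenerate case. Your version is a bit more explicit about the non-$I_G$-annihilated steps vanishing (and flags a genuine subtlety in the $(1)=(2)$ reduction that the paper simply quotes as well-known), but the core argument is the same.
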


\begin{proof} The equality of the first two numbers is well-known. We prove the equality of the last two numbers.
Since \( \dim_k F(G) < \infty \), there exists an \( \mathcal E_G \)-composition series
\[
0 = M_0 \subset M_1 \subset \cdots \subset M_{n-1} \subset M_n = F(G)
\]
of \( F(G) \). Set \( K := \mathcal KF(G) \), and consider the induced series
\[
0 = (M_0 \cap K) \subseteq (M_1 \cap K) \subseteq \cdots \subseteq (M_{n-1} \cap K) \subseteq (M_n \cap K) = K
\]
of \( \mathcal E_G \)-submodules of \( K \). Let \( V \) be a simple \( k[\out(G)]\)-module, and let \( i \in \{1, \ldots, n\} \). We claim that if \( M_i / M_{i-1} \cong V \), then \( (M_i \cap K)/(M_{i-1} \cap K) \cong V \). This implies that \( [F(G) : V] \leq [K : V] \). But clearly, \( [K : V] \leq [F(G) : V] \), and we obtain equality. 

To prove the claim, observe that
\[
\frac{M_i \cap K}{M_{i-1} \cap K} = \frac{M_i \cap K}{(M_i \cap K) \cap M_{i-1}} \cong \frac{(M_i \cap K) + M_{i-1}}{M_{i-1}} \subseteq \frac{M_i}{M_{i-1}} \cong V.
\]
It therefore suffices to show that the left-hand side is nonzero. Assume by contradiction that \( M_i \cap K = M_{i-1} \cap K \). Then,
\[
M_i = (M_i \cap K) + I_G M_i \subseteq (M_i \cap K) + M_{i-1} = M_{i-1},
\]
contradicting \( M_i / M_{i-1} \cong V \neq 0 \). Here, we used the assumption that \( V \) is annihilated by \( I_G \), and the hypothesis of the theorem.
\end{proof}

\end{document}